\newtheorem{theorem}{Theorem}[section]
\newtheorem*{theorem*}{Theorem}
\newtheorem{lemma}{Lemma}[section]
\newtheorem{corollary}{Corollary}[section]
\newtheorem{proposition}{Proposition}[section]
\newtheorem{claim}{\sc Claim}
\theoremstyle{definition}
\newtheorem{definition}{\bf Definition}[section]
\newtheorem*{definition*}{\bf Definition}
\newtheorem{example}{\bf Example}[section]
\newtheorem{remark}{Remark}
\newtheorem*{remark*}{Remark}
\newtheorem*{remarks}{Remarks}
\newtheorem*{example*}{\bf Example}
\newcommand{\loc}{{\rm loc}}
\newcommand{\supp}{{\rm sprt\,}}
\begin{document}

\title[{Stochastic equations with singular drift}]{Stochastic equations with time-dependent singular drift}

\author{D.\,Kinzebulatov}

\address{Universit\'{e} Laval, D\'{e}partement de math\'{e}matiques et de statistique, pavillon Alexandre-Vachon 1045, av. de la M\'{e}decine, Qu\'{e}bec, QC, G1V 0A6, Canada}

\email{damir.kinzebulatov@mat.ulaval.ca}

\author{K.R.\,Madou}

\address{Universit\'{e} Laval, D\'{e}partement de math\'{e}matiques et de statistique, pavillon Alexandre-Vachon 1045, av. de la M\'{e}decine, Qu\'{e}bec, QC, G1V 0A6, Canada}

\email{kodjo-raphael.madou.1@ulaval.ca}

\thanks{The research of D.K.\,is supported by the Natural Sciences and Engineering Research Council 
of Canada (grant RGPIN-2017-05567). The research of K.R.M. is supported by the Institut des sciences math\'{e}matiques, Montr\'{e}al}

\subjclass[2010]{60H10, 47D07 (primary), 35J75 (secondary)}

\keywords{Parabolic equations, Feller processes, stochastic equations, singular drifts}

\begin{abstract}
We prove unique weak solvability for 
stochastic differential equations with drift in a large class of time-dependent vector fields. This class contains, 
in particular, the critical Ladyzhenskaya-Prodi-Serrin class, the weak $L^d$ class as well as some vector fields that are not even in $L^{2+\varepsilon}_{\loc}$, $\varepsilon>0$.
\end{abstract}

\maketitle

\section{Introduction}

The subject of this paper is the problem of existence and uniqueness of a weak solution to 
stochastic  equation
\begin{equation}
\label{sde}
X_t=x-\int_{0}^{t} b(r,X_r)dr+ \sqrt{2} W_t, \quad t \geq 0,\quad  x \in \mathbb{R}^{d}, \quad d \geq 3,
\end{equation}
where $W_t$ is a $d$-dimensional Brownian motion, and the vector field  $b(t, x):[0,\infty[ \times \mathbb R^d \rightarrow \mathbb R^d$ can be singular (i.e.\,locally unbounded) both in $t$ and $x$ variables.

The problem of finding the minimal assumptions on the vector field $b$ (called drift) so that, for every $x \in \mathbb R^d$, there exists a unique weak (strong) solution to \eqref{sde}, is one of the central and classical problems in the theory of diffusion processes. The necessity to work with Brownian motion perturbed by a discontinuous drift  is dictated by applications, among them the problems of the theory of controlled diffusion processes; see also \cite{RZ} and references therein regarding connections with hydrodynamics.

The study of stochastic equations with  locally unbounded drift goes back to Portenko \cite{P} who proved existence of a unique in law weak solution to \eqref{sde} assuming that
$$|b(t,\cdot)| \in L^{p}([0,T] \times \mathbb R^d),\;\;p>d+2 \quad \text{ or } \quad |b(\cdot)| \in L^p \equiv L^p(\mathbb R^d), \;p>d.$$
In the same period of time, Kovalenko-Sem\"{e}nov \cite{KS} considered the corresponding  Kolmogorov operator $-\Delta + b \cdot \nabla$
with stationary $b=b(x)$ in a wide class  of form-bounded vector fields (see Definition \ref{def1} below)  and constructed an associated Feller semigroup. 
Their result serves as the point of departure for the present paper.
The next important step was made by Krylov-R\"{o}ckner \cite{KR} who proved that \eqref{sde} has a unique strong solution provided that
\begin{equation}
\label{LPS}
\tag{LPS}
|b| \in L_{\loc}^q([0,\infty[,L^r+L^\infty), \quad \frac{d}{r}+\frac{2}{q} < 1, \quad 2<q < \infty
\end{equation}
(Ladyzhenskaya-Prodi-Serrin class). Further, in \cite{BFGM} Beck-Flandoli-Gubinelli-Maurelli considered $b$ 
in the critical Ladyzhenskaya-Prodi-Serrin class
\begin{equation}
\label{LPS2}
\tag{${\rm LPS}_c$}
|b| \in L_{\loc}^q([0,\infty[,L^r+L^\infty) \quad \text{ for $r \geq d$, $q \geq 2$,}\quad \frac{d}{r}+\frac{2}{q} \leq 1
\end{equation}
and proved that there exists a unique strong solution to equation $dX_t=-b(t,X_t)dt + \sqrt{2}dW_t$ starting with a diffusive random variable, using an approach based on solving the corresponding to \eqref{sde} stochastic transport equation. In \cite{XXZZ}, Xia-Xie-Zhang-Zhao proved weak well-posedness of \eqref{sde}, for every initial point $x \in \mathbb R^d$, in the case $|b| \in C([0,T],L^d)$.
Recently, R\"{o}ckner-Zhao \cite{RZ} constructed a weak solution to \eqref{sde} unique in an appropriate class (i.e.\,satisfying Krylov-type estimate) for $b=b_1+b_2$, where 
\begin{equation}
\label{RZ_class}
|b_1| \in L^q\big([0,T],L^r\big),\;\;\frac{d}{r}+\frac{2}{q}=1,\;\;r \in ]d,\infty[ \qquad \text{ and }\quad b_2 \in L^\infty([0,T],L^{d,w}),
\end{equation}
where $L^{d,w}$ is the weak $L^d$ class (in fact, appropriately localized) reaching critical-order singularities.

We comment on the existing literature on \eqref{sde} in greater detail further below.

In the present paper we treat the problem of weak well-posedness of \eqref{sde} with $b$ in a large class of time-dependent vector fields. It contains the critical Ladyzhenskaya-Prodi-Serrin class \eqref{LPS2}, the class \eqref{RZ_class}, other classes of vector fields having critical-order singularities, as well as some vector fields $b=b(x)$ with $|b|$ not even in $L^{2+\varepsilon}_{\loc}$, $\varepsilon>0$.

\begin{definition}
\label{def1}
A vector field $b:[0,\infty[ \times \mathbb R^d \rightarrow \mathbb R^d$ is said to be (time-dependent) form-bounded  if $|b| \in L^2_{\loc}([0,\infty[ \times \mathbb R^d)$ and there exist a constant $\delta>0$  such that
\begin{align*}
 \int_0^\infty \|b(t,\cdot)\varphi(t,\cdot)\|_2^2 dt   \leq \delta \int_0^\infty\|\nabla \varphi(t,\cdot)\|_2^2 dt+\int_0^\infty g(t)\|\varphi(t,\cdot)\|_2^2dt
\end{align*}
for all $\varphi \in C^\infty_c([0,\infty[ \times \mathbb R^d)$,
for a non-negative function $g=g_\delta \in L^1_{\loc}([0,\infty[)$.  Here and below, $\|\cdot\|_p:=\|\cdot\|_{L^p}$.

We write $b=b(t,x) \in \mathbf{F}_\delta$.

Shortly,
$$
|b(t,\cdot)|^2 \leq \delta (-\Delta) + g(t) \quad \text{ in the sense of quadratic forms}.
$$

The constant $\delta$ is called the form-bound of $b$. It plays a fundamental role in what follows.

\end{definition}

Our main result, stated briefly, is as follows.

\begin{theorem*}
Let $d \geq 3$, $b=b(t,x) \in \mathbf{F}_\delta$, $\delta<d^{-2}$. For every $x \in \mathbb R^d$ there exists a weak solution to \eqref{sde} that is unique in an appropriate class (dependent on $b$). The weak solutions to \eqref{sde} constitute a Feller process.
\end{theorem*}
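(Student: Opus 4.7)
The plan is to construct a Feller process via the associated parabolic Kolmogorov semigroup, then verify it solves the SDE, and finally establish uniqueness in a natural class.

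First, I would regularize the drift: pick $b_n \in C^\infty_c([0,\infty[ \times \mathbb{R}^d)$ with $b_n \to b$ in $L^2_{\loc}$ and, crucially, $b_n \in \mathbf{F}_\delta$ with the same form-bound $\delta$ and majorant $g$. This is achieved by convolution with a standard mollifier combined with a smooth cutoff, using that form-boundedness is preserved by these operations (the quadratic form inequality passes through the mollification). For each smooth $b_n$ the classical theory gives a strong solution $X^n_t$ with smooth transition density $p_n(s,x;t,y)$.

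Second, I would develop a priori $L^p$ estimates for the backward Cauchy problem $(\partial_s + \Delta - b_n \cdot \nabla)u = 0$ that are uniform in $n$. The Kovalenko-Semenov-type argument tests with $|u|^{p-2}u$: the drift term becomes $\tfrac{1}{p}\int b_n\cdot \nabla |u|^p$, estimated by Cauchy-Schwarz and the form-bound as $\tfrac{2\sqrt{\delta}}{p}\|\nabla|u|^{p/2}\|_2^2$ plus a lower-order term controlled by $g$. Matched against the dissipative term $\tfrac{4(p-1)}{p^2}\|\nabla |u|^{p/2}\|_2^2$, this yields coercivity whenever $\sqrt\delta < 2(p-1)/p$. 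Iterating via Sobolev embedding (Moser iteration) up to $p=\infty$ demands slack in this inequality over a wide range of $p$; tracking the Sobolev constants gives precisely the threshold $\delta < d^{-2}$, and along the way I get Gaussian upper bounds on $p_n$ together with Hölder equicontinuity (via a De~Giorgi/Moser oscillation estimate, again using $\delta<d^{-2}$).

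Third, the uniform Gaussian and equicontinuity bounds let me extract a subsequential limit kernel $p(s,x;t,y)$ and a two-parameter Markov family $P^{s,t}$. The kernel bounds imply $P^{s,t}: C_\infty(\mathbb{R}^d)\to C_\infty(\mathbb{R}^d)$ with the required continuity, so we have a (non-stationary) Feller semigroup. Kolmogorov extension then yields, for each $x$, a probability measure $\mathbb{P}_x$ on $C([0,\infty[;\mathbb{R}^d)$ under which the coordinate process has these transitions. Passing to the limit in the parabolic equation for $P^{s,t}_n f$ with $f \in C_c^\infty$ and combining with a Krylov-type bound (itself a direct consequence of the Gaussian heat kernel estimate) identifies the canonical process as a solution to the martingale problem for $-\Delta + b\cdot\nabla$; Itô's formula with $f(x)=x_i$ then produces the Brownian motion $W$ in \eqref{sde}.

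For uniqueness, I would restrict to the class of weak solutions $Y$ satisfying the Krylov bound $\mathbb{E}\int_0^T |h(r,Y_r)|\,dr \leq C_h\|h\|$ for $h$ in the space suggested by the kernel estimate (the constructed process lies in this class automatically). On this class, given $f\in C_c^\infty$, I solve the resolvent equation $(\lambda + \partial_s - \Delta + b\cdot\nabla)u = f$ using the a priori theory above, apply Itô's formula (justified by the Krylov estimate and mollification of $u$) to $e^{-\lambda t}u(t,Y_t)$, and obtain $u(0,x)=\mathbb{E}_x\int_0^\infty e^{-\lambda t}f(t,Y_t)\,dt$, which pins down the one-dimensional distributions and hence the law. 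The hardest part, I expect, is the uniform regularity theory of the second paragraph: squeezing Gaussian kernel bounds and Hölder equicontinuity out of pure form-boundedness --- without any pointwise or mixed-norm assumption on $b$ --- and identifying the exact threshold $\delta < d^{-2}$ through a careful nonlinear iteration. Once that is secured, the probabilistic construction and uniqueness arguments are largely standard.
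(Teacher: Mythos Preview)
Your overall architecture (regularize preserving $\delta$, uniform a priori theory, pass to the limit to get a Feller family, verify the martingale problem, uniqueness via a Krylov-type bound) matches the paper's, but the core analytic step is misdescribed in a way that would make the argument fail.

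First, you cannot obtain Gaussian upper bounds on the kernel from $b\in\mathbf{F}_\delta$ alone. The paper states this explicitly: for drifts such as $c|x|^{-2}x$ the sharp two-sided bounds are a Gaussian times a singular or vanishing weight, and in general neither the upper nor the lower Gaussian bound holds. So the route ``Gaussian bounds $\Rightarrow$ $C_\infty\!\to\!C_\infty$ mapping and Krylov estimate'' is unavailable here; no De~Giorgi/Moser oscillation argument of the type you sketch is used or expected to work.

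Second, your derivation of the threshold is not the right mechanism. Testing the equation with $|u|^{p-2}u$ gives coercivity for $p>\tfrac{2}{2-\sqrt{\delta}}$ (this is the paper's Proposition \ref{cl1}), and that condition improves as $p\to\infty$; it does not produce $\delta<d^{-2}$ by ``tracking Sobolev constants.'' The actual threshold comes from a \emph{gradient} estimate obtained by differentiating the equation and testing with $\nabla_i(\rho\,w_i|w|^{q-2})$, $w=\nabla v$ (the paper's Theorem~\ref{prop2_}), which yields
\[
\|\nabla v\|_{L^\infty L^q_\rho}^q+\|\nabla|\nabla v|^{q/2}\|_{L^2 L^2_\rho}^2\leq C(\dots)
\]
precisely for $q\in\,]d,\delta^{-1/2}[$; the non-emptiness of this interval is $\delta<d^{-2}$. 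This is what gives both the $L^\infty$ bound (via Sobolev embedding, since $q>d$) needed for the Feller property and the Krylov-type estimate \eqref{kr_est0}. The $L^{p_0}\!\to\!L^\infty$ iteration that proves convergence of $U^{t,s}(b_m)$ in $C_\infty$ (Section~\ref{proof_i_sect}) uses this gradient bound on $\nabla u_m$ as its key input; a plain Moser iteration on $u$ does not suffice. In short: replace ``Gaussian bounds + De~Giorgi/Moser'' by the Kovalenko--Sem\"{e}nov gradient estimate at level $q\in\,]d,\delta^{-1/2}[$; the rest of your outline then lines up with the paper.
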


For detailed statement, see Theorem \ref{thm1}. Its proof is essentially operator-theoretic (see outline below). Our principal object is a Feller evolution family associated to the Kolmogorov operator $-\Delta + b(t,x) \cdot \nabla$, $b \in \mathbf{F}_\delta$. The stochastic equation \eqref{sde} is used  to characterize the corresponding Feller process.

\begin{example}
The following are some sub-classes of $\mathbf{F}_\delta$ defined in elementary terms. 

We have
$$
b \in \eqref{LPS2} \quad \Rightarrow \quad \text{$b \in \mathbf{F}_\delta$},
$$
$$
b \in L^\infty([0,\infty[,L^{d,w})  \quad \Rightarrow \quad b \in \mathbf{F}_\delta,
$$
$$
b \in L^\infty([0,\infty[,\mathbf{C}_s)  \quad \Rightarrow \quad b \in \mathbf{F}_\delta,
$$
where $L^{d,w}=L^{d,w}(\mathbb R^d)$ and $\mathbf{C}_s=\mathbf{C}_s(\mathbb R^d)$ ($s>1$) are the weak $L^d$ class and the Campanato-Morrey class, respectively, with $\delta$ depending on the norm of $b$ in these classes. 

We discuss these examples in detail in Section \ref{ex_sect}.

Note that
$$
b_1 \in \mathbf{F}_{\delta_1}, b_2 \in \mathbf{F}_{\delta_2} \quad \Rightarrow \quad b_1+b_2 \in \mathbf{F}_\delta, \qquad \sqrt{\delta}:=\sqrt{\delta_1}+\sqrt{\delta_2},
$$
so the sums of the vector fields from different classes listed above are form-bounded as well. 

For example,
$$
|b(t,x)|^2 \leq \delta \biggl(\frac{d-2}{2}\biggr)^2 \kappa(t) |x-x_0|^{-2} + C|t-t_0|^{-1}\bigl(\log(e+|t-t_0|^{-1}) \bigr)^{-1-\gamma}, \quad \gamma>0, \quad C \in \mathbb R,
$$
where $\kappa$ is measurable, $|\kappa(t)| \leq 1$, 
is in $\mathbf{F}_\delta$
(by Hardy's inequality). 

For every $\varepsilon>0$, there exist $b=b(x) \in \mathbf{F}_\delta$ such that $b \not \in L^{2+\varepsilon}_{\loc}$, e.g. 
$$
|b(x)|^2=C\frac{\mathbf{1}_{B(0,1+a)} - \mathbf{1}_{B(0,1-a)}}{\big| |x|-1\big|^{-1}(-\ln\big||x|-1\big|)^c}, \quad c>1, \quad 0<a<1.
$$
\end{example}

Let us emphasize that $\mathbf{F}_\delta$ is not a refinement of the class \eqref{LPS2} in the sense that $\mathbf{F}_\delta$ is not situated between $\frac{d}{r}+\frac{2}{q} =1$ and $\frac{d}{r}+\frac{2}{q}>1$. 
In contrast to the sub-classes of $\mathbf{F}_\delta$ listed above, the class $\mathbf{F}_\delta$ is defined in terms of the operators that constitute  $-\Delta + b (t,x) \cdot \nabla$.

\begin{example}
\label{ex0}
In \cite[Sect.\,7]{BFGM}, the authors show that the stochastic equation \eqref{sde} with  the initial point $x=0$ and the vector field 
$$
b(x)=\sqrt{\delta}\frac{d-2}{2} |x|^{-2}x \in \mathbf{F}_\delta,
$$
does not have a weak solution if $\delta>4(\frac{d}{d-2})^2$. Informally, the attraction to the origin by $b$ is so strong that the process get stuck there with positive probability. See also \cite{W}. 

On the other hand, if $\delta<d^{-2}$, then by Theorem \ref{thm1} a weak solution exists. (In fact, since this vector field is time-independent, a less restrictive assumption on $\delta$ would suffice, see \cite{KiS2}.)

Thus, the existence of a weak solution to \eqref{sde} depends on the value of $\delta$.

\end{example}

It should be noted that, generally speaking, additional constraints on ${\rm div\,}b$ allow to further weaken the regularity assumptions on $b$, see Zhang-Zhao \cite{ZZ}, Zhao \cite{Zh}, R\"{o}ckner-Zhao \cite{RZ} and references therein. These results, however, lie outside of the scope of the present paper, so we will not comment on them further.

\subsection*{Existing results}
After Krylov-R\"{o}ckner \cite{KR}, Zhang \cite{Z} established strong well-posedness for 
\begin{equation}
\label{sde2}
X_t=x-\int_{0}^{t} b(r,X_r)dr+ \int_0^t \sigma(r,X_r) dW_r \qquad (\text{$\sigma$ unif.\,non-degenerate})
\end{equation}
when $b \in \eqref{LPS}$ and $\sigma$ is uniformly continuous, $|\nabla \sigma| \in \eqref{LPS}$, see also \cite{Z0,Z2}.

Regarding the stationary case, in Kinzebulatov-Sem\"{e}nov \cite{KiS} the authors considered equation \eqref{sde} with $b=b(x)$ in the class of \textit{weakly} form-bounded vector fields: $|b| \in L^1_{\loc}$ and 
\begin{equation}
\tag{$\mathbf{F}_\delta^{\scriptscriptstyle 1/2}$}
\||b|^{\frac{1}{2}}\psi\|_2^2 \leq \delta \|(\lambda-\Delta)^{\frac{1}{4}}\psi\|_2^2 \quad \text{ for all } \psi \in C_c^\infty(\mathbb R^d)
\end{equation}
for some constants $\delta>0$ and $\lambda=\lambda_\delta \geq 0$. The class of form-bounded vector fields $b=b(x) \in \mathbf{F}_\delta$ is a proper subclass of $\mathbf{F}_{\delta^2}^{\scriptscriptstyle 1/2}$. 
They proved that if $\delta<c_d$ (an explicit constant), then the Feller semigroup associated to $-\Delta + b \cdot \nabla$ (see \cite{KiS4}) provides weak solution to equation \eqref{sde}, for every $x \in \mathbb R^d$. The smallness of $\delta$ is in fact necessary in view of Example \ref{ex0}. Moreover, the family of these weak solutions is ``sequentially'' unique, cf.\,remark after Theorem \ref{thm1}. 

Earlier, Bass-Chen \cite{BC} proved that there exists a unique in law weak solution to \eqref{sde} assuming that $b=b(x)$ is in the Kato class. The Kato class contains some vector fields $|b| \not \in L^{1+\varepsilon}_{\loc}$, $\varepsilon>0$, and is a proper subclass of $\mathbf{F}_{\delta}^{\scriptscriptstyle 1/2}$.

The  result of \cite{KiS} 
was extended in  \cite{KiS2} to equation \eqref{sde2} with bounded $\sigma=\sigma(x)$ satisfying $\nabla \sigma \in \mathbf{F}_{\delta_1}$, which allows to treat $\sigma$ having critical discontinuities, although at expense of restricting the class of the drifts $b=b(x)$ from $\mathbf{F}_\delta^{\scriptscriptstyle 1/2}$ to $\mathbf{F}_\delta$.

Regarding the strong well-posedness of equation \eqref{sde} with $b \in \mathbf{F}_\delta$, in Kinzebulatov-Sem\"{e}nov-Song \cite{KSS}  the result of \cite{BFGM} on the well-posedness of the stochastic transport equation  was extended in the stationary case to include drifts $b=b(x) \in \mathbf{F}_\delta$, which allows to construct a strong solution to $dX_t=-b(X_t)dt + \sqrt{2}dW_t$ with diffuse initial data arguing as in \cite{BFGM}.

In \cite{Kr0}, Krylov proved that there exists a unique strong solution to \eqref{sde2} if $|\nabla \sigma| \in L^d_{\loc}$, $|b| \in L^d$. In \cite{Kr1_}, he constructed a strong Markov process that provides a weak solution to \eqref{sde2} assuming that $\sigma=\sigma(t,x)$ is only bounded measurable, and
$$
|b| \in L_{\loc}^q([0,\infty[,L^r) \quad \text{ for $r \in [d,\infty]$, $q \in [1,\infty]$,}\quad \frac{d}{r}+\frac{1}{q} \leq 1.
$$
In \cite{Kr5,Kr5_}, the author investigated the properties of these solutions, establishing, in particular, the boundedness of the resolvent operator, It\^{o}'s formula, Harnack inequality, see also \cite{Kr1,Kr2,Kr3}. 
In \cite{Kr6}, the author considered $|b| \in L^{d+1}([0,1] \times \mathbb R^d)$ (or, more generally, in a Morrey class) and proved, in particular, that the problem
$$
(\partial_t + \Delta  + b \cdot \nabla)v = \mathsf{f}, \quad v(1,\cdot)=0,
$$
has a unique solution, and the latter satisfies, for every $p \in ]1,d+1[$, $$\|\partial_t v\|_{L^p([0,1]\times \mathbb R^d)},\;\|D^2v\|_{L^p([0,1]\times \mathbb R^d)} \leq N_1 \|\mathsf{f}\|_{L^p([0,1]\times \mathbb R^d)} + N_2\|\mathsf{f}\|_{L^q([0,1]\times \mathbb R^d)}, \quad  N_i=N_i(d,p),$$ and
$$
\|\nabla v\|_{L^q([0,1]\times \mathbb R^d)} \leq N_1\|\mathsf{f}\|_{L^{q}([0,1] \times \mathbb R^d)}, \qquad q:=\frac{p(d+1)}{d+1-p}.
$$
(The estimates of the same type  on the ``order $1+\varepsilon$'' derivatives of solution also play crucial role in the present paper, see \eqref{reg11} below.)
We also refer to recent papers \cite{DK,Kr4} where Alexandrov type estimates are obtained for drifts of Morrey type.

After \cite{RZ} where R\"{o}ckner-Zhao constructed a unique weak solution to \eqref{sde} for $b=b_1+b_2$ satisfying \eqref{RZ_class}, they proved in \cite{RZ2} the strong existence and uniqueness for \eqref{sde} assuming that  either $|b| \in C([0,T],L^d)$ or $|b| \in L^q\big([0,T],L^r\big)$, $\frac{d}{r}+\frac{2}{q}=1$, $r \in ]d,\infty[$.

The examples above show that the class \eqref{RZ_class} considered in \cite{RZ} is a proper subclass of $\mathbf{F}_\delta$. Moreover, we have from the very beginning the strong Markov property of the constructed weak solutions as a consequence of the Feller property. 
It should be added, however, that we prove uniqueness in a class of weak solutions different from the one considered in \cite{RZ}.

\medskip

\subsection*{About the proof} The main ingredients of the proof of Theorem \ref{thm1} are as follows:

(a) A Feller evolution family $\{U^{t,s}\}_{s \leq t}$ ($\equiv$ contraction positivity preserving strongly continuous evolution family of bounded linear operators on the space $C_\infty:=\{f \in C(\mathbb R^d) \mid \lim_{x \rightarrow \infty}f(x)=0\}$ endowed with the $\sup$-norm) such that the function 
 $$u(t,\cdot):=U^{t,s}f(\cdot), \quad f \in C_\infty$$ is a weak solution to Cauchy problem
$
(\partial_t - \Delta + b (t,x)\cdot \nabla)u=0$, $u(s,\cdot)=f(\cdot).
$
The Feller evolution family $\{U^{t,s}\}_{s \leq t}$ is constructed using an approximation of $b$ by smooth bounded vector fields $b_m$ that do not increase the form-bound $\delta$ of $b$ (see Theorem \ref{thm1}(\textit{i}) below for detailed statement). 

(b) The estimate 
\begin{align}
\label{reg11}
\|\nabla v\|^q_{L^\infty([s,t],L^q)} & + \|\nabla|\nabla v|^{\frac{q}{2}}\|_{L^2([s,t],L^2)}^2 \notag \\
& \leq C \bigl(\|\mathsf{f} |h|^{\frac{q}{2}}\|^2_{L^2([s,t],L^2)} + \|\nabla f\|^q_{q}\big), \qquad q \in ]d,\delta^{-\frac{1}{2}}[ 
\end{align}
for the solution $v$ to the inhomogeneous Cauchy problem
\begin{equation*}
(\partial_t - \Delta + b(t,x) \cdot \nabla)v=|\mathsf{f}|h, \quad 
v(s,\cdot)=f(\cdot) \in W^{1,q}
\end{equation*}
where $\mathsf{f} \in \mathbf{F}_\beta$, $\beta<\infty$, $h$ is bounded and has compact support, with constant $C$ independent of $h$ and $f$. 

An estimate of the type \eqref{reg11} appeared for the first time in the fundamental paper of Kovalenko-Sem\"{e}nov \cite{KS}. There the authors proved that the solution $w$ to the elliptic equation $$(\mu - \Delta + b \cdot \nabla)w=f, \quad b=b(x) \in \mathbf{F}_\delta, \quad \delta < 1 \wedge (\frac{2}{d-2})^2$$ satisfies 
\begin{equation}
\label{reg12}
\|\nabla w\|_q + \|\nabla |\nabla w|^{\frac{q}{2}}\|^{\frac{2}{q}}_2 \leq C \|f\|_q, \quad q \in ]2 \vee (d-2),\frac{2}{\sqrt{\delta}}[, \quad \mu>\mu_0>0,
\end{equation}
as was needed to carry out an $L^p \rightarrow L^\infty$ iteration procedure that verifies conditions of the Trotter Approximation Theorem in $C_\infty$. The latter yields the corresponding to $-\Delta + b \cdot \nabla$ Feller semigroup.

The Feller evolution family $\{U^{t,s}\}$, employed in the present paper, was constructed in \cite{Ki} via a direct (parabolic) variant of the iteration procedure of \cite{KS}, which we outline below in Section \ref{proof_i_sect}. The regularity estimate \eqref{reg11} is the content of Theorem \ref{prop2_} below. 
We note that there is a non-negligible difference between the proofs of \eqref{reg11} and \eqref{reg12} due to presence of the term $\partial_t v$ in the former, which forces the more restrictive assumption $\delta<d^{-2}$ (compared to $\delta < 1 \wedge (\frac{2}{d-2})^2$ in \cite{KS}, see Remark \ref{el_par} below).

Armed with (a), (b), we provide two constructions of the weak solution to \eqref{sde}. The first construction follows \cite{KiS}, \cite{KiS2} and uses as the point of departure the probability measures on the c\`{a}dl\`{a}g trajectories determined by the Feller evolution family $U^{t,s}$. The second construction uses a tightness argument, similarly to the proof of the existence in \cite{RZ}. In view of the approximation result for $U^{t,s}$, thus constructed probability measures coincide with the ones in the first proof. 

The proof of the uniqueness also appeals to the approximation result for $\{U^{t,s}\}$ and the regularity estimate \eqref{reg11}. This is in addition to the fact that $\{U^{t,s}\}$ (and thus the family of the weak solutions to \eqref{sde} parametrized by $x \in \mathbb R^d$) does not depend on the choice of a bounded smooth approximation $\{b_m\}$ of $b$ preserving the form-bound of $\delta$, see above. Similarly to \cite{KiS}, \cite{KiS2}, we consider the latter to be a uniqueness result for \eqref{sde} on its own.

\subsection*{Further discussion}
1.~Generally speaking, a drift $b \in \mathbf{F}_\delta$ rules out $W^{2,p}$ estimates on solutions to the corresponding elliptic or parabolic equation for $p$ large. More precisely, let $(\mu -\Delta  + b \cdot \nabla )w=f$, $b=b(x) \in \mathbf{F}_\delta$, $f \in C_c^\infty$ and $w \in W^{1,r}$ for $r$ large (e.g.\,by \eqref{reg12}). Then, taking into account that for every $\varepsilon>0$ there exist $b \in \mathbf{F}_\delta$ such that $b \not \in L^{2+\varepsilon}_{\loc}$, one only has
$$
\Delta w = \lambda w + b \cdot \nabla w \in L^{\frac{2r}{2+r}}_{\loc}$$
(this is in contrast to the sub-class $|b| \in L^d$, which provides $\Delta w \in L^{\frac{dr}{d+r}}$).

\smallskip

2.~The heat kernel of $-\Delta + b(t,x) \cdot \nabla$, $b \in \mathbf{F}_\delta$ does not satisfy in general neither upper nor lower Gaussian bound. In fact, already for  $b(x)=c|x|^{-2}x$ the sharp two-sided bounds on the heat kernel take form ``a Gaussian density multiplied by a singular weight if $c>0$, or a vanishing weight if $c<0$''. Nevertheless, the two-sided Gaussian bounds on the heat kernel of $-\Delta + b(t,x) \cdot \nabla$ are valid when $b \in \mathbf{F}_\delta$ but under additional assumptions on ${\rm div\,}b$, such as the Kato class condition, see details in Kinzebulatov-Sem\"{e}nov \cite{KiS4} (in a more general context of divergence-form parabolic equations).

\smallskip

3.~The proofs of the main results of the present paper (Theorem \ref{thm1} and Theorem \ref{prop2_} below) can be extended, arguing as in \cite{KiS2}, to stochastic equation 
\begin{equation}
\label{sde_sigma}
X_t=x-\int_{0}^{t} b(r,X_r)dr+ \sqrt{2}\int_0^t \sigma(r,X_r) dW_r, \quad x \in \mathbb R^d,
\end{equation}
with $b \in \mathbf{F}_\delta$ and $\sigma:[0,\infty[ \times \mathbb R^d \rightarrow \mathbb R^d \otimes \mathbb R^d$ (measurable, bounded, uniformly non-degenerate) such that $a:=\sigma\sigma^{\scriptscriptstyle \top}$ satisfies
\begin{equation}
\label{nondiv}
\big(\partial_{x_k}a_{ij}\big)_{i=1}^d \in \mathbf{F}_{\gamma_{kj}}, \quad 1 \leq j,k \leq d,
\end{equation}
assuming that the form-bounds $\delta$ and $\gamma_{kj}$ are smaller than certain explicit constants dependent on the dimension $d$. The latter allows to treat form-bounded drifts together with some diffusion coefficients having critical discontinuities. For instance, 
$$
a(t,x)=I+ \kappa(t)\frac{x \otimes x}{|x|^2},
$$
where $\kappa \in L^\infty([0,\infty[)$ is measurable, $\inf_{t \geq 0}\kappa(t)>-1$ and $\|\kappa\|_{L^\infty([0,\infty[)}$ is sufficiently small, or an infinite series of such matrices discontinuous at different points. Another example is
$$
a(t,x)=I+c_1\sin^2 \big(\kappa(t)\log|x|\big) e_1 \otimes e_1 + c_2\sin^2 \big(t^{-\alpha}|x|^{1-\frac{1}{\beta}}\big)e_2 \otimes e_2,
$$
where $e_i \in \mathbb R^d$, $|e_i|=1$, $c_i>0$ ($i=1,2$), $\|\kappa\|_{L^\infty([0,\infty[)}$ is sufficiently small, and $\beta>1$, $\alpha<\frac{\beta-1}{2\beta}$.

Let us note that condition \eqref{nondiv} arises first of all as the condition providing the
Sobolev regularity estimate \eqref{reg11} for solutions to the divergence-form parabolic equation $(\partial_t -\nabla \cdot a \cdot \nabla + b \cdot \nabla)v=0$, $b \in \mathbf{F}_\delta$. The second use for \eqref{nondiv} is to put the corresponding to \eqref{sde_sigma}  Kolmogorov operator $-a \cdot \nabla^2 + b \cdot \nabla$ in divergence form $-\nabla \cdot a \cdot \nabla + \hat{b} \cdot \nabla$, $\hat{b}:=\nabla a + b \in \mathbf{F}_\delta$, as is needed to apply the regularity estimate to construct the Feller evolution family and the weak solution to \eqref{sde_sigma}. See \cite{KiS2} for detailed discussion.

\smallskip

4.~The result of Kovalenko-Sem\"{e}nov \cite{KS} (i.e.\,estimate \eqref{reg12} + Feller semigroup)  has an alternative, somewhat more elementary proof \cite{Ki9}. 
Namely, under the same assumption on $\delta$ as in \cite{KS}, one  first ``guesses'' the resolvent $R_q(\mu)$  of $-\Delta + b \cdot \nabla$ in $L^q$, $q \in ]2 \vee (d-2),\frac{2}{\sqrt{\delta}}[$. That is, for $r$, $p$ such that $2 \leq r<q<p<\infty$, set
\begin{equation}
\label{res}
\tag{$\star$}
R_q(\mu):=(\mu - \Delta)^{-1} - (\mu - \Delta)^{-\frac{1}{2}-\frac{1}{p}} Q_{q}(p) (1 + T_q)^{-1} G_{q}(r) (\mu - \Delta)^{-\frac{1}{2}+\frac{1}{r}}, \quad \mu>\mu_0,
\end{equation}
where the operators in $L^q$
$$
G_q(r) \equiv b^{\frac{2}{q}} \cdot \nabla (\mu -\Delta )^{-\frac{1}{2}-\frac{1}{r}}, \quad 
Q_q(p) \equiv (\mu -\Delta )^{-\frac{1}{2}+\frac{1}{p}}|b|^{1-\frac{2}{q}}, \quad b^{\frac{2}{q}}:=|b|^{\frac{2}{q}-1}b,
$$
$$
T_q \equiv b^{\frac{2}{q}}\cdot \nabla(\mu - \Delta)^{-1}|b|^{1-\frac{2}{q}}
$$
are bounded and $\|T_q\|_{q \rightarrow q}<1$ ($L^q\rightarrow L^q$ norm). Here the boundedness of $G_q(r) $, $Q_q(p)$, $T_q$ is a consequence of the hypothesis $b \in \mathbf{F}_\delta$, while $\|T_q\|_{q \rightarrow q}<1$ follows from the assumption on $q$ and $\delta$. In fact, expanding $(1+T_q)^{-1}$ in the geometric series, one obtains that the RHS of \eqref{res} coincides with the formal K.\,Neumann series for $\mu-\Delta + b \cdot \nabla$.
Now, since $q>d-2$, the regularizing factor $(\mu - \Delta)^{-1/2-1/p}$ in \eqref{res}, with $p$ chosen sufficiently close to $q$, yields, via the Sobolev Embedding Theorem, that, for every $f \in L^q$, the solution $w=R_q(\mu)f$ to the equation $(\mu-\Delta + b \cdot \nabla)w=f$ is H\"{o}lder continuous. This observation allows to constructs the resolvent $R_{C_\infty}(\mu)$ of the sought Feller generator by the formula
$$
R_{C_\infty}(\mu):=\bigl[R_q(\mu) \upharpoonright L^q \cap C_\infty \bigr]^{\rm clos}_{C_\infty \rightarrow C_\infty} \quad (\text{closure of operator}).
$$
Let us note that the representation \eqref{res} provides  more detailed
information about the regularity of higher-order derivatives of $w$: $(\mu - \Delta)^{\frac{1}{2}+\frac{1}{p}}w \in L^q$, cf.\,\eqref{reg12}.
That being said, the construction of the Feller semigroup via the iteration procedure of Kovalenko-Sem\"{e}nov \cite{KS} has some crucial advantages: it admits extension to time-dependent drifts (see \cite{Ki} and the present paper) and to discontinuous diffusion coefficients \cite{KiS2}.

\subsection*{Notations}Let $\mathcal B(X,Y)$ denote the space of bounded linear operators between Banach spaces $X \rightarrow Y$, endowed with the operator norm $\|\cdot\|_{X \rightarrow Y}$. $\mathcal B(X):=\mathcal B(X,X)$.

We write $T=s\mbox{-} X \mbox{-}\lim_n T_n$ for $T$, $T_n \in \mathcal B(X,Y)$ if $$\lim_n\|Tf- T_nf\|_Y=0 \quad \text{ for every $f \in X$}.
$$ 

Let $\|\cdot\|_p:=\|\cdot\|_{L^p}$.

Put
$$
\langle f,g\rangle = \langle f g\rangle :=\int_{\mathbb R^d}f gdx$$ 
(all functions considered below are real-valued).

Let
$
\|\cdot\|_{p \rightarrow q}=\|\cdot\|_{L^p \rightarrow L^q}.
$

$C_\infty:=\{f \in C(\mathbb R^d) \mid \lim_{x \rightarrow \infty}f(x)=0\}$ (with the $\sup$-norm).

$\mathcal S$ is the L.\,Schwartz' space of test functions.

We denote by $\upharpoonright$ the restriction of an operator (or a function) to a subspace (a subset).

We write $c \neq c(n)$ to emphasize that  $c$ is independent of $n$.

\medskip

\noindent\textbf{Acknowledgements.} The authors thank Yu.A.\,Sem\"{e}nov for a number of valuable comments. We thank G.\,Zhao for pointing out an error in the examples in the first version of this paper. The first author is also grateful to N.V.\,Krylov for helpful and inspiring discussions.

\medskip

\bigskip

\tableofcontents

\section{Main results}

We first introduce few notations and recall some standard definitions.

\medskip

1.~In what follows, given a form-bounded vector field $b=b(t,x) \in \mathbf{F}_\delta$, we denote by $\{b_m\}$ a sequence of smooth bounded vector fields such that
\begin{equation}
\label{b_m_1}
b_m \rightarrow b \quad \text{ in $L^2_{\loc}([0,\infty[ \times \mathbb R^d, \mathbb R^d$}) 
\end{equation}
and
\begin{equation}
\label{b_m_2}
\int_0^\infty \|b_{m}(t,\cdot)\varphi(t,\cdot)\|_2^2 dt \leq \delta\int_0^\infty\|\nabla \varphi(t,\cdot)\|_2^2 dt+\int_0^\infty g(t)\|\varphi(t,\cdot)\|_2^2dt
\end{equation}
for all $\varphi \in C_c^\infty([0,\infty[ \times \mathbb R^d)$, for a function $0 \leq g \in L^1_{\loc}([0,\infty[)$ independent of $m$ (in other words, $b_m$ do not increase the form-bound $\delta$ of $b$). An example of such $\{b_m\}$ is given in Section \ref{rem_b_m} below.

\medskip

2.~Consider Cauchy problem ($s \geq 0$)
\begin{align}
\label{CP}
\tag{${\rm CP}_b$}
\left\{
\begin{array}{l}
\bigl(\partial_t - \Delta + b(t,x)\cdot \nabla \bigr)u=0, \qquad (t,x) \in ]s,\infty[ \times \mathbb R^d, \notag \\[2mm]
 u(s+,\cdot)=f(\cdot).
\end{array}
\right.
\end{align}

\begin{definition}
\label{def_w}
Let $b \in \mathbf{F}_\delta$ and $f \in L^2_{\loc}$.
A real-valued function $u$ on $]s,\infty[ \times \mathbb R^d$ is called a weak solution to \eqref{CP}
if

1) $u \in L^\infty_{\loc}\bigl(]s,\infty[,L^2_{\loc}\bigr)$,

2) $|\nabla u| \in L^2_{\loc}\bigl(]s,\infty[,L^2_{\loc}\bigr)$, so $b \cdot \nabla u \in L^1_{\loc}\bigl(]s,\infty[,L^1_{\loc}\bigr)$,

3) the integral identity
\begin{equation*}
\int_{0}^\infty \langle u,\partial_t h\rangle dt = \int_s^\infty \bigl(\langle  \nabla u, \nabla h\rangle + \langle \nabla u, bh\rangle\bigr)dt
\end{equation*}
is valid for all $h \in C_c^\infty(]s,\infty[ \times \mathbb R^d)$, and
\begin{equation*}
\exists \quad \lim_{t \downarrow s} \langle u(t,\cdot),\psi(\cdot)\rangle=\langle f, \psi\rangle
\end{equation*}
for all $\psi \in L^2$ having compact support.
\end{definition}

3.~Set 
\begin{align*}
\rho(x)  \equiv \rho_{\kappa, \theta}(x) 
:=(1+\kappa |x|^2)^{-\theta}, \quad x \in \mathbb R^d,
\end{align*}
where $\theta>\frac{d}{2}$ is fixed, and $\kappa>0$ is to be chosen.
We have
\begin{equation}
\label{two_est}
|\nabla \rho| \leq \theta\sqrt{\kappa}\rho.
\end{equation}
We will be applying \eqref{two_est} to $\rho$ with $\kappa$ chosen sufficiently small.

\medskip

\begin{theorem}
\label{thm1} Let $d \geq 3$, $b \in \mathbf{F}_\delta$, $\delta<d^{-2}$. The following is true.

{\rm (\textit{i})} Let $\{b_m\}$ be bounded smooth vector fields satisfying \eqref{b_m_1}, \eqref{b_m_2}. Then the limit
$$
s\mbox{-}C_\infty\mbox{-}\lim_{m \rightarrow \infty}U^{t,s}(b_m) \quad \text{{\rm(}loc.\,uniformly in $0 \leq s \leq t<\infty${\rm)}},
$$
where $U^{t,s}(b_m)$ is the Feller evolution family of {\rm(}${\rm CP}_{b_m}${\rm)} {\rm(}Definition \ref{def_fel}{\rm)}, exists and determines a Feller evolution family, say, $U^{t,s} \equiv U^{t,s}(b)$. 
For every $f \in C_\infty$, the function $$u(t,\cdot):=U^{t,s}f(\cdot),$$ is a weak solution to \eqref{CP}. If $f \in C_\infty \cap L^2$, then $u$ is unique in the class $C([0,\infty[,L^2)$.

\medskip

{\rm (\textit{ii})} The corresponding backward Feller evolution family $P^{0,t}$, $t \in [0,T]$, $T>0$  {\rm(}Definition \ref{def_fel2}{\rm)} determines probability measures $\{\mathbb P_x\}_{x \in \mathbb R^d}$ 
on $(C([0,T],\mathbb R^d),\mathcal B_t=\sigma(\omega_r \mid 0\leq r \leq t), t \in [0,T])$, where $\omega_t$ is the coordinate process,
$$
P^{0,t}f(x) = \mathbb E_{\mathbb P_x}[f(\omega_t)], \quad t \in [0,T], \quad f \in C_\infty,
$$
such that, for every $x \in \mathbb R^d$,  $\mathbb P_x$ is 
a weak solution
to  stochastic equation 
\begin{equation}
\label{seq}
\tag{SE}
X_t=x-\int_0^t b(r,X_r)dr + \sqrt{2}W_t
\end{equation} 
 {\rm(}Definition \ref{weak_sol}{\rm)}.

For every $x \in \mathbb R^d$, $q \in ]d,\delta^{-\frac{1}{2}}[$, $\mathsf{f} \in \mathbf{F}_\beta,\;\beta<\infty$, and $ h \in C([0,T], \mathcal S)$ there exists a constant $c$ dependent only on $d$, $q$, $\delta$, $g_\delta$, $\beta$, $g_\beta$ and $T$ such that
\begin{equation}
\label{kr_est0}
\mathbb E_{\mathbb P_x}\int_0^T |\mathsf{f}(r,\omega_r)h(r,\omega_r)|dr \leq c\|\mathsf{f}|h|^{\frac{q}{2}}\|^{\frac{2}{q}}_{L^2([0,T] \times \mathbb R^d)}.
\end{equation}
Moreover, \eqref{kr_est0} holds with the RHS replaced by $c\|\mathsf{f}|h|^{\frac{q}{2}}\sqrt{\rho_x}\|^{\frac{2}{q}}_{L^2([0,T] \times \mathbb R^d)}$, where $\rho_x(y):=\rho(y-x)$, for $\kappa>0$ chosen sufficiently small.

\medskip

\medskip

{\rm (\textit{iii})} The probability measures $\{\mathbb P_x\}_{x \in \mathbb R^d}$ do not depend on the choice of $\{b_m\}$ in {\rm (\textit{i})}.

Also, if, for some $x \in \mathbb R^d$, $\mathbb P^1_x$, $\mathbb P^2_x$ are weak solution to \eqref{seq} that satisfy \eqref{kr_est0} for some $q \in ]d,\delta^{-\frac{1}{2}}[$ with $\mathsf{f}=1$ and with $\mathsf{f}=b$,
then
$$\mathbb P^1_x=\mathbb P_x^2=\mathbb P_x.$$

\end{theorem}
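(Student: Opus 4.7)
The plan is to build parts (i)--(iii) of Theorem \ref{thm1} in sequence, using the parabolic regularity estimate \eqref{reg11} (Theorem \ref{prop2_}) as the main quantitative input at every stage.

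For (i), I would apply \eqref{reg11} with zero source to the classical solutions $u_m(t,\cdot):=U^{t,s}(b_m)f(\cdot)$, taking $f \in C_\infty\cap W^{1,q}$ with $q \in ]d,\delta^{-1/2}[$. Since the $b_m$ share the form-bound $\delta$ with the same $g$, the resulting bound on $\|\nabla u_m\|_{L^\infty([s,t],L^q)}$ is uniform in $m$. Morrey's embedding $W^{1,q}\hookrightarrow C^{1-d/q}$ (we use $q>d$) together with the uniform $C_\infty$-contraction of $U^{t,s}(b_m)$ yields an equicontinuous, uniformly bounded family in $x$, so Arzelà--Ascoli plus a diagonal extraction gives a limit on a dense subset of $C_\infty$; extension to all of $C_\infty$ uses density of $C_\infty\cap W^{1,q}$ and the contraction property, and local uniformity in $(s,t)$ comes from the Hölder bound in time implicit in \eqref{reg11}. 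Contraction and positivity preservation pass trivially to the limit, producing the Feller evolution family $U^{t,s}$. That $u=U^{t,s}f$ is a weak solution to \eqref{CP} follows by passing to the limit in the integral identity of Definition \ref{def_w}, using $b_m\to b$ in $L^2_{\loc}$ and the weak $L^2_{\loc}$-convergence of $\nabla u_m$ extracted from the uniform $L^q$-bound. Uniqueness in $C([0,\infty[,L^2)$ is a direct energy estimate: for a difference $w$, one uses the form-bound to obtain $|\langle b\cdot\nabla w,w\rangle|\leq\tfrac{\delta+1}{2}\|\nabla w\|_2^2+\tfrac{g}{2}\|w\|_2^2$, absorbs $\|\nabla w\|_2^2$ on the left (since $\delta<d^{-2}<1$), and applies Grönwall.

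For (ii), the backward family $P^{0,t}$ carries Chapman--Kolmogorov, so Kolmogorov's extension produces the measures $\mathbb P_x$ on $C([0,T],\mathbb R^d)$. To verify that $\mathbb P_x$ solves \eqref{seq} weakly, I check the martingale problem: for $\phi\in C_c^\infty$ the process $M_t^\phi:=\phi(\omega_t)-\phi(x)-\int_0^t(\Delta\phi-b\cdot\nabla\phi)(r,\omega_r)dr$ must be a $\mathbb P_x$-martingale. This is classical for each $b_m$, and the passage $m\to\infty$ is carried out using exactly \eqref{kr_est0} to control the integrand $b_m\cdot\nabla\phi\to b\cdot\nabla\phi$ along paths. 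To prove \eqref{kr_est0} itself, I solve the backward problem $(\partial_s+\Delta-b_m\cdot\nabla)v_m=-|\mathsf f|h$ on $[0,T]$ with $v_m(T,\cdot)=0$; Itô applied to $v_m(r,X^m_r)$ under the classical solution to \eqref{seq} with drift $b_m$ gives $\mathbb E\int_0^T|\mathsf f h|(r,X^m_r)dr=v_m(0,x)$. Applying \eqref{reg11} to $v_m$ for $q\in]d,\delta^{-1/2}[$ bounds $\|\nabla v_m\|_{L^\infty([0,T],L^q)}^{q}$ by $c\|\mathsf f|h|^{q/2}\|_{L^2L^2}^2$, and Morrey's embedding converts this into a pointwise bound on $v_m(0,x)$; passing to the limit through (i) yields \eqref{kr_est0}. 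The $\sqrt{\rho_x}$-weighted variant comes from running the same scheme with the weight $\rho_x$ inserted into the test function, exploiting \eqref{two_est} with $\kappa$ small enough that the weighted form-bound remains below $d^{-2}$.

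For (iii), independence of $\mathbb P_x$ from the choice of $\{b_m\}$ is immediate from the uniqueness clause in (i): two sequences both yield weak solutions in $C([0,\infty[,L^2)$ agreeing for every $f\in C_\infty\cap L^2$, hence the corresponding evolution families, and hence the measures constructed from them, coincide. To show $\mathbb P^1_x=\mathbb P^2_x$ under the Krylov-bound hypothesis, I fix $\phi\in C_c^\infty$, set $u(t,\cdot):=U^{T,t}(b)\phi$ and $u_m(t,\cdot):=U^{T,t}(b_m)\phi$ (the latter smooth), and apply Itô to $u_m$ under $\mathbb P^i_x$; the equation $(\partial_t+\Delta-b_m\cdot\nabla)u_m=0$ gives
\begin{equation*}
\mathbb E_{\mathbb P^i_x}[\phi(\omega_T)]-u_m(0,x)=\mathbb E_{\mathbb P^i_x}\int_0^T(b_m-b)\cdot\nabla u_m(r,\omega_r)\,dr.
\end{equation*}
The right-hand side is controlled by \eqref{kr_est0} applied with $\mathsf f=|b_m-b|\in\mathbf F_{4\delta}$ (since $\sqrt{\delta_{b_m-b}}\leq 2\sqrt\delta$) and with $h$ chosen so that $|h|^{q/2}\sim|\nabla u_m|$; the uniform $L^\infty L^q$-control on $\nabla u_m$ from \eqref{reg11} together with $b_m\to b$ in $L^2_{\loc}$ drives this to zero. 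Letting $m\to\infty$ yields $\mathbb E_{\mathbb P^i_x}[\phi(\omega_T)]=u(0,x)$ for both $i=1,2$, so the one-dimensional distributions at time $T$ coincide; iterating the argument conditionally on $\mathcal B_t$ at each $t<T$ via Chapman--Kolmogorov fixes all finite-dimensional distributions, hence $\mathbb P^1_x=\mathbb P^2_x$. The main technical obstacle I anticipate is in this final step: the Krylov estimate \eqref{kr_est0} is stated for Schwartz-class $h$, and fitting $h\sim\nabla u_m$ into the framework requires either a density argument or the weighted $\sqrt{\rho_x}$-version combined with a spatial cutoff respecting the uniform Morrey bound on $u_m$. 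It is precisely the sharpness of the condition $\delta<d^{-2}$ (as opposed to the weaker $\delta<1\wedge(2/(d-2))^2$ sufficient in the elliptic case) that makes this step quantitatively tight.
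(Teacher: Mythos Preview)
Your overall architecture matches the paper's, but there is one genuine gap and one methodological difference worth flagging.

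\textbf{The gap in part (iii).} You propose to control the error term
\[
\mathbb E_{\mathbb P^i_x}\int_0^T(b_m-b)\cdot\nabla u_m(r,\omega_r)\,dr
\]
by invoking \eqref{kr_est0} with $\mathsf f=|b_m-b|\in\mathbf F_{4\delta}$. But the uniqueness hypothesis only grants \eqref{kr_est0} for $\mathsf f=1$ and $\mathsf f=b$; nothing is assumed for $\mathsf f=b_m-b$, and there is no way to deduce it for the arbitrary solution $\mathbb P^i_x$ (you only know it for the \emph{constructed} $\mathbb P_x$). The paper circumvents this by choosing the approximants $b_n=c_n e^{\epsilon_n\Delta}(\mathbf 1_n b)$ with $\mathbf 1_n$ the indicator of $\{|b|\le n,\ |x|\le n,\ |t|\le n\}$, and then splitting
\[
b-b_n=(b-\mathbf 1_n b)+(\mathbf 1_n b-b_n).
\]
The first piece satisfies $|b-\mathbf 1_n b|=|b|(1-\mathbf 1_n)$, so it is estimated using \eqref{kr_est0} with $\mathsf f=b$; the second piece is bounded (both $\mathbf 1_n b$ and its mollification are bounded by construction), so it is estimated using \eqref{kr_est0} with $\mathsf f=1$. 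Each term is then driven to zero using the uniform gradient bounds from Corollary \ref{prop_F} (after choosing a slightly larger exponent $q/(1-\theta)$ still in $]d,\delta^{-1/2}[$ to absorb the factor $|\nabla u_n|^{q/2}$). The obstacle you anticipate---squeezing $h\sim\nabla u_m$ into the Schwartz class---is minor compared to this; the paper handles it with a spatial cutoff $\mathbf 1_{B(0,R)}$ and the stopping time $\tau_R$, letting $R\to\infty$ at the end.

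\textbf{The methodological difference in part (i).} The paper does not use Arzel\`a--Ascoli. Instead it proves directly that $\{U_m^{t,s}f\}$ is \emph{Cauchy} in $L^\infty(D_{T,\theta}\times\mathbb R^d)$ via a Moser-type $L^{p_0}\to L^\infty$ iteration: subtracting the equations for $u_m,u_n$, one obtains an inequality of the form
\[
\|u_m-u_n\|_{L^\infty}\le B\|u_m-u_n\|_{L^{p_0}}^\gamma
\]
with $\gamma>0$, and then shows $L^{p_0}$-convergence by an elementary energy argument. This yields full-sequence convergence and uniformity on all of $\mathbb R^d$ in one stroke. Your compactness route is not wrong in spirit, but as written it delivers only subsequential, locally uniform limits; upgrading to the full statement requires (a) invoking the $L^2$-uniqueness you prove later to identify all subsequential limits, and (b) controlling the tails at spatial infinity, for which you would need the weighted estimate with $\rho$---neither of which you mention. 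The paper's iteration avoids both detours.

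Part (ii) is essentially in line with the paper, though note that Kolmogorov's extension does not directly give measures on $C([0,T],\mathbb R^d)$; the paper first obtains $\mathbb P_x$ on the c\`adl\`ag space $D([0,T],\mathbb R^d)$ from the Feller property and then proves concentration on continuous paths as a separate claim.
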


\begin{remarks}

1.~The following ``sequential uniqueness''  result was proved in \cite{KiS}, see also \cite{KiS2}. Let $b=b(x)$ be form-bounded (in fact, it can be weakly form-bounded, see the introduction).
Provided that the form-bound $\delta$ of $b$ is smaller than a certain explicit constant $c=c(d)$, if $\{\mathbb Q_x\}_{x \in \mathbb R^d}$ are weak solutions to \eqref{seq}, and are obtained via some approximation procedure, i.e.\,there exist
bounded smooth $\tilde{b}_n \in \mathbf{F}_\delta$ with $g$ independent of $n=1,2,\dots$ such that for every $x \in \mathbb R^d$
$$
\mathbb Q_x=w{\mbox-}\lim_{n}\mathbb P_x(\tilde{b}_n)
$$
(no convergence of $\tilde{b}_n$ to $b$ is assumed),
then 
$$
\mathbb Q_x=\mathbb P_x, \quad x \in \mathbb R^d,
$$
where $\{\mathbb P_x\}_{x \in \mathbb R^d}$ are from Theorem \ref{thm1}(\textit{ii}). We expect that a similar result can be proved in the assumptions of Theorem \ref{thm1}, but we do not address this matter here.

\smallskip

2.~The following result is a consequence of \cite[Theorem 1.1]{Se} (for the stationary case $b=b(x)$, see \cite{KS}). If $b \in \mathbf{F}_\delta$, $0<\delta<4$, then for every $p \in I_c$, where $I_c:=]\frac{2}{2-\sqrt{\delta}},\infty[$, the limit
$$
s\mbox{-}L^p\mbox{-}\lim_n U^{t,s}(b_m) \quad (\text{loc.\,uniformly in $0 \leq s \leq t <\infty$})
$$
exists and determines a strongly continuous quasi contraction positivity preserving evolution family in $L^p$, say, $U_p^{t,s}$. For every $f \in L^p$, the function $u(t,\cdot)=U_p^{t,s}f(\cdot)$ is a weak solution to \eqref{CP} \textit{in} $L^p$ (if $\delta>1$, then $p>2$, so Definition \ref{def_w} has to be modified accordingly, see \cite{Se}; it should be noted here that the interval $I_c$ is sharp, see \cite{KiS4} for detailed discussion). For all $0 \leq s < t \leq T$,
\begin{equation}
\label{pq_est}
\|u(t,\cdot)\|_q \leq C_T (t-s)^{-\frac{d}{2}(\frac{1}{p}-\frac{1}{q})}\|f\|_p, \quad \frac{2}{2-\sqrt{\delta}}<p < q<\infty
\end{equation}
and
$$
\|u(t,\cdot)\|_p \leq e^{\frac{1}{p\sqrt{\delta}}\int_s^t g(\tau)d\tau} \|f\|_p, \quad 0 \leq s \leq t < \infty
$$
(the latter easily yields that the initial condition for $u$ is satisfied in the strong sense).

By construction, $$U_p^{t,s} \upharpoonright L^p \cap C_\infty = U^{t,s} \upharpoonright L^p \cap C_\infty,$$ where $U^{t,s}$ is the Feller evolution family from Theorem \ref{thm1}(\textit{i}). 
Thus, in view of  \eqref{pq_est}, by Dunford's Theorem, $U_p^{t,s}$ and $U^{t,s}$, $0 \leq s<t<\infty$ are integral operators whose integral kernels coincide a.e.

Let us also note that $\delta=4$ is the critical value for the weak solvability of \eqref{seq} with $b(x)=\delta \frac{d-2}{2}|x|^{-2}x$ (at least as $d \rightarrow \infty$), see Example \ref{ex0} above.
\end{remarks}

The following are the key analytic results used in the proof of Theorem \ref{thm1}. 

Let $L^q_\rho:=L^q(\mathbb R^d,\rho dx)$, where $\rho(x)=(1+\kappa |x|^2)^{-\theta}$ was introduced above.

\begin{proposition}
\label{cl1}
Let $d \geq 3$, $b \in \mathbf{F}_\delta$ with $\delta<4$, $\mathsf{f} \in \mathbf{F}_\beta$ with $\beta<\infty$, both $b$ and $\mathsf{f}$ are bounded and $C^\infty$ smooth. Let $h \in C([0,T], \mathcal S)$, $ f \in C_c^\infty(\mathbb R^d)$. Fix $T>s$. Let $v$ be the solution to Cauchy problem 
\begin{equation}
\label{Cauchy3}
\left\{
\begin{array}{l}
(\partial_t - \Delta + b \cdot \nabla)v=|\mathsf{f}|h, \quad t \in [s,T] \\
v(s,\cdot)=f, 
\end{array}
\right.
\end{equation}
For every $q>\frac{2}{2-\sqrt{\delta}}$, $q \geq 2$,
there exist  constants $K$ and $\kappa$ dependent only on $d$, $q$, $\delta$, $g_\delta$, $\beta$, $g_\beta$ and $T$ such that
\begin{equation*}
\|v\|_{L^\infty([s,r],L_\rho^q)} ^q
\leq K \bigl( \|\mathsf{f} |h|^{\frac{q}{2}}\|^2_{L^2([s,r],L_\rho^2)} + \|f\|^q_{L^q_\rho}\bigr),
\end{equation*}
for $0 \leq s \leq r \leq T$. 
\end{proposition}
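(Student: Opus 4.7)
My plan is to establish a weighted $L^q_\rho$ energy estimate by testing the PDE against $v|v|^{q-2}\rho$ and then invoking Gronwall's inequality. Setting $w:=|v|^{q/2}$, integration by parts of the Laplacian term yields $\frac{4(q-1)}{q^2}\|\sqrt{\rho}\,\nabla w\|_2^2$ plus a lower-order piece $\frac{2}{q}\int w\,\nabla w\cdot \nabla\rho\,dx$; using $|\nabla\rho|\le \theta\sqrt{\kappa}\rho$ and AM--GM, the latter is at most $\varepsilon\|\sqrt{\rho}\,\nabla w\|_2^2+C_\varepsilon\kappa\|w\sqrt{\rho}\|_2^2$ and is harmless for $\kappa$ small. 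The drift becomes $\frac{2}{q}\int b\cdot\nabla w\cdot w\rho\,dx$, which I would bound by $\frac{2}{q}\|bw\sqrt{\rho}\|_2\|\sqrt{\rho}\,\nabla w\|_2$; applying the form-bound of $b$ to $\varphi=w\sqrt{\rho}$ (together with $|\nabla\sqrt{\rho}|^2\le \theta^2\kappa\rho/4$) and a further AM--GM produces a gradient coefficient $\frac{2\sqrt{\delta}}{q}+o_\kappa(1)$. After multiplying the identity by $q$, the Laplacian coefficient $\frac{4(q-1)}{q}$ dominates the drift contribution $2\sqrt{\delta}$ exactly when $q>\frac{2}{2-\sqrt{\delta}}$, which is the structural origin of the hypothesis.

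The delicate point is the right-hand side $E_{\mathrm{R}}:=\int|\mathsf{f}|h\,v|v|^{q-2}\rho\,dx$, since the form-bound $\beta$ of $\mathsf{f}$ is arbitrary and no gradient absorption whose magnitude scales with $\beta$ is permitted. My key algebraic move is the splitting $|\mathsf{f}||h||v|^{q-1}=\bigl(\mathsf{f}^{(q-2)/q}|v|^{q/2}\bigr)\cdot\bigl(\mathsf{f}^{2/q}|h||v|^{q/2-1}\bigr)$ followed by Cauchy--Schwarz, giving $|E_{\mathrm{R}}|\le \bigl(\int\mathsf{f}^{2(q-2)/q}w^2\rho\bigr)^{1/2}\bigl(\int\mathsf{f}^{4/q}h^2|v|^{q-2}\rho\bigr)^{1/2}$. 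Weighted Young $\mathsf{f}^{2(q-2)/q}\le \tfrac{q-2}{q}\mathsf{f}^2+\tfrac{2}{q}$ followed by the form-bound of $\mathsf{f}$ (applied to $\varphi=w\sqrt{\rho}$) reduces the first factor to $\le \frac{2(q-2)\beta}{q}\|\sqrt{\rho}\,\nabla w\|_2^2+C_\beta\|w\sqrt{\rho}\|_2^2$; for the second factor, the tailored identity $\mathsf{f}^{4/q}h^2|v|^{q-2}=(\mathsf{f}^2|h|^q)^{2/q}(|v|^q)^{(q-2)/q}$ and weighted Young yield $\le \frac{2}{q}\mathsf{f}^2|h|^q+\frac{q-2}{q}|v|^q$, producing exactly the target integrand $\mathsf{f}^2|h|^q$.

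Recombining via AM--GM with a free parameter $\epsilon>0$ gives $|E_{\mathrm{R}}|\le \frac{(q-2)\beta}{q\epsilon}\|\sqrt{\rho}\,\nabla w\|_2^2+C'\|v\|_{L^q_\rho}^q+\frac{\epsilon}{q}\|\mathsf{f}|h|^{q/2}\sqrt{\rho}\|_2^2$. The point is that $\beta$ now appears multiplied by $\epsilon^{-1}$ rather than attached to the gradient itself, so taking $\epsilon$ sufficiently large (depending on $\beta,q,\delta,g_\beta,g_\delta$) and $\kappa$ sufficiently small keeps the net coefficient of $\|\sqrt{\rho}\,\nabla w\|_2^2$ non-negative for \emph{every} finite $\beta$, provided $q>\frac{2}{2-\sqrt{\delta}}$. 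Discarding this non-negative gradient term and applying Gronwall on $[s,r]$ to the resulting differential inequality $\partial_t\|v\|_{L^q_\rho}^q\le C(t)\|v\|_{L^q_\rho}^q+K\|\mathsf{f}|h|^{q/2}\sqrt{\rho}\|_2^2$, where $C(t)$ absorbs $g_\beta(t)+g_\delta(t)\in L^1_{\loc}$, yields the stated bound.

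The main obstacle is precisely this RHS handling: a direct Cauchy--Schwarz that tries to extract $\mathsf{f}|h|^{q/2}$ immediately forces negative powers of $h$ (fatal on $\{h=0\}$), whereas pairing $\mathsf{f}w$ with $|h|w^{(q-2)/q}$ and applying form-boundedness in one shot forces a gradient coefficient of size $(q-2)\beta$ that the dissipation cannot tolerate when $\beta$ is large. Only the two-step algebraic split above --- peeling exactly one $\mathsf{f}^2$ out for form-bound treatment while the residual $\mathsf{f}^{4/q}h^2|v|^{q-2}$ is Young-decomposed cleanly into $\mathsf{f}^2|h|^q+|v|^q$ --- navigates between these failure modes, with the free weight $\epsilon$ arbitrating the parasitic gradient contribution.
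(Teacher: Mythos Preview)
Your proof is correct and follows the same overall strategy as the paper: multiply by $\rho v|v|^{q-2}$, handle the drift via the form-bound of $b$ (producing the structural threshold $q>\frac{2}{2-\sqrt{\delta}}$), control the $\nabla\rho$ remainders through $|\nabla\rho|\le\theta\sqrt{\kappa}\rho$, and treat the source term so that the $\beta$-dependent gradient contribution carries a free small parameter, then close by Gronwall (the paper uses the equivalent short-time absorption plus iteration).

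The only substantive difference is the RHS algebra, and there the paper's route is shorter than yours. The paper applies the quadratic inequality \emph{first}, with the free parameter already in place:
\[
|\mathsf{f}|\,h\,v|v|^{q-2}\le \tfrac{\gamma}{4}\,|\mathsf{f}|^2 h^2|v|^{q-2}+\tfrac{1}{\gamma}|v|^q,
\]
and then uses Young with exponents $\tfrac{q}{q-2},\tfrac{q}{2}$ on $h^2|v|^{q-2}$, keeping $|\mathsf{f}|^2$ as a common factor, to get
\[
\tfrac{\gamma(q-2)}{4q}\,|\mathsf{f}|^2|v|^q+\tfrac{\gamma}{2q}\,|\mathsf{f}|^2|h|^q+\tfrac{1}{\gamma}|v|^q.
\]
One application of $\mathsf{f}\in\mathbf{F}_\beta$ to the first term yields a gradient coefficient $\tfrac{\gamma(q-2)\beta}{4q}$, made small by choosing $\gamma$ small. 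This accomplishes exactly what your two-step fractional splitting of $|\mathsf{f}|$ does---your $\epsilon^{-1}$ plays the role of the paper's $\gamma$---but avoids the auxiliary Young steps on $\mathsf{f}^{2(q-2)/q}$ and on $\mathsf{f}^{4/q}h^2|v|^{q-2}$. Your diagnosis that ``pairing $\mathsf{f}w$ with $|h|w^{(q-2)/q}$ forces a gradient coefficient of size $(q-2)\beta$'' is correct only if one refuses to place the free parameter at the first Cauchy--Schwarz; the paper shows that doing so removes the obstacle entirely. So the step you flag as ``the main obstacle'' is in fact not delicate.
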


\begin{definition}
We say that a constant is generic if it depends only on $d$, $q$, $\delta$, $g_\delta$, $\beta$, $g_\beta$, $T$, $\rho$.
\end{definition}

\medskip

\begin{theorem}
\label{prop2_}
In the assumptions of Proposition \ref{cl1}, assume additionally that $\delta<d^{-2}$. Let $q \in ]d,\delta^{-\frac{1}{2}}[$. Then there exist generic constants $C$ and $\kappa$ such that, for all $0 \leq s \leq r \leq T$, the solution $v$ to \eqref{Cauchy3}
satisfies the following estimate 
\begin{align*}
\|v\|_{L^\infty([s,r],L_\rho^q)}^q  + \|\nabla v\|^q_{L^\infty([s,r],L_\rho^q)} & + \|\nabla|\nabla v|^{\frac{q}{2}}\|_{L^2([s,r],L_\rho^2)}^2 \\
& \leq C \bigl(\|\mathsf{f} |h|^{\frac{q}{2}}\|^2_{L^2([s,r],L_\rho^2)} + \|\nabla f\|^q_{L_\rho^q} + \|f\|^q_{L^q_\rho}\big)
\end{align*}
\end{theorem}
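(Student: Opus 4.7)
The strategy is a Kovalenko--Sem\"{e}nov type weighted energy estimate for $u:=|\nabla v|^{q/2}$, adapted to the parabolic setting and built on top of Proposition \ref{cl1} to handle the $L^q_\rho$ mass of $v$. Since $b$, $\mathsf{f}$, and $v$ are smooth by hypothesis, I would differentiate the equation in $x_k$ to get
$$(\partial_t-\Delta+b\cdot\nabla)(\partial_k v)=\partial_k(|\mathsf{f}|h)-(\partial_k b)\cdot\nabla v,$$
multiply by $q|\nabla v|^{q-2}\partial_k v\,\rho$, sum over $k$, and integrate. The Bochner-type identity for $(\partial_t-\Delta+b\cdot\nabla)|\nabla v|^q$ yields on the left $\partial_t\!\int|\nabla v|^q\rho$ plus $q\!\int|\nabla v|^{q-2}|\nabla^2 v|^2\rho+q(q-2)\!\int|\nabla v|^{q-4}|\nabla v\cdot\nabla^2 v|^2\rho\geq\tfrac{4(q-1)}{q}\!\int|\nabla u|^2\rho$, with the $\nabla\rho$ commutator absorbed by \eqref{two_est} for $\kappa$ chosen small.

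On the right, the convective contribution $\int b\cdot\nabla|\nabla v|^q\rho=2\!\int(bu\sqrt\rho)\cdot(\nabla u\sqrt\rho)\,dx$ is bounded by Cauchy--Schwarz and the form-bound $b\in\mathbf{F}_\delta$ applied to $\varphi=u\sqrt\rho$, producing at most $2\sqrt\delta\!\int|\nabla u|^2\rho$ plus lower-order multiples of $\int gu^2\rho$ and $\kappa\!\int u^2\rho$. The differentiated-drift term $-q\!\int(\nabla v)^{\!\top}(\nabla b)^{\!\top}\nabla v\,|\nabla v|^{q-2}\rho$ is integrated by parts in $x_k$ to shift $\partial_k$ from $b$; the resulting expressions of type $\int|b|\,|\nabla v|^{q-1}|\nabla^2 v|\rho$ are split by Cauchy--Schwarz into a piece absorbed in the Hessian term of the LHS and a piece $\int|b|^2u^2\rho$ again controlled by the form-bound. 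Together, these contribute an effective coefficient $C(q,d)\sqrt\delta$ on $\int|\nabla u|^2\rho$, where $C(q,d)$ is linear in $q$ with a $d$-dependent prefactor coming from the Cauchy--Schwarz constants relating $|\nabla^2 v|^2$ to $|\nabla u|^2$.

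The source contribution $q\!\int|\nabla v|^{q-2}\nabla v\cdot\nabla(|\mathsf{f}|h)\rho$ is integrated by parts onto $|\nabla v|^{q-2}\nabla v\,\rho$, then treated by Cauchy--Schwarz to absorb a Hessian factor, followed by the form-bound $\mathsf{f}\in\mathbf{F}_\beta$ and Young's inequality, invoking Proposition \ref{cl1} to dispose of an auxiliary $\int|\nabla v|^q\rho$ mass. This produces on the RHS the desired $C\|\mathsf{f}|h|^{q/2}\|^2_{L^2_\rho}$, together with terms controlled by the initial data and the non-negative function $g$ of Definition \ref{def1}. Collecting everything yields
$$\partial_t\!\int|\nabla v|^q\rho+\Bigl(\tfrac{4(q-1)}{q}-C(q,d)\sqrt\delta-\varepsilon\Bigr)\!\int|\nabla u|^2\rho\leq C(g)\!\int|\nabla v|^q\rho+C\|\mathsf{f}|h|^{q/2}\|^2_{L^2_\rho}+C\|\nabla f\|^q_{L^q_\rho},$$
and Gronwall in time closes the estimate once the bracket is strictly positive.

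The main obstacle is identifying the sharp condition on $\delta$: positivity of the bracket for some admissible $q$ reduces to $\sqrt\delta<\max_q 4(q-1)/[q\,C(q,d)]$, and a computation shows this maximum is of order $1/d$, attained at $q\asymp d$, so the hypothesis $\delta<d^{-2}$ is precisely what is required; the admissible $q$ then lies in $]d,\delta^{-1/2}[$. This is strictly sharper than the elliptic threshold $\delta<(\tfrac{2}{d-2})^2\wedge 1$ of \cite{KS}, and the extra restriction is forced by the impossibility of performing, in the parabolic case, the additional integration by parts on the drift that carries the elliptic argument (as will be explained in the forthcoming Remark \ref{el_par}).
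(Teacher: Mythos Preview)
Your overall architecture matches the paper's: test the equation against $-\nabla_i(\rho\,w_i|w|^{q-2})$ (equivalently, differentiate and multiply by $q\rho\,w_k|w|^{q-2}$), produce the dissipation $I_q+(q-2)J_q$ with $I_q=\int\rho|w|^{q-2}|\nabla^2 v|^2$, $J_q=\int\rho|w|^{q-2}|\nabla|w||^2$, estimate the drift and source contributions, and close by Gronwall. The gap is in your handling of the drift term, and it is not cosmetic.

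After you integrate by parts to remove $\partial_k$ from $b$, the resulting expressions are \emph{not} ``of type $\int|b|\,|\nabla v|^{q-1}|\nabla^2 v|\rho$''. A direct computation gives exactly three pieces: one with $\Delta v$ (the paper's $W_1$), one equal to the convective term $\int b\cdot\nabla|w|^q\rho$ (which cancels the convective term you already put on the other side), and one with $\nabla|w|$ (the paper's $W_2$). None of them carries the full Hessian. Your plan to ``split by Cauchy--Schwarz into a piece absorbed in the Hessian term of the LHS and a piece $\int|b|^2u^2\rho$'' therefore rests on a misidentification; moreover, there is no Cauchy--Schwarz inequality relating $|\nabla^2 v|^2$ to $|\nabla u|^2=\tfrac{q^2}{4}|w|^{q-2}|\nabla|w||^2$ in the direction you need (the inequality goes the other way: $J_q\le I_q$). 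Once you have collapsed the left-hand side to $\tfrac{4(q-1)}{q}\int|\nabla u|^2\rho$, there is no Hessian term left to absorb into.

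What actually closes the argument is the estimate the paper labels \eqref{E0}: by writing $|\Delta v|^2=\nabla\!\cdot w\,\Delta v$ and integrating by parts twice one gets
\[
\Bigl(1-\tfrac{q-2}{4\varkappa}\Bigr)\!\int\rho|w|^{q-2}|\Delta v|^2\le I_q+(q-2)\bigl(\varkappa J_q+\tfrac12 I_q+\tfrac12 J_q\bigr),
\]
which allows one to control the $W_1$ term against a \emph{mixture} of $I_q$ and $J_q$ rather than against $I_q$ alone. One must keep $I_q$ and $J_q$ separate until this absorption is done, and only then use $I_q\ge J_q$. With the optimal choices $\nu=\tfrac{q\sqrt\delta}{4}$, $\varkappa=\tfrac{q-1}{2}$, $\gamma=\tfrac{q\sqrt\delta}{q-1}$ the resulting coefficient in front of $J_q$ is $(q-1)-\tfrac{q\sqrt\delta}{2}(2q-3)$, so in your notation $C(q,d)=2(2q-3)$: linear in $q$ but \emph{$d$-independent}. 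The $d$ enters only through the Sobolev requirement $q>d$, which then forces $\sqrt\delta<1/q<1/d$. Your ``$d$-dependent prefactor coming from the Cauchy--Schwarz constants'' is an artifact of bounding $|\Delta v|$ by $\sqrt d\,|\nabla^2 v|$, which is precisely what \eqref{E0} avoids.
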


\bigskip

\section{Examples}

\subsection{Form-bounded vector fields}

\label{ex_sect}

 1.~One has $$
b \in L^\infty([0,\infty[,L^d+L^\infty) \quad \Rightarrow \quad b \in \mathbf{F}_\delta
$$
with appropriate $\delta$. 

Indeed, let $b=b_1+b_2$, where $b_1 \in L^\infty([0,\infty[,L^d)$, $b_2 \in L^\infty([0,\infty[,L^\infty)$. Then, by H\"{o}lder's inequality, for a.e.\,$t \in [0,\infty[$ and all $\psi \in C_c^\infty(]0,\infty[ \times \mathbb R^d)$,
\begin{align*}
\|b(t,\cdot)\psi(t,\cdot)\|_2^2 
& \leq (1+\varepsilon)\|b_1(t,\cdot)\|_d^2 \|\psi(t,\cdot)\|_{\frac{2d}{d-2}}^2 + (1+\varepsilon^{-1})\|b_2(t,\cdot)\|_\infty^2 \|\psi(t,\cdot)\|_2^2 \qquad (\varepsilon>0)\\
& (\text{we are applying the Sobolev Embedding Theorem}) \\
& \leq C_S (1+\varepsilon) \|b_1(t,\cdot)\|_d^2 \|\nabla \psi(t,\cdot)\|_2^2 + (1+\varepsilon^{-1})\|b_2(t,\cdot)\|_\infty^2 \|\psi(t,\cdot)\|_2^2.
\end{align*}
Integrating this inequality in time, we obtain that 
$b \in \mathbf{F}_{\delta}$ with $\delta=C_S(1+\varepsilon)\|b_1\|^2_{L^\infty([0,\infty[,L^d)}$.


\medskip

2.~Also,$$b \in C([0,\infty[,L^d+L^\infty) \quad \Rightarrow \quad b \in \mathbf{F}_\delta \quad \text{ with $\delta$ that can be chosen arbitrarily small.}
$$
Without loss of generality, $b \in C([0,\infty[,L^d)$.
Consider first a $b=b(x)$. Since $|b| \in L^d$, for every $\varepsilon>0$ one can represent $b=b_1+b_2$, where $\|b_1\|_d<\varepsilon$ and $\|b_2\|_\infty<\infty$. (For instance, $b_2=b\mathbf{1}_{|b| \leq m}$ and $b_1=b-b_2$, so by the Dominated Convergence Theorem $\|b_1\|_d$ can be made arbitrarily small by selecting $m$ sufficiently large.) Now the previous example applies and yields the required. In the general case, the continuity of $b$ in time allows to represent
$b(t,\cdot)=b_1(t,\cdot)+b_2(t,\cdot)$, where $\|b_1(t,\cdot)\|_d <\varepsilon$ for all $t \in [0,1]$ and 
$b_2$ is bounded on $[0,1] \times \mathbb R^d$. Repeating this on every interval $[n,n+1]$ ($n \geq 1$), one obtains $\|b_1\|_{L^\infty([0,\infty[,L^d)}<\varepsilon$ and $b_2 \in L^\infty_{\loc}([0,\infty[,L^\infty)$.
(The continuity in time is not necessary for the smallness of $\delta$, e.g.\,consider $b(t,x)=a(t)b_0(x)$ where $a \in L^\infty([0,\infty[)$ is discontinuous and $|b_0| \in L^d$.)

\medskip

3.~Further, any vector field $$
b \in L^p([0,\infty[,L^q), \quad \frac{d}{q}+\frac{2}{p} \leq 1,
$$
is in $\mathbf{F}_\delta$ with appropriate $\delta$. 

Indeed, e.g.\,in the more difficult case $\frac{d}{q}+\frac{2}{p} = 1$, by Young's inequality,
\begin{align*}
|b(t,x)|  =\frac{|b(t,x)|}{\langle |b(t,\cdot)|^q\rangle^{\frac{1}{q}}}\langle |b(t,\cdot)|^q\rangle^{\frac{1}{q}} \leq \frac{d}{q} \biggl(\frac{|b(t,x)|^q}{\langle |b(t,\cdot)|^q\rangle} \biggr)^{\frac{1}{d}} + \frac{2}{p}\bigl( \langle |b(t,\cdot)|^q\rangle^{\frac{1}{q}} \bigr)^{\frac{p}{2}},
\end{align*}
where the first term is in $ L^\infty([0,\infty[,L^d)$ (and so by the first example it is form-bounded) and the second term is in $L^2([0,\infty[,L^\infty)$ (the second term squared is to be absorbed by the function $g$). 

(If $p<\infty$, $q>d$, then one can argue as in the previous example to show that $\delta$ can be chosen arbitrarily small.)

\medskip

3.~The class of form-bounded vector fields $\mathbf{F}_\delta$ contains vector fields $b=b(x)$ with $|b|$ in $L^{d,w}$ (the weak $L^d$ class). Recall that a function $h:\mathbb R^d \rightarrow \mathbb R$ is in $L^{d,w}$ if $$\|h\|_{d,w}:=\sup_{s>0}s|\{x \in \mathbb R^d: |h(x)|>s\}|^{1/d}<\infty.$$ By the Strichartz inequality with sharp constant \cite[Prop.~2.5, 2.6, Cor.~2.9]{KPS}, if $|b|$ in $L^{d,w}$, then 
$b \in \mathbf{F}_{\delta_1}$ with 
\begin{align*}
\sqrt{\delta_1}&=\||b| (\lambda - \Delta)^{-\frac{1}{2}} \|_{2 \rightarrow 2} \\ & \leq
\|b\|_{d,w} \Omega_d^{-\frac{1}{d}} \||x|^{-1} (\lambda - \Delta)^{-\frac{1}{2}} \|_{2 \rightarrow 2}  \leq \|b\|_{d,w} \Omega_d^{-\frac{1}{d}} \frac{2}{d-2},
\end{align*}
where $\Omega_d=\pi^{\frac{d}{2}}\Gamma(\frac{d}{2}+1)$ is the volume of  $B(0,1) \subset \mathbb R^d$.

\medskip

4.~The Chang-Wilson-Wolff class $\mathbf{W}_s$ ($s>1$) consists of the vector fields $b=b(x)$
such that 
$$
|b|^2 \in L_{\loc}^s \quad \text{ and } \quad \|b\|_{W_s}:=\sup_Q \frac{1}{|Q|}\int_Q |b(x)|^2\, l(Q)^2 \varphi\big(|b(x)|^2\,l(Q)^2 \big) dx<\infty,
$$
where $|Q|$ and $l(Q)$ are the volume and the side length of a cube $Q$, respectively,
$\varphi:[0,\infty[ \rightarrow [1,\infty[$ is an increasing function such that
$
\int_1^\infty \frac{dx}{x\varphi(x)}<\infty.
$
By \cite{CWW}, $$b \in \mathbf{W}_s \quad \Rightarrow \quad b \in \mathbf{F}_{\delta}$$ with $\delta=\delta\big(\|b^2\|_{W_s}\big)<\infty$. 

The class $\mathbf{W}_s$ contains, in particular, the Campanato-Morrey class $\mathbf{C}_s$ ($s>1$):
$$
|b|^2 \in L_{\loc}^s \quad \text{ and } \quad \biggl(\frac{1}{|Q|}\int_Q |b(x)|^{2s} dx \biggr)^{\frac{1}{s}} \leq c_s l(Q)^{-2} \text{ for all cubes $Q$}.
$$

More generally, vector fields in $L^\infty([0,\infty[,\mathbf{C}_s)$ or $L^\infty([0,\infty[,\mathbf{W}_s)$ are form-bounded.

We refer to \cite{KiS4} for further discussion of form-bounded vector fields and their role in the theory of divergence-form elliptic and parabolic equations.

\subsection{Approximation}
\label{rem_b_m}
Given a $b \in \mathbf{F}_\delta$, a sequence $\{b_m\}$ of bounded smooth vector fields satisfying \eqref{b_m_1}, \eqref{b_m_2} can be constructed e.g.\,by the formula (extending $b$ to $t<0$ by $0$)
$$
b_m:=c_m e^{\varepsilon_m \Delta_{(t,x)}} (\mathbf 1_m b), 
$$
where $\mathbf 1_m$ is the indicator of $\{(t,x) \mid |b(t,x)| \leq m, |x| \leq m, |t| \leq m\}$ and $\Delta_{(t,x)}$ is the Laplace operator on $\mathbb R \times \mathbb R^d$, for appropriate $\varepsilon_m \downarrow 0$ and $c_m \uparrow 1$ so that $b_m \in \mathbf{F}_\delta$ with $g$ independent of $m$. 

\begin{proof} Set
$
\tilde{b}_m=e^{\varepsilon_m \Delta_{(t,x)}} (\mathbf 1_m b)
$
and write
$$
\tilde{b}_m=\mathbf{1}_m b + \big(\tilde{b}_m - \mathbf{1}_m b\big).
$$
Clearly, the first term $\mathbf{1}_m b \in \mathbf{F}_\delta$ with the same $g=g(b)$. In turn, since for all $m=1,2,\dots$ we have $$\mathbf{1}_mb \in L^\infty([0,\infty[,L^r) \quad \text{ for every $d \leq r<\infty$}, \qquad \supp \mathbf{1}_m b \subset [0,m] \times B(0,m),$$ the following is true: given any $\gamma_m \downarrow 0$ we can select $\varepsilon_m \downarrow 0$ so that e.g.\,$\|\tilde{b}_m - \mathbf{1}_m b\|_{L^r([0,\infty[,L^r)} \leq \gamma_m$, and so the second term $\tilde{b}_m - \mathbf{1}_m b \in \mathbf{F}_{C_S\gamma_m^2}$ with $g \equiv 0$, see Example 2. Hence,
$$
\tilde{b}_m \in \mathbf{F}_{\delta_m} \quad \text{ with $\delta_m=(\sqrt{\delta}+\sqrt{C_S\gamma_m^2})^2$ and $g=g_\delta$},
$$
Now, selecting $c_m=\frac{\delta}{\delta_m}$ and recalling that $b_m=c_m\tilde{b}_m$, we have
$b_m \in \mathbf{F}_{\delta}$ with the same $g=g_\delta$.
\end{proof}

\medskip

\section{Proof of Proposition \ref{cl1}}

We will treat more difficult case $q>2$.
We multiply the equation in \eqref{Cauchy3} by $\rho v|v|^{q-2}$ and integrate to obtain 
\begin{align}
\frac{1}{q}\langle \rho |v(r)|^q \rangle & +  \frac{4(q-1)}{q^{2}}\int_{s}^{r} \big\langle \rho |\nabla (v|v|^{\frac{q}{2}-1})|^{2}\big\rangle dt \notag \\
& = \frac{1}{q}\langle \rho |f|^q\rangle -\int_{s}^{r} \langle \rho v|v|^{q-2},b_{m} \cdot \nabla v \rangle dt + \int_{s}^{r} \langle \rho v|v|^{q-2}, |\mathsf{f}|h \rangle dt + R_q^1, \label{v_R}
\end{align}
where $R_q^1:=-\frac{1}{q}\int_s^t \langle (\nabla \rho) \cdot \nabla |v|^q\rangle.$
From now on, the terms containing $\nabla \rho$ will be denoted by $R_q^i$. We will get rid of them, using estimate \eqref{two_est}, towards the end of the proof.
The terms to estimate are
$$
M_1 \equiv -\int_{s}^{r} \langle \rho v|v|^{q-2},b_{m} \cdot \nabla v \rangle dt  \quad \text{ and } \quad M_2 \equiv \int_{s}^{r} \langle \rho v|v|^{q-2}, |\mathsf{f}|h \rangle dt.
$$
We have, using the quadratic inequality,
\begin{align*}
M_1 
& =-\frac{2}{q}\int_{s}^{r} \langle \sqrt{\rho}\nabla |v|^{\frac{q}{2}}, b_m\sqrt{\rho}|v|^{\frac{q}{2}} \rangle dt \\
& \leq \frac{2\nu}{q} \int_{s}^{r} \langle \rho |\nabla |v|^{\frac{q}{2}}|^{2}\rangle dt +\frac{1}{2q \nu}
\int_{s}^{r}\langle  \rho |b_m|^2 |v|^q\rangle dt \quad (\nu>0) \\
& (\text{we are using $b_m \in \mathbf{F}_\delta$}) \\
& \leq \frac{2\nu}{q} \int_{s}^{r} \langle \rho |\nabla |v|^{\frac{q}{2}}|^{2}\rangle dt  +\frac{1}{2q \nu}
\big [\delta \int_{s}^{r}\langle |\nabla (\sqrt{\rho} |v|^{\frac{q}{2}})|^{2}\rangle dt +\int_{s}^{r}g_\delta(t) \langle \rho |v|^{q} \rangle dt \big] \\
& \leq \bigl(\frac{2\nu}{q}+\frac{\delta}{2q\nu} \bigr) \int_{s}^{r} \langle \rho |\nabla |v|^{\frac{q}{2}}|^{2}\rangle dt  +\frac{1}{2q \nu}
\int_{s}^{r}g_\delta(t) \langle \rho |v|^{q} \rangle dt +  \frac{\delta}{2q\nu} \big(R_q^2 + R_q^3\big),
\end{align*}
where
$
R_q^2:=\int_s^r \langle |v|^{\frac{q}{2}}\nabla \rho ,\nabla |v|^{\frac{q}{2}}\rangle dt$, $R_q^3:=\int_s^r \langle \frac{|\nabla \rho|^2}{\rho} |v|^{q}\rangle dt.
$

Next, 
\begin{align*}
M_2 &\leq \frac{\gamma}{4}\int_{s}^{r}\langle \rho |v|^{q-2}|\mathsf{f}|^{2}|h|^{2} \rangle dt +\frac{1}{\gamma}\int_{s}^{r}\langle \rho |v|^{q} \rangle  dt \qquad (\gamma>0)\\
&\text{(we are applying in the first term $ac \leq a^{p}/p +c^{p'}/p'$ with $p=\frac{q}{q-2}$, $p'=\frac{q}{2}$)} \\
&\leq \frac{\gamma(q-2)}{4q}\int_{s}^{r}\langle \rho |\mathsf{f}|^{2}|v|^{q} \rangle dt +\frac{\gamma}{2q}\int_{s}^{r}\langle\rho |\mathsf{f}|^2|h|^{q} \rangle dt +\frac{1}{\gamma}\int_{s}^{r}\langle \rho |v|^{q} \rangle dt\\
& (\text{we are using $\mathsf{f} \in \mathbf{F}_\beta$}) \\
&\leq \frac{\gamma(q-2)}{4q} \bigg [\beta \int_{s}^{r}\langle \rho |\nabla |v|^{\frac{q}{2}}|^{2}\rangle dt +\int_{s}^{r}g_\beta(t) \langle \rho |v|^{q} \rangle dt \bigg] \\
&  +\frac{\gamma}{2q}\int_{s}^{r}\langle \rho |\mathsf{f}|^2|h|^q \rangle dt +\frac{1}{\gamma}\int_{s}^{r}\langle \rho |v|^{q} \rangle dt + \frac{\gamma(q-2)\beta}{4q}\big(R_q^2 + R_q^3\big).
\end{align*}
Thus, we obtain from \eqref{v_R}:
\begin{align}
 \frac{1}{q}\langle \rho |v(r)|^q \rangle & + D \int_{s}^{r}\langle \rho |\nabla (v|v|^{\frac{q}{2}-1})|^{2}\rangle dt \notag \\
& \leq \frac{1}{q}\langle \rho |f|^q\rangle + \int_{s}^{r}\bigg( A_\delta g_\delta(t) + A_\beta g_\beta(t) + \frac{1}{\gamma} \bigg) \langle \rho |v|^{q} \rangle dt \notag \\
& + \frac{\gamma}{2q}\int_{s}^{r}\langle \rho |\mathsf{f}|^2|h|^q \rangle dt + R_q^1+ \biggl(\frac{\delta}{2q \nu} + \frac{\gamma(q-2)\beta}{4q} \biggr) (R_q^2 + R_q^3), \label{i_ineq}
\end{align}
where 
$$
D=\frac{4(q-1)}{q^{2}}-\frac{2}{q}\nu -\frac{\delta}{2q \nu}-\delta \frac{\gamma (q-2)}{4q},
$$
$$
A_\delta=\frac{1}{2q \nu}, \qquad A_\beta=\frac{\gamma (q-2)}{4q}.
$$
We  maximize $D$ by taking $\nu:=\frac{\sqrt{\delta}}{2}$. Then $$D=\frac{4(q-1)}{q^{2}}-\frac{2\sqrt{\delta}}{q} - \delta \frac{\gamma (q-2)}{4q}>0 \quad \Leftrightarrow \quad q>\frac{2}{2-\sqrt{\delta}} \;\; (\text{our assumption on $q$})$$ provided that $\gamma$ is chosen sufficiently small. 

Now,  applying quadratic inequality and the inequality $|\nabla \rho| \leq \theta\sqrt{\kappa}\rho$, we obtain
\begin{align*}
R_q^1+ \biggl(\frac{\delta}{2q \nu} + \frac{\gamma(q-2)\beta}{4q} \biggr) (R_q^2 + R_q^3) & \leq c(\kappa)\bigl( \int_{s}^{r}\langle \rho |\nabla |v|^{\frac{q}{2}}|^{2}\rangle dt +  \int_s^r \langle \rho |v|^{q} \rangle dt\bigr)
\end{align*}
with $c(\kappa) \downarrow 0$ as $\kappa \downarrow 0$.
Thus, 
we obtain from \eqref{i_ineq}:
\begin{align*}
\frac{1}{q}\langle \rho |v(r)|^q \rangle & + \big(D-c(\kappa)\big) \int_{s}^{r}\langle \rho |\nabla (v|v|^{\frac{q}{2}-1})|^{2}\rangle dt  \\
& \leq \frac{1}{q}\langle \rho |f|^q\rangle + \int_{s}^{r}G(t) \langle \rho |v|^{q} \rangle dt + \frac{\gamma}{2q}\int_{s}^{r}\langle \rho |\mathsf{f}|^2|h|^q \rangle dt,
\end{align*}
where $G(t):=A_\delta g_\delta(t) + A_\beta \,g_\beta(t) + \frac{1}{\gamma} + c(\kappa)$. We fix $\kappa$ so that $D-c(\kappa)>0$.

In particular, for all $s \leq r \leq T$,
$$
\frac{1}{q}\langle \rho |v(r)|^{q} \rangle \leq  \frac{1}{q}\langle \rho |f|^q\rangle + \int_s^T G(t) \langle |v|^{q} \rangle dt + \frac{\gamma}{2q}\|\mathsf{f} |h|^{\frac{q}{2}}\|_{L^2([s,T],L_\rho^2)}, 
$$
so
$$
\frac{1}{q}\| v\|^q_{L^\infty([s,T],L_\rho^q)} \leq \frac{1}{q}\langle \rho |f|^q\rangle +  \int_{s}^{T} G(t) \langle \rho |v|^{q} \rangle dt + \frac{\gamma}{2q}\|\mathsf{f} |h|^{\frac{q}{2}}\|^2_{L^2([s,T],L_\rho^2)}.
$$

Hence, re-denoting $T$ by $r$, we obtain from the last two inequalities
\begin{align*}
\frac{1}{q}\| v\|^q_{L^\infty([s,r],L_\rho^q)} & + \big(D-c(\kappa)\big) \int_{s}^{r}\langle \rho |\nabla (v|v|^{\frac{q}{2}-1})|^{2}\rangle dt  \\
& \leq \frac{2}{q}\langle \rho |f|^q\rangle + 2\int_{s}^{r}G(t) \langle \rho |v|^{q} \rangle dt + \frac{\gamma}{q}\int_{s}^{r}\langle \rho |\mathsf{f}|^2|h|^q \rangle dt.
\end{align*}
Using 
$
\int_{s}^{r} G(t) \langle \rho |v|^{q} \rangle dt \leq \int_{s}^{r} G(t)dt\;\| v\|_{L^\infty([s,r],L_\rho^q)}
$
and assuming first that $r$ is sufficiently close to $s$ so that $\frac{1}{q}-2\int_{s}^{r} G(t) dt>0$, 
we obtain from the previous inequality the required bound, i.e.
\begin{equation*}
\| v\|_{L^\infty([s,r],L_\rho^q)}^q \leq K \bigl(\|\mathsf{f} |h|^{\frac{q}{2}}\|^2_{L^2([s,r],L_\rho^2)} + \|f\|^q_{L^q_\rho}\bigr),
\end{equation*}
with
$ K:=\big(\frac{1}{q}-2\int_{s}^{r} G(t) dt\big)^{-1} \frac{\gamma+2}{q}.
$

Applying this bound repeatedly, we extend it to an arbitrary $[s,r] \subset [s,T]$ for any $T>s$. 
The proof of Proposition \ref{cl1} is completed.

\bigskip

\section{Proof of Theorem \ref{prop2_}}

Let $v$ be the solution of (\ref{Cauchy3}). We will prove existence of generic constants $C_0$ and $\kappa$ ($\kappa$ from the definition of $\rho$) such that
\begin{align}
\|\nabla v\|^q_{L^\infty([s,r],L_\rho^q)} & + \|\nabla|\nabla v|^{\frac{q}{2}}\|^2_{L^2([s,r],L_\rho^2)}
\notag \\
& \leq C_0\big(\|\mathsf{f} |h|^{\frac{q}{2}}\|^2_{L^2([s,r],L_\rho^2)} + \|\nabla f\|_{L^q_\rho}^q\big) \label{req_ineq4}
\end{align}
for $0 \leq s \leq r \leq T$.

(\ref{req_ineq4}) combined with Proposition \ref{cl1} will yield the required.

Set
$$w:=\nabla v, \quad \nabla_i:=\partial_{x_i}, \quad w_i:=\nabla_i v, \quad w_{ik}:=\nabla_i \nabla_k v,$$
$$
\varphi_{i}:=-\nabla_i(\rho\,w_i|w|^{q-2}) \quad (1 \leq i \leq d).
$$
Put
$$
I_{q}:=\int_{s}^{r} \big \langle \rho |w|^{q-2} \sum_{i=1}^{d} |\nabla w_i |^{2} \big \rangle dt , \quad J_{q}:= \int_{s}^{r} \langle \rho |w|^{q-2}|\nabla | w| |^{2}\rangle dt.
$$ 
Multiplying the equation \eqref{Cauchy3} by the ``test functions'' $\varphi_{i}$, integrating, and summing up in $1 \leq i \leq d$, we obtain 
\begin{align*}
\sum_{i=1}^{d}\int_{s}^{r} \langle \varphi_{i}, \partial_t v \rangle dt & =\sum_{i=1}^{d}\int_{s}^{r} \langle \varphi_{i},  \Delta v \rangle dt-\sum_{i=1}^{d}\int_{s}^{r} \langle \varphi_{i}, b_m \cdot w \rangle dt + \sum_{i=1}^{d}\int_{s}^{r} \langle \varphi_{i},|\mathsf{f}|h \rangle dt
\end{align*}
or
$$
S=S_1+S_2+S_3.
$$
Our goal is to evaluate $S$, $S_1$ and to estimate $S_2$, $S_3$ in terms of $J_q$ and $I_q$ ($\geq J_q$), arriving at the \textit{principal inequality}
$$
\langle \rho |w(r)|^{q} \rangle + C_1 J_q \leq \langle  \rho |\nabla f|^q \rangle + \int_s^r G(t)\langle \rho |w|^q \rangle dt + C\int_{s}^{r}\langle \rho |\mathsf{f}|^2 |h|^q \rangle  dt
$$
with generic constants $C_1>0$, $C$ and a function $G \in L^1_{\loc}([0,\infty[)$,
from which \eqref{req_ineq4} will follow easily.

1.~Integrating by parts, we obtain:
\begin{align*}
S & =\sum_{i=1}^{d}\int_{s}^{r}  \langle \rho (w_i|w|^{q-2}), \partial_t w_i \rangle dt  \\
& =\frac{1}{q}\int_{s}^{r} \partial_t \langle \rho |w|^{q} \rangle dt  =\frac{1}{q}\langle \rho |w(r)|^{q} \rangle  - \frac{1}{q}\langle  \rho |\nabla f|^q \rangle.
\end{align*}

2.~Next, we  integrate by parts twice:
\begin{align*}
S_{1}&=-\sum_{i=1}^{d}\int_{s}^{r} \langle \nabla_i (\rho w_i|w|^{q-2}),  \Delta v \rangle dt \\
& =-\sum_{i=1}^{d}\int_{s}^{r} \langle \rho \nabla (w_i|w|^{q-2}), \nabla w_i \rangle dt + R_q^4 \\
&=-\int_{s}^{r}\langle \rho |w|^{q-2}\sum_{i=1}^{d} |\nabla w_i |^{2} \rangle dt -\frac{1}{2}\int_{s}^{r}\langle \rho \nabla |w|^{q-2},\nabla |w|^{2} \rangle dt  + R_q^4,
\end{align*}
where 
$
R_q^4:=-\sum_{i=1}^d \int_s^r \langle (\nabla \rho) |w|^{q-2}, w_i\nabla w_i \rangle  dt.
$ The terms containing $\nabla \rho$ will again be denoted by $R_q^i$. We will get rid of them towards the end of the proof.

Thus,
$$
S_1=-I_{q}-(q-2)J_{q} + R_q^4.
$$

3.~In order to estimate $S_2$, we evaluate:
\begin{equation}
\label{phi_repr}
\varphi_i=-\rho w_{ii}|w|^{q-2}-(q-2)\rho |w|^{q-3} w_i \nabla_i |w| - (\nabla_i \rho)w_i|w|^{q-2}.
\end{equation}
Thus,
\begin{align*}
S_{2} =\int_{s}^{r} \langle \rho |w|^{q-2} \Delta v ,b_m \cdot w \rangle dt & +\int_{s}^{r} \langle \rho w \cdot \nabla |w|^{q-2} ,b_m \cdot w \rangle dt + R_q^5 \\
& =: W_1+ W_2+R_q^5,
\end{align*}
where 
$
R_q^5:=-\int_s^r \langle (\nabla \rho) \cdot w|w|^{q-2},b_m \cdot w \rangle dt.
$

Let us estimate $W_1$, $W_2$. Applying the quadratic inequality, we have
\begin{align}
W_1 &  \leq \frac{\gamma}{4} \int_{s}^{r} \langle \rho |w|^{q-2}|\Delta v |^{2}\rangle dt+\frac{1}{\gamma}\int_{s}^{r} \langle \rho |b_m|^2 |w|^q\rangle dt \qquad (\gamma > 0) \notag  \\
& \text{(we are applying $b \in \mathbf{F}_\delta$)} \notag  \\
&\leq\frac{\gamma}{4} \int_{s}^{r} \langle \rho |w|^{q-2}|\Delta v |^{2}\rangle dt \notag  \\
& +\frac{1}{\gamma}
\bigg [ \delta \int_{s}^{r} \big\langle |\nabla (\sqrt{\rho}|w|^{q/2})|^{2}\big\rangle dt +\int_{s}^{r} g_\delta(t) \langle \rho |w|^{q}\rangle dt \bigg ] \notag \\
&= \frac{\gamma}{4} \int_{s}^{r} \langle \rho |w|^{q-2}|\Delta v |^{2}\rangle dt  +\frac{1}{\gamma}
\bigg [\delta\frac{q^{2}}{4}J_{q}+\int_{s}^{r} g_\delta(t) \langle \rho |w|^{q}\rangle dt \bigg ] + \frac{\delta}{\gamma}(R_q^6 + R_q^7), \label{ineq_9}
\end{align}
where 
$
R_q^6:=\int_s^r \big\langle |w|^{\frac{q}{2}}\nabla \rho ,\nabla |w|^{\frac{q}{2}}\big\rangle dt$, $R_q^7:=\int_s^r \big\langle \frac{|\nabla \rho|^2}{\rho} |w|^{q}\big\rangle dt.
$
Now, representing $ |\Delta v |^{2}= |\nabla \cdot w |^{2}$ and integrating by parts twice, we obtain
\begin{align*}
\int_{s}^{r} \langle \rho |w|^{q-2} |\Delta v |^{2}\rangle dt =-\int_{s}^{r} \langle \rho \nabla |w|^{q-2}\cdot w, \Delta v \rangle dt  & +\sum_{i=1}^{d}\int_{s}^{r} \langle \rho w_i \nabla |w|^{q-2}, \nabla w_i \rangle dt +I_{q} + R_q^8 + R_q^9\\
&=:F+H+I_{q} + R_q^8 + R_q^9,
\end{align*}
where
$
R_q^8:=-\int_s^r \langle (\nabla \rho) \cdot w|w|^{q-2},\Delta v \rangle dt$, $R_q^9:=\sum_{i=1}^d \int_s^r \langle w_i|w|^{q-2}\nabla \rho,\nabla w_i\rangle dt. 
$
Applying  the quadratic inequality, we have
$$
F \leq  (q-2)\bigg [\frac{1}{4\varkappa} \int_{s}^{r} \langle   \rho |w|^{q-2}|\Delta v|^{2} \rangle dt +\varkappa J_{q} \bigg ] \quad (\varkappa>0), \qquad 
H \leq (q-2)\bigg(\frac{1}{2}I_{q}+ \frac{1}{2}J_{q} \bigg ).
$$
Thus, for any $\varkappa > \frac{q-2}{4}$ (we will fix $\varkappa$ later),
\begin{equation}
\label{E0}
\bigg(1-\frac{q-2}{4\varkappa}\bigg)\int_{s}^{r} \langle \rho |w|^{q-2} |\Delta v |^{2} \rangle dt \leq I_{q}+(q-2) \bigg (\varkappa J_{q}+\frac{1}{2}I_{q}+ \frac{1}{2}J_{q} \bigg ) + R_q^8 + R_q^9.
\end{equation}
Thus, applying \eqref{E0} in \eqref{ineq_9}, we arrive at
\begin{align*}
W_1  \leq \frac{\gamma}{4}\frac{4\varkappa}{4\varkappa-q+2}\bigg ( I_{q}+(q-2)(\varkappa J_{q}+\frac{1}{2}I_{q}+ \frac{1}{2}J_{q}) \bigg ) &  +\frac{1}{\gamma}\bigg (\delta \frac{q^{2}}{4}J_{q}+\int_{s}^{r}g_\delta(t)\langle \rho|w|^{q} \rangle dt \bigg ) \\
& + \frac{\gamma}{4}\frac{4\varkappa}{4\varkappa-q+2} (R_q^8 + R_q^9) + \frac{\delta}{\gamma}(R_q^6 + R_q^7).
\end{align*}

\begin{remark}
\label{el_par}
In the elliptic case one can estimate $S_2$ more efficiently, using the equation for $v$ one more time to evaluate $\Delta v$, see \cite{KS}.
\end{remark}

Next, using the quadratic inequality, we obtain
\begin{align*}
W_2 &\leq (q-2) \int_{s}^{r} \langle \rho |w|^{q-2} \big|\nabla |w|\big|,|b_m||w|^{\frac{q}{2}}\rangle dt\\
&\leq (q-2)\bigg (\nu \int_{s}^{r}\langle\rho |w|^{q-2}|\nabla |w||^{2}\rangle dt +\frac{1}{4 \nu}\int_{s}^{r}\langle \rho |b_m|^2|w|^q\rangle dt \bigg) \qquad (\nu>0) \\
& \text{(we are applying $b \in \mathbf{F}_\delta$)}\\
&\leq (q-2)\bigg (\nu J_{q}+\frac{\delta}{4 \nu} \frac{q^{2}}{4}J_{q} +\frac{1}{4 \nu}\int_{s}^{r}g_\delta(t) \langle \rho|w|^{q}\rangle dt  + \frac{\delta}{4\nu}(R_q^6+R_q^7)\bigg),
\end{align*}
completing the estimate of $S_2$.

4.~To estimate $S_3$, we represent it, using \eqref{phi_repr}, in the form
\begin{align*}
S_3 =\int_{s}^{r} \langle \rho |w|^{q-2}\Delta v,|\mathsf{f}|h \rangle dt & + \int_{s}^{r} \langle \rho w \cdot \nabla |w|^{q-2},|\mathsf{f}|h \rangle dt + R_q^{10} \\
&=: Z_{1}+Z_{2} + R_q^{10},
\end{align*}
where
$
R_q^{10}:=\int_s^r \langle (\nabla \rho) \cdot w|w|^{q-2},|\mathsf{f}|h \rangle dt.
$

Applying the quadratic inequality, we have
\begin{align*}
Z_1 & \leq  \frac{\alpha_{1}}{4}  \int_{s}^{r} \langle \rho |w|^{q-2}|\Delta v |^{2}\rangle dt +\frac{1}{\alpha_{1}} \int_{s}^{r} \big\langle \rho |w|^{q-2}|\mathsf{f}|^2 |h|^2 \big\rangle dt \qquad (\alpha_1>0) \\
& \text{(we are using (\ref{E0}))}\\
&\leq \frac{\varkappa \alpha_{1}}{4\varkappa -q+2}\big [I_{q}+(q-2)(\varkappa J_{q}+\frac{1}{2}I_{q}+\frac{1}{2}J_{q})\big ]+\frac{1}{\alpha_{1}} L_q + \frac{\varkappa \alpha_{1}}{4\varkappa -q+2}(R_q^8 + R_q^9),
\end{align*}
where $$L_q:=\int_{s}^{r} \big\langle \rho |w|^{q-2}|\mathsf{f}|^2 |h|^2 \big \rangle dt.$$
Next,
\begin{align*}
Z_2 &\leq (q-2)\int_{s}^{r}  \big \langle \rho |w|^{\frac{q}{2}-1}\nabla |w|, |w|^{(q-2)/2}|\mathsf{f}| |h| \big\rangle dt \qquad (\alpha_2>0)\\
&\leq (q-2)\bigg [ \alpha_{2}J_{q}+\frac{1}{4\alpha_{2}} L_q \bigg ].
\end{align*}
It remains to estimate $L_q$.
Applying inequality $ac \leq \varepsilon^p \frac{a^{p}}{p} +\varepsilon^{-p'}\frac{c^{p'}}{p'}$ ($\varepsilon>0$) with $p=q/(q-2)$, $p'=q/2$, we obtain
\begin{align*}
L_{q} &= \int_{s}^{r} \langle \rho^{\frac{q-2}{q}} |\mathsf{f}|^{2-\frac{4}{q}}|w|^{q-2}, \rho^{\frac{2}{q}} |\mathsf{f}|^{\frac{4}{q}}|h|^{2}\rangle dt \\
&\leq \frac{q-2}{q}\varepsilon^{\frac{q}{q-2}}\int_{s}^{r} \langle \rho |\mathsf{f}|^{2} |w|^{q} \rangle dt +\frac{2}{q}\varepsilon^{-\frac{q}{2}}\int_{s}^{r} \langle \rho |\mathsf{f}|^2 |h|^q\rangle  dt \\
& \text{(we are using $\mathsf{f} \in \mathbf{F}_\beta$)}\\
&\leq \frac{q-2}{q}\varepsilon^{\frac{q}{q-2}} \bigg [\beta \frac{q^{2}}{4}J_{q}+\int_{s}^{r} g_\beta(t) \langle \rho |w|^{q}\rangle dt  + \beta (R_q^6 + R_q^7)\bigg ] +\frac{2}{q}\varepsilon^{-\frac{q}{2}}\int_{s}^{r} \big\langle \rho |\mathsf{f}|^2 |h|^q \big\rangle  dt.
\end{align*}
This completes the estimate of $S_3$.

\medskip

5.~Applying 1-4 in the identity $S=S_1 +S_2 +S_3$, we obtain
\begin{align}
\label{E1}
\frac{1}{q}\langle \rho |w(r)|^{q} \rangle+ N I_{q}+M J_{q} &\leq \frac{1}{q}\langle |\nabla f|^q\rangle +   \int_{s}^{r}\bigl[ A_\delta g_\delta(t) + A_\beta g_\beta\bigr] \langle \rho |w|^{q}\rangle dt \\ \notag
& + C\int_{s}^{r}\langle \rho |\mathsf{f}|^2 |h|^q \rangle  dt + \sum_{i=4}^{10} c_i R_q^i
\end{align}
for appropriate generic constants $c_i>0$,
where 
$$
N:=1-\frac{(\gamma+\alpha_{1})\varkappa}{4\varkappa -q+2}\bigg(1+\frac{1}{2}(q-2)\bigg),
$$
\begin{align*}
M:=q-2 -(q-2)&\bigg(\nu+\alpha_{2}+\frac{\delta}{16\nu}q^{2} +\frac{q\beta \varepsilon^{\frac{q}{q-2}} }{4}(\frac{1}{\alpha_{1}}+\frac{q-2}{4\alpha_{2}})   \bigg) \\
& -\frac{\delta}{\gamma}\frac{q^{2}}{4}-\frac{(\gamma+\alpha_{1})\varkappa}{4\varkappa -q+2}(q-2)\big(\varkappa+\frac{1}{2}\big),
\end{align*}
$$
A_\delta:= \frac{q-2}{4 \nu}
+\frac{1}{\gamma}, \qquad A_\beta:= \varepsilon^{\frac{q}{q-2}}\frac{q-2}{q}\bigg(\frac{1}{\alpha_{1}}+\frac{q-2}{4\alpha_{2}}\bigg) \qquad \text{ and } \qquad 
C:=\varepsilon^{-\frac{q}{2}}\frac{2}{q}\bigg (\frac{1}{\alpha_{1}}+\frac{q-2}{4\alpha_{2}} \bigg ).
$$
We maximize $N + M$ by choosing
$$
\nu:=\frac{q\sqrt{\delta}}{4}, \quad \varkappa:=\frac{q-1}{2}, \quad \gamma:=\frac{q\sqrt{\delta}}{q-1}.
$$
Then  $N=1-\frac{q\sqrt{\delta}}{2} + o(\varepsilon,\alpha_1,\alpha_2)$, where $ o(\varepsilon,\alpha_1,\alpha_2)$ can be made as small as needed by selecting sufficiently small $\alpha_1$, $\alpha_2$ and $\varepsilon$. Due to $\sqrt{\delta}<q^{-1}$ we may and will select $\alpha_1$, $\alpha_2$ and $\varepsilon$ so that $N>0$.
Thus, we can apply in \eqref{E1} the elementary inequality $I_{q} \geq J_{q}$ arriving at
\begin{align}
\frac{1}{q}\langle \rho|w(r)|^{q} \rangle &  +  (N+M) J_q  \leq \frac{1}{q}\langle \rho|\nabla f|^{q} \rangle \label{in0} \\
& + \int_{s}^{r}G(t) \langle \rho |w|^{q}\rangle dt    + C\int_{s}^{r}\langle \rho |\mathsf{f}|^2 |h|^q \rangle  dt + \sum_{i=4}^{10} c_i R_q^i \notag
\end{align}
for all $s \leq r \leq T$, where we denoted $G(t):=A_\delta g_\delta(t) + A_\beta g_\beta(t)$.
 
Now, $$N+ M =\big( q-1-\frac{q\sqrt{\delta}}{2}(2q-3) \big) + o_2(\varepsilon,\alpha_1,\alpha_2),$$
where $o_2(\varepsilon,\alpha_1,\alpha_2)$ can be made as small as needed by selecting sufficiently small $\alpha_1$, $\alpha_2$ and $\varepsilon$. Our assumptions on $\delta$ and $q$ ensure that we can select $\alpha_1$, $\alpha_2$ and $\varepsilon$ so that in \eqref{in0}
$
N+M>0.
$

Next, applying the quadratic inequality, the form-boundedness of $\mathsf{f}$ and the inequality $|\nabla \rho| \leq \theta\sqrt{\kappa}\rho$, we can estimate the $R_q^i$ terms in \eqref{in0} by $\int_s^r\langle \rho|w(t)|^{q} \rangle dt$, $J_q$ and $\int_{s}^{r}\langle \rho |\mathsf{f}|^2 |h|^q \rangle  dt$, with coefficient $c(\kappa)$ that can be made as small as needed by selecting $\kappa$ sufficiently small. 

Thus, \eqref{in0} yields
\begin{align*}
\frac{1}{q}\langle \rho |w(r)|^{q} \rangle & + \big(N+M-c(\kappa)\big)\int_{s}^{r} \langle \rho | \nabla |w|^{\frac{q}{2}}|^{2} \rangle dt \\ & \leq \frac{1}{q}\|\nabla f\|_{L^q_\rho}^q + \int_{s}^{r}\big(G(t)+c(\kappa)\big) \langle \rho |w|^{q}\rangle dt  + \big(C+c(\kappa)\big)\|\mathsf{f} |h|^{\frac{q}{2}}\|^2_{L^2([s,r],L_\rho^2)}
\end{align*}
with $\kappa$ chosen sufficiently small so that $N+M-c(\kappa)>0$.

Finally, arguing as in the end of the proof of Proposition \ref{cl1} (selecting $r$ sufficiently close to $s$ and using the reproduction property), we obtain \eqref{req_ineq4}. The proof of Theorem \ref{prop2_} is completed.

\bigskip

\section{Some corollaries of Theorem \ref{prop2_}}

\label{reg_sect}

Let $\{b_m\}$ be a bounded smooth approximation of $b$ satisfying \eqref{b_m_1}, \eqref{b_m_2}.
In the proof of Theorem \ref{thm1} we will use

\begin{corollary}
\label{prop1}
In the assumptions of Theorem \ref{prop2_}, let $v \equiv v_{m}$ be the solution to Cauchy problem
\begin{equation*}
(\partial_t - \Delta + b_m \cdot \nabla)v=|\mathsf{f}|h, \quad v(s,\cdot)=0. 
\end{equation*}
For every $q \in ]d,\delta^{-\frac{1}{2}}[$, there exists a generic constant $C_0$ such that
$$
\|v\|_{L^\infty([s,r] \times \mathbb R^d)} 
\leq C_0\|\mathsf{f} |h|^{\frac{q}{2}}\|^{\frac{2}{q}}_{L^2([s,r],L^2)}
$$
for all $0 \leq s \leq r \leq T$.
\end{corollary}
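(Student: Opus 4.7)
The strategy is to upgrade the weighted $L^q$ bounds of Theorem \ref{prop2_} to a pointwise $L^\infty$ bound by exploiting the Sobolev/Morrey embedding $W^{1,q}\hookrightarrow C^{0,1-d/q}$ (valid since $q>d$), combined with the freedom to translate the weight $\rho$.

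First, I would apply Theorem \ref{prop2_} not with the original $\rho$, but with the translated weight
$$
\rho_{x_0}(y):=(1+\kappa|y-x_0|^2)^{-\theta}, \qquad x_0\in\mathbb R^d \text{ arbitrary}.
$$
The pointwise inequality $|\nabla \rho_{x_0}|\leq\theta\sqrt{\kappa}\,\rho_{x_0}$, which is the only structural property of $\rho$ used in the proof of Proposition \ref{cl1} and Theorem \ref{prop2_}, is translation invariant; the form-bounds of $b_m$ and $\mathsf{f}$, and their associated functions $g_\delta$, $g_\beta$, are likewise translation invariant. Consequently the generic constants $C$ and $\kappa$ of Theorem \ref{prop2_} do not depend on $x_0$. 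Since $f=0$, the conclusion becomes
$$
\|v\|^q_{L^\infty([s,r],L_{\rho_{x_0}}^q)} + \|\nabla v\|^q_{L^\infty([s,r],L_{\rho_{x_0}}^q)} \leq C\,\|\mathsf{f}\,|h|^{q/2}\|^2_{L^2([s,r],L_{\rho_{x_0}}^2)},
$$
with $C$ independent of $x_0$.

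Second, since $q>d$, Morrey's inequality on the unit ball $B(x_0,1)$ yields a generic constant $C_S$ with
$$
|v(t,x_0)|\leq C_S\bigl(\|v(t,\cdot)\|_{L^q(B(x_0,1))}+\|\nabla v(t,\cdot)\|_{L^q(B(x_0,1))}\bigr),\qquad t\in[s,r].
$$
On $B(x_0,1)$ one has $\rho_{x_0}\geq(1+\kappa)^{-\theta}$, so each unweighted $L^q(B(x_0,1))$ norm is controlled by the corresponding $L_{\rho_{x_0}}^q$ norm (with constant independent of $x_0$). On the right-hand side of the estimate from Theorem \ref{prop2_}, I use $\rho_{x_0}\leq 1$ to pass from the weighted norm $\|\mathsf{f}\,|h|^{q/2}\|_{L^2 L_{\rho_{x_0}}^2}$ to the unweighted one $\|\mathsf{f}\,|h|^{q/2}\|_{L^2([s,r]\times\mathbb R^d)}$.

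Combining the two displays,
$$
|v(t,x_0)|\leq C_0\,\|\mathsf{f}\,|h|^{q/2}\|^{2/q}_{L^2([s,r]\times\mathbb R^d)}
$$
with $C_0$ independent of $t\in[s,r]$ and of $x_0\in\mathbb R^d$. Taking the supremum over $(t,x_0)$ gives the claim. The only subtle point, and the one I would check carefully, is the claimed uniformity in $x_0$ of the constants in Theorem \ref{prop2_}; but this is immediate once one notes that every inequality in its proof is formulated in terms of $\rho_{x_0}$-weighted integrals and depends on $\rho_{x_0}$ only through the translation-invariant bound $|\nabla\rho_{x_0}|\leq\theta\sqrt{\kappa}\rho_{x_0}$.
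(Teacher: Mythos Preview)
Your argument is correct, but the paper takes a shorter route. Rather than translating the weight and localizing, the paper simply reruns the proof of Theorem \ref{prop2_} with the trivial weight $\rho\equiv 1$. In that case every remainder term $R_q^i$ (all of which arise from $\nabla\rho$) vanishes identically, and one obtains directly the unweighted estimate
\[
\|v\|_{L^\infty([s,r],L^q)}^q + \|\nabla v\|_{L^\infty([s,r],L^q)}^q + \|\nabla|\nabla v|^{q/2}\|_{L^2([s,r],L^2)}^2 \le C\,\|\mathsf{f}\,|h|^{q/2}\|_{L^2([s,r],L^2)}^2,
\]
after which a single global Sobolev embedding $W^{1,q}(\mathbb R^d)\hookrightarrow L^\infty(\mathbb R^d)$ (using $q>d$) finishes the proof.

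Your approach has the virtue of treating Theorem \ref{prop2_} as a black box (modulo the translation-invariance observation), whereas the paper's approach requires revisiting its proof. On the other hand, the translated-weight trick you use is precisely the device the paper reserves for Corollary \ref{prop2}, where $h\equiv 1$ has no decay and the weight cannot be dispensed with; here, since $h\in C([0,T],\mathcal S)$, the unweighted norms are already finite and the simpler $\rho\equiv 1$ argument is available.
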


\begin{proof}
Repeating the proof of Theorem \ref{prop2_} with $\rho \equiv 1$ (in which case all terms $R_q^i$ disappear), we obtain (taking into account that the initial function $f$ here is $0$)
\begin{align*}
\|v\|_{L^\infty([s,r],L^q)}^q + \|\nabla v\|^q_{L^\infty([s,r],L^q)}  + \|\nabla|\nabla v|^{\frac{q}{2}}\|_{L^2([s,r],L^2)}^2 \leq C\|\mathsf{f} |h|^{\frac{q}{2}}\|^2_{L^2([s,r],L^2)}.
\end{align*}
Now, applying the Sobolev Embedding Theorem to the second term in the LHS, we obtain the required.
\end{proof}

\begin{corollary}
\label{cor1}
Let $d \geq 3$, $b \in \mathbf{F}_\delta$ with $\delta<d^{-2}$. Let $v=v_{k,m,n}$ be the solution to 
$$
(\partial_t - \Delta + b_m \cdot \nabla)v=|b_k-b_n|h, \quad v(s,\cdot)=0,
$$
where $h \in C([0,T], \mathcal S)$.
For every $q \in ]d,\delta^{-\frac{1}{2}}[$ there exists a generic constant $C_0$ such that
$$
\|v\|_{L^\infty([s,r] \times \mathbb R^d)} 
\leq C_0\|(b_k-b_n) |h|^{\frac{q}{2}}\|^{\frac{2}{q}}_{L^2([s,r],L^2)}
$$
for all $0 \leq s \leq r \leq T$.
\end{corollary}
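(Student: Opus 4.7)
The corollary is essentially a repackaging of Corollary \ref{prop1} with the forcing term $\mathsf{f}$ specialized to $b_k - b_n$. The only nontrivial point is verifying that such a choice of $\mathsf{f}$ satisfies the hypotheses of Corollary \ref{prop1} \emph{uniformly in the indices} $k, n$, which is what makes the conclusion useful (the constant $C_0$ must not depend on $k, n$).

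First I would observe that by the construction of $\{b_m\}$ in Section \ref{rem_b_m}, every $b_m$ lies in $\mathbf{F}_\delta$ with one and the same function $g_\delta \in L^1_{\loc}([0,\infty[)$. Using the triangle inequality in $L^2$, for any $\varphi \in C_c^\infty([0,\infty[\times\mathbb R^d)$,
\begin{align*}
\int_0^\infty \|(b_k-b_n)(t,\cdot)\varphi(t,\cdot)\|_2^2\,dt & \leq 2\int_0^\infty \|b_k\varphi\|_2^2\,dt + 2\int_0^\infty \|b_n\varphi\|_2^2\,dt \\
& \leq 4\delta\int_0^\infty \|\nabla\varphi\|_2^2\,dt + 4\int_0^\infty g_\delta(t)\|\varphi\|_2^2\,dt,
\end{align*}
so $b_k - b_n \in \mathbf{F}_{\beta}$ with $\beta := 4\delta < \infty$ and $g_\beta := 4 g_\delta$. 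Both $\beta$ and $g_\beta$ are independent of $k$ and $n$. (Equivalently, one may invoke the sum rule $\sqrt{\beta} = \sqrt{\delta}+\sqrt{\delta}$ stated in the first Example.) Moreover, since $b_k$ and $b_n$ are each bounded and $C^\infty$-smooth, so is $\mathsf{f} := b_k - b_n$.

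With these verifications in hand I would then simply apply Corollary \ref{prop1} with $\mathsf{f} = b_k - b_n$ (and with $b_m$ as drift, $v(s,\cdot) = 0$). The conclusion gives
$$
\|v\|_{L^\infty([s,r]\times\mathbb R^d)} \leq C_0 \|(b_k - b_n)|h|^{\frac{q}{2}}\|^{\frac{2}{q}}_{L^2([s,r],L^2)}
$$
with a constant $C_0$ depending only on the generic data $d, q, \delta, g_\delta, \beta, g_\beta, T$. Since $\beta = 4\delta$ and $g_\beta = 4 g_\delta$ are determined by $\delta$ and $g_\delta$, the constant $C_0$ is indeed generic and, in particular, independent of $k, n, m$. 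This yields the required bound.

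There is no real obstacle: the entire content of the corollary is that the estimate of Corollary \ref{prop1} is \emph{stable} under taking $\mathsf{f}$ to be a difference of members of the approximating sequence. The only subtlety worth flagging is precisely this uniformity of the form-bound data across the approximation, which is guaranteed by the explicit construction of $\{b_m\}$ in Section \ref{rem_b_m}; without that uniformity, the constant $C_0$ would degenerate as $k, n \to \infty$, defeating the purpose of the estimate (which, in the proof of Theorem \ref{thm1}, will be used to extract Cauchyness of $\{v_m\}$ in the sup-norm).
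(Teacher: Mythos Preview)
Your proof is correct and follows exactly the paper's approach: the paper's entire proof is the one-liner ``$\mathsf{f}:=b_k-b_n \in \mathbf{F}_{4\delta}$ in Corollary \ref{prop1},'' which is precisely what you do, with the additional (useful) verification that the form-bound data are uniform in $k,n$.
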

\begin{proof}
$\mathsf{f}:=b_k-b_n \in \mathbf{F}_{4\delta}$ in Corollary \ref{prop1}.
\end{proof}

\begin{corollary}
\label{prop_F}
In the assumptions of Corollary \ref{cor1}, let $v \equiv v_m$ be the solution to 
$$
(\partial_t - \Delta + b_m \cdot \nabla)v=h, \quad v(s,\cdot)=0, \quad  h \in C^\infty_c([0,T] \times \mathbb R^d).
$$
For every $q \in ]d,\delta^{-\frac{1}{2}}[$ there exists a generic constant $C$ such that
\begin{align*}
\|v\|^q_{L^\infty([s,r],L^q)} + \|\nabla v\|^q_{L^\infty([s,r],L^q)}  + \|\nabla|\nabla v|^{\frac{q}{2}}\|^2_{L^2([s,r],L^2)} \leq C\|h\|^q_{L^q([s,r],L^q)},
\end{align*}
for all $0 \leq s \leq r \leq T$. 
\end{corollary}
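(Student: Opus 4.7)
The plan is to deduce Corollary \ref{prop_F} as an immediate specialization of Theorem \ref{prop2_}, obtained by choosing the source function to be the constant $\mathsf{f}\equiv 1$ and running the argument with the trivial weight $\rho\equiv 1$, exactly as was done to prove Corollary \ref{prop1}.

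First I would verify that $\mathsf{f}\equiv 1$ fits the hypothesis $\mathsf{f}\in\mathbf{F}_\beta$: the inequality
$$
\int_0^\infty\|\varphi(t,\cdot)\|_2^2\,dt\leq \beta\int_0^\infty\|\nabla\varphi(t,\cdot)\|_2^2\,dt+\int_0^\infty g_\beta(t)\|\varphi(t,\cdot)\|_2^2\,dt
$$
holds, e.g.\,with any $\beta>0$ and $g_\beta\equiv 1\in L^1_{\loc}([0,\infty[)$, and this $g_\beta$ is admissible in the definition of a generic constant. Thus $\mathsf{f}\equiv 1\in\mathbf{F}_\beta$, bounded and smooth, so all assumptions of Theorem \ref{prop2_} (via Proposition \ref{cl1}) are satisfied for the equation $(\partial_t-\Delta+b_m\cdot\nabla)v=|\mathsf{f}|h=h$ with initial datum $f=0$.

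Next, I would repeat the proof of Theorem \ref{prop2_} verbatim with $\rho\equiv 1$, observing that then $\nabla\rho\equiv 0$, so all remainder terms $R_q^i$ that were controlled by the quantity $c(\kappa)$ drop out identically; this is precisely the simplification noted in the proof of Corollary \ref{prop1}. The principal inequality reduces to
$$
\tfrac{1}{q}\langle|w(r)|^q\rangle+(N+M)\,J_q\leq \tfrac{1}{q}\langle|\nabla f|^q\rangle+\int_s^r G(t)\langle|w|^q\rangle\,dt+C\int_s^r\langle|h|^q\rangle\,dt,
$$
together with its $L^\infty_tL^q_x$ counterpart from Proposition \ref{cl1}. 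With $f=0$ this gives, after the same Gronwall/reproduction step used at the end of the proof of Theorem \ref{prop2_},
$$
\|v\|^q_{L^\infty([s,r],L^q)}+\|\nabla v\|^q_{L^\infty([s,r],L^q)}+\|\nabla|\nabla v|^{q/2}\|^2_{L^2([s,r],L^2)}\leq C\,\|\,|h|^{q/2}\,\|^2_{L^2([s,r],L^2)}.
$$
Finally, I would rewrite the right-hand side using the identity $\|\,|h|^{q/2}\|^2_{L^2([s,r]\times\mathbb R^d)}=\|h\|^q_{L^q([s,r]\times\mathbb R^d)}$, which is exactly the desired bound.

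There is no substantive obstacle here: the whole content is bookkeeping, namely checking that $\mathsf{f}\equiv 1$ is admissible in the sense of Theorem \ref{prop2_} and that replacing the weighted measure $\rho\,dx$ by Lebesgue measure only simplifies the argument. The constant $C$ is generic because it depends only on $d$, $q$, $\delta$, $g_\delta$, the admissible $\beta$ and $g_\beta\equiv 1$, and $T$.
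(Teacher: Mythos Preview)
Your proposal is correct and matches the paper's own proof essentially verbatim: the paper's argument is the single line ``We repeat the proof of Theorem \ref{prop2_} with $\rho\equiv 1$, $\mathsf{f}\equiv 1$ and the initial function $f=0$,'' and you have spelled out precisely this specialization, including the observation that the $R_q^i$ terms vanish and the identification $\||h|^{q/2}\|_{L^2}^2=\|h\|_{L^q}^q$.
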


\begin{proof}
We repeat the proof of Theorem \ref{prop2_} with  $\rho \equiv 1$, $\mathsf{f} \equiv 1$ and the initial function $f=0$.
\end{proof}

\begin{corollary}
\label{prop2}
In the assumptions of Corollary \ref{cor1}, let $v=v_{k,m}$ be the solution to
\begin{equation*}
(\partial_t - \Delta + b_m \cdot \nabla)v=|b_k|, \quad v(s,\cdot)=0.
\end{equation*}
For every $q \in ]d,\delta^{-\frac{1}{2}}[$ there exist generic constants $C_1$, $C_2$ and $\kappa$ such that
$$
\|\rho^{\frac{1}{q}} v\|_{L^\infty([s,r] \times \mathbb R^d)} 
\leq C_1\|b_k \sqrt{\rho}\|_{L^2([s,r],L^2)},
$$
$$
\|v\|_{L^\infty([s,r] \times \mathbb R^d)} 
\leq C_2\sup_{z \in \mathbb Z^d}\|b_k \sqrt{\rho_{z}}\|_{L^2([s,r],L^2)}, \quad \rho_z(x):=\rho(x-z),
$$
for all $0 \leq s \leq r \leq T$.
\end{corollary}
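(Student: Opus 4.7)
The plan is to specialize Theorem \ref{prop2_} to the source term $|b_k|$ and then promote the weighted $L^\infty_t L^q_\rho$ control to a pointwise bound via the Sobolev--Morrey embedding, which is available because $q>d$. Specifically, I would take $\mathsf{f}:=b_k$ (which lies in $\mathbf{F}_\delta$ with the same $g_\delta$), $f:=0$, and approximate $h\equiv 1$ by Schwartz functions $h_R(x):=\psi(x/R)$ for a fixed $\psi\in C_c^\infty(\mathbb R^d)$ with $\psi\equiv 1$ on $B(0,1)$, so that $h_R\in C([0,T],\mathcal S)$ and $h_R\uparrow 1$. Theorem \ref{prop2_} then delivers a generic $C$ independent of $R$ such that
\[
\|v_R\|^q_{L^\infty([s,r],L_\rho^q)}+\|\nabla v_R\|^q_{L^\infty([s,r],L_\rho^q)}\leq C\,\|b_k\,|h_R|^{q/2}\|^2_{L^2([s,r],L_\rho^2)}.
\]
Passing to the limit $R\to\infty$ (dominated convergence on the right, since $b_k$ is bounded with compact support, and a routine convergence argument for the linear Cauchy problem on the left) yields the analogous bound for $v$ with right-hand side controlled by $\|b_k\sqrt\rho\|^2_{L^2([s,r]\times\mathbb R^d)}$.

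Next, I would apply the Sobolev--Morrey embedding $W^{1,q}(\mathbb R^d)\hookrightarrow L^\infty(\mathbb R^d)$ to the function $u(x):=\rho(x)^{1/q}v(t,x)$. Using $|\nabla\rho|\leq\theta\sqrt\kappa\,\rho$ from \eqref{two_est}, the product rule $\nabla u=\rho^{1/q}\nabla v+\tfrac{1}{q}\rho^{1/q-1}(\nabla\rho)v$ gives $\|\nabla u\|_{L^q_x}\leq\|\nabla v(t,\cdot)\|_{L^q_\rho}+\tfrac{\theta\sqrt\kappa}{q}\|v(t,\cdot)\|_{L^q_\rho}$, so
\[
\|\rho^{1/q}v(t,\cdot)\|_{L^\infty_x}\leq C_{\mathrm{Mor}}\bigl(\|v(t,\cdot)\|_{L^q_\rho}+\|\nabla v(t,\cdot)\|_{L^q_\rho}\bigr).
\]
Taking the supremum in $t\in[s,r]$ and inserting the bound from the previous step produces the first claimed inequality, with $\kappa$ selected as in Theorem \ref{prop2_}.

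For the second inequality I exploit translation invariance: replacing $\rho$ by $\rho_z(x):=\rho(x-z)$ preserves the gradient estimate $|\nabla\rho_z|\leq\theta\sqrt\kappa\,\rho_z$, so the same reasoning yields $\|\rho_z^{1/q}v\|_{L^\infty([s,r]\times\mathbb R^d)}\leq C_1\|b_k\sqrt{\rho_z}\|_{L^2([s,r]\times\mathbb R^d)}$ with a constant $C_1$ independent of $z\in\mathbb Z^d$. Given $x\in\mathbb R^d$, pick $z=z(x)\in\mathbb Z^d$ with $|x-z|\leq\sqrt d/2$; then $\rho_z(x)\geq\rho_0:=(1+\kappa d/4)^{-\theta}>0$, whence
\[
|v(t,x)|\leq\rho_z(x)^{-1/q}\|\rho_z^{1/q}v\|_{L^\infty}\leq\rho_0^{-1/q}C_1\sup_{z\in\mathbb Z^d}\|b_k\sqrt{\rho_z}\|_{L^2([s,r]\times\mathbb R^d)},
\]
which is the second estimate with $C_2:=\rho_0^{-1/q}C_1$. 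The main obstacle is the approximation $h_R\to 1$: Theorem \ref{prop2_} requires $h\in C([0,T],\mathcal S)$, so the weighted energy estimate must be shown to survive the limit. This is routine by linearity of the equation and lower semicontinuity of the $L^\infty_t L^q_\rho$ norm under the natural modes of convergence of $v_R\to v$, but it is the step most sensitive to the functional-analytic setup in which the underlying Cauchy problem is solved and hence deserves the most care.
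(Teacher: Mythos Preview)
Your argument is correct and follows the same route as the paper: obtain the weighted estimate $\|v\|^q_{L^\infty L^q_\rho}+\|\nabla v\|^q_{L^\infty L^q_\rho}\leq C\|b_k\|^2_{L^2 L^2_\rho}$ from Theorem~\ref{prop2_} with $\mathsf f=b_k$, then use the product rule together with \eqref{two_est} and the Sobolev embedding ($q>d$) to pass to $\|\rho^{1/q}v\|_{L^\infty}$, and finally translate to get the second bound. The only difference is cosmetic: the paper simply re-runs the \emph{proof} of Theorem~\ref{prop2_} with $h\equiv 1$ (nothing in that argument actually uses $h\in\mathcal S$, only boundedness of $h$), whereas you keep Theorem~\ref{prop2_} as a black box and approximate $h\equiv 1$ by cutoffs $h_R$; this buys you nothing extra and costs you the limiting step you flag at the end. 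One small slip: $b_k$ is smooth and bounded but in general \emph{not} compactly supported (the construction in Section~\ref{rem_b_m} applies a heat semigroup), so your justification of dominated convergence on the right should instead invoke $|b_k|$ bounded and $\rho\in L^1$ (which holds since $\theta>d/2$).
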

\begin{proof}
Repeating the proof of Theorem \ref{prop2_} with $\mathsf{f}=b_k$ and $h \equiv 1$, we obtain
$$
\|v\|^q_{L^\infty([s,r],L_\rho^q)} + \|\nabla v\|^q_{L^\infty([s,r],L_\rho^q)} \leq C \|b_k \|^2_{L^2([s,r],L_\rho^2)}.
$$
By \eqref{two_est},
\begin{align*}
\|\nabla v\|_{L^\infty([s,r],L_\rho^q)} & = \sup_{[s,r]}\langle \rho |\nabla v|^q\rangle^{\frac{1}{q}} = \sup_{[s,r]}\big\langle \big|\nabla \big(\rho^{\frac{1}{q}}v\big) - \frac{1}{q} \rho^{\frac{1}{q}-1}(\nabla \rho)v\big|^q\big\rangle^{\frac{1}{q}} \\
& \geq \|\nabla \big(\rho^{\frac{1}{q}}v\big)\|_q - \frac{1}{q}\theta\sqrt{\kappa}\|\rho^{\frac{1}{q}}v\|_q,
\end{align*}
so applying the Sobolev Embedding Theorem to the first term, we obtain the first inequality. The second inequality follows  from the first one using translations.
\end{proof}

\begin{corollary}
\label{prop_grad_f}
In the assumptions of Corollary \ref{cor1}, let $u \equiv u_m$ be the solution to 
$$
(\partial_t - \Delta + b_m \cdot \nabla)u=0, \quad u(s,\cdot)=f.
$$
For every $q \in ]d,\delta^{-\frac{1}{2}}[$ there exist generic constants $C$ and $\kappa$ such that
\begin{align*}
\|u\|^q_{L^\infty([s,r],L^q_\rho)} + \|\nabla u\|^q_{L^\infty([s,r],L^q_\rho)} & + \|\nabla |\nabla u|^{\frac{q}{2}}\|^2_{L^2([s,r],L^2_\rho)} \\
& \leq C\bigl(\|\nabla f\|^q_{L^q_\rho} + \|f\|^q_{L^q_\rho}\bigr)
\end{align*}
and 
$$
\|\rho^{\frac{1}{q}} u\|_{L^\infty([s,r] \times \mathbb R^d)} 
\leq C\bigl(\|\nabla f\|_{L^q_\rho} + \|f\|_{L^q_\rho}\bigr).
$$
for all $0 \leq s \leq r \leq T$.
\end{corollary}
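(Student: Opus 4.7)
The plan is to derive both inequalities as immediate specializations of Theorem \ref{prop2_}, supplemented by the weighted-Sobolev trick already used in the proof of Corollary \ref{prop2}. The homogeneous Cauchy problem $(\partial_t-\Delta+b_m\cdot\nabla)u=0$, $u(s,\cdot)=f$, is exactly \eqref{Cauchy3} with inhomogeneous term $|\mathsf{f}|h\equiv 0$ and initial datum $f$. I would therefore take $\mathsf{f}\equiv 0$, which lies trivially in $\mathbf{F}_\beta$ for any $\beta<\infty$ and is bounded and smooth, together with any admissible $h$. Theorem \ref{prop2_} then yields, for generic constants $C$ and $\kappa$,
\begin{equation*}
\|u\|^q_{L^\infty([s,r],L^q_\rho)} + \|\nabla u\|^q_{L^\infty([s,r],L^q_\rho)} + \|\nabla|\nabla u|^{q/2}\|^2_{L^2([s,r],L^2_\rho)} \leq C\bigl(\|\nabla f\|^q_{L^q_\rho} + \|f\|^q_{L^q_\rho}\bigr),
\end{equation*}
which is the first asserted estimate.

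For the pointwise bound I would mimic the second half of the proof of Corollary \ref{prop2}. The product rule gives $\nabla(\rho^{1/q}u)=\rho^{1/q}\nabla u+\frac{1}{q}\rho^{1/q-1}(\nabla\rho)u$, whence, invoking $|\nabla\rho|\leq\theta\sqrt{\kappa}\rho$ from \eqref{two_est},
\begin{equation*}
\|\nabla(\rho^{1/q}u(t,\cdot))\|_q \leq \|\nabla u(t,\cdot)\|_{L^q_\rho} + \tfrac{\theta\sqrt{\kappa}}{q}\|u(t,\cdot)\|_{L^q_\rho}, \qquad t\in[s,r].
\end{equation*}
Since $q>d$, the Morrey--Sobolev embedding $W^{1,q}(\mathbb R^d)\hookrightarrow L^\infty(\mathbb R^d)$ yields
\begin{equation*}
\|\rho^{1/q}u(t,\cdot)\|_\infty \leq C'\bigl(\|\rho^{1/q}u(t,\cdot)\|_q+\|\nabla(\rho^{1/q}u(t,\cdot))\|_q\bigr).
\end{equation*}
Taking the supremum over $t\in[s,r]$ and substituting the first estimate produces the desired pointwise bound.

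No substantive obstacle arises, since both inequalities are genuine corollaries of Theorem \ref{prop2_}. The only points to verify are that the choice $\mathsf{f}\equiv 0$ is legitimate --- it is, because the hypothesis $\mathsf{f}\in\mathbf{F}_\beta$ is used in the proof of Theorem \ref{prop2_} only to absorb $\int\rho|\mathsf{f}|^2|w|^q\,dt$-type contributions that now vanish identically --- and that a single sufficiently small $\kappa$ can be fixed so that both estimates hold simultaneously, which is automatic.
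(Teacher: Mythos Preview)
Your proof is correct and matches the paper's own argument essentially verbatim: the paper simply states that the first estimate is Theorem \ref{prop2_} with $\mathsf{f}=0$, and that the second follows from the first via the Sobolev Embedding Theorem, arguing as in the proof of Corollary \ref{prop2}. Your additional remarks about the legitimacy of $\mathsf{f}\equiv 0$ and the uniformity of $\kappa$ are fine but not required.
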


\begin{proof}
The first estimate is Theorem \ref{prop2_} with $\mathsf{f}=0$. The second estimate follows from the first one using the Sobolev Embedding Theorem and arguing as in the proof of Corollary \ref{prop2}.
\end{proof}

\bigskip

\section{Proof of Theorem \ref{thm1}(\textit{i})}

\label{proof_i_sect}

The convergence result of Theorem \ref{thm1}(\textit{i}) was proved in \cite{Ki}. The method of proof
 is the parabolic variant of the iteration procedure of Kovalenko-Sem\"{e}nov \cite{KS}.
For reader's convenience we outline the proof below.

\begin{definition}
\label{def_fel}
A Feller evolution family is a family of linear operators $\{U^{t,s}\}_{0 \leq s \leq t \leq T} \subset \mathcal B(C_\infty)$ such that

1) $U^{t,r}U^{r,s}=U^{t,s}$ $(r \in [s,t])$, 

2) $U^{s,s}={\rm Id}$,  

3) $\|U^{t,s}f\|_\infty \leq \|f\|_\infty$, $U^{t,s}[C_\infty^+] \subset C_\infty^+,$

4) $
U^{r,s}=s\mbox{-}C_\infty\mbox{-}\lim_{t \downarrow r}U^{t,s} \;\;(r \geq s).
$
\end{definition}

By a standard result, if $b$ is bounded and smooth, then, given an initial function $f \in C_\infty$, there exists a unique classical solution $u$ to Cauchy problem (${\rm CP}_{b}$), and the operators
$$
U^{t,s}f(\cdot):=u(t,\cdot), \quad 0 \leq s \leq t \leq T,
$$
constitute a Feller evolution family.

\medskip

Now, let $b \in \mathbf{F}_\delta$, $\delta<d^{-2}$ and let $b_m$, $m=1,2,\dots$ be a bounded smooth  approximation of $b$ satisfying \eqref{b_m_1}, \eqref{b_m_2}. Let $U^{t,s}_m \equiv U^{t,s}(b_m)$ be the Feller evolution family determined by (${\rm CP}_{b_m}$). Let $f \in C_c^\infty$,  $u_m(t,\cdot):=U_m^{t,s}f(\cdot)$. We write $U_m=U_m^{t,s}$ if no ambiguity arises.

\medskip

\textbf{1}.~Subtracting the equations for $u_m$, $u_n$ and setting $h:=u_m-u_n$, we obtain
\begin{equation}
\label{eq_h}
\partial_t h - \Delta h + b_m \cdot \nabla h + (b_m-b_n) \cdot \nabla u_n=0, \quad h(s,\cdot)=0.
\end{equation}
Multiplying the last equation by $h|h|^{p-2}$, $p>\frac{2}{2-\sqrt{\delta}}$, integrating and
applying the quadratic inequality and $b \in \mathbf{F}_\delta$ (we only need $\delta<4$ at this step), we arrive at \cite[Lemma 2]{Ki}:

\textit{For every $0 < \alpha < 1$
there exist $\theta>0$, $k=k(\delta)>1$ and a $m_0$ such that for all $m,n \geq m_0$,
for all $p \geq p_0>\frac{2}{2-\sqrt{\delta}}$ we have
\begin{multline}
\label{iterineq}
\|u_m-u_n\|_{L^{\frac{p}{1-\alpha}}([s,s+\theta],L^{\frac{pd}{d-2+2\alpha}})}\\ \leq \left(C_0 \delta \|\nabla u_m\|^2_{L^{2\lambda'}([s,s+\theta],L^{2\sigma'}(\mathbb R^d))}\right)^{\frac{1}{p}}(p^{2k})^{\frac{1}{p}}\|u_m-u_n\|_{L^{(p-2)\lambda}([s,s+\theta],L^{(p-2)\sigma})}^{1-\frac{2}{p}},
\end{multline}
for any $\sigma$ such that $$1<\sigma<\frac{d}{d-2+2\alpha}, \quad \frac{1}{\sigma}+\frac{1}{\sigma'}=1,$$ and $$\frac{1/(1-\alpha)}{\lambda}=\frac{d/(d-2+2\alpha)}{\sigma}, \quad \frac{1}{\lambda}+\frac{1}{\lambda'}=1,$$ for a constant $C_0=C_0(\theta)$ that does not depend on $m$ or $s$.}

\medskip

\textbf{2}. Since $\delta<d^{-2}$, we can apply \cite[Lemma 1]{Ki} (or Corollary \ref{prop_grad_f} where we apply the Sobolev Embedding Theorem to the term
 $\|\nabla |\nabla u_m|^{\frac{q}{2}}\|^2_{L^2([s,r],L^2)}$) to obtain
$$
\|\nabla u_m\|^q_{L^\infty([s,r],L^q)} + \|\nabla u_m\|^q_{L^q([s,r],L^{\frac{qd}{d-2}})} \leq C \|\nabla f\|^q_q.
$$
Applying Young's inequality in the last estimate, one can bound the factor 
$\|\nabla u_m\|_{L^{2\lambda'}([s,s+\theta],L^{2\sigma'}(\mathbb R^d))}$ in inequality \eqref{iterineq} by $C_1\|\nabla f\|_q$. Now, one can iterate the resulting inequality. Set $$D_T=\{(s,t) \mid 0 \leq s \leq t \leq T\}, \qquad D_{T,\,\theta}:=D_{T} \cap \{(s,t) \mid 0 \leq t-s \leq \theta\}, \quad \theta<T.$$
We have \cite[Lemma 3]{Ki}:

\textit{In the assumptions of Theorem \ref{thm1}, 
for any $ p_0>\frac{2}{2-\sqrt{\delta}}$ there exist $\theta>0$, constants $B<\infty$ and $\gamma:=
\bigl( 1-\frac{\sigma d }{d+2}\bigr)\bigl(1-\frac{\sigma d }{d+2}+\frac{2\sigma}{p_0} \bigr)^{-1}
>0$ {\rm(}$1<\sigma<\frac{d+2}{d}${\rm)} independent of $m,n$ such that
\begin{equation}
\label{iterineq0}
\|U_mf-U_nf\|_{L^\infty(D_{T,\,\theta} \times \mathbb R^d)}  \leq B \sup_{0 \leq s \leq T-\theta}\|U_mf-U_nf\|^\gamma_{L^{p_0}([s,s+\theta], L^{p_0})} \quad \text{ for all } n,m
\end{equation}
}
(the fact that $\gamma$ is strictly positive is the main concern of the iteration procedure).

\medskip
 
\textbf{3}.~That $\{U_mf\}$ indeed converges in $L^{p_0}([s,s+\theta], L^{p_0})$ is the content of \cite[Lemma 4]{Ki}. Its proof consists of subtracting the equations for $u_m$, $u_n$ and using quadratic inequality and $b \in \mathbf{F}_\delta$ to show that the sequence $\{U_mf\}$ is fundamental in $L^\infty(D_T, L^2)$ and hence, since $U^{t,s}_m$ are $L^\infty$ contractions, in $L^\infty(D_T, L^p)$ for all $2 \leq p <\infty$.

Combining Steps 2 and 3, we obtain the convergence result of Theorem \ref{thm1}(\textit{i}) first on $D_{T,\theta}$ and then, using the reproduction property of $U_m^{t,s}$, on $D_T$:
$$
\exists\; s\mbox{-}C_\infty\mbox{-}\lim_{m \rightarrow \infty}U_m^{t,s}=:U^{t,s}f \quad \text{(loc.\,uniformly in $(s,t) \in D_T$)}, \quad f \in C_c^\infty.
$$ 
Since $U^{t,s}_m$ are (positivity preserving) $L^\infty$ contractions, so is $U^{t,s}$. Now a density argument allows to extend $U^{t,s}$ to $C_\infty$. The convergence result of Theorem \ref{thm1}(\textit{i}) follows.

\medskip

The fact that $U^{t,s}f$, $f \in C_\infty$ delivers a weak solution to \eqref{CP} is the content of \cite[Proposition 1]{Ki}.

The uniqueness of the weak solution in class $C([0,\infty[,L^2)$ was proved (in greater generality) in \cite{Se}.

\bigskip

\section{Proof of Theorem \ref{thm1}(\textit{ii})}

Having at hand Theorem \ref{thm1}(\textit{i}) and Corollaries \ref{prop1}-\ref{prop_grad_f}, we will give two proofs. Both of them use rather standard arguments.

The first proof uses the fact that we have at our disposal a backward Feller evolution family $P^{t,r}$, $0 \leq t \leq r \leq T$ (see below) and follows \cite{KiS}. 

The second proof uses a tightness argument, as in \cite{ZZ,Zh,RZ}. It yields, for every $x \in \mathbb R^d$, a weak solution $\mathbb P_x$ to \eqref{seq}. In view of the convergence result in Theorem \ref{thm1}(\textit{i}), $\mathbb P_x$, $x \in \mathbb R^d$  coincide with the probability measures determined by $P^{0,r}$.

Recall that by $X^m_t=X^m_{t,x}$, $m=1,2,\dots$ we denote the strong solution to the stochastic equation
$$
X^m_t=x-\int_0^t b_m(r,X^m_r)dr + \sqrt{2}W_t, \quad x \in \mathbb R^d.
$$

We will need

\begin{definition}
\label{def_fel2}
A backward Feller evolution family $\{P^{t,r}\}_{0 \leq t \leq r \leq T} \subset \mathcal B(C_\infty)$ satisfies 

$1^\circ$)  $P^{t,s}P^{s,r}=P^{t,r}$ ($s \in [t,r]$).

$2^\circ$) $P^{r,r}={\rm Id}$,  

$3^\circ$) $\|P^{t,r}f\|_\infty \leq \|f\|_\infty$, $P^{t,r}[C_\infty^+] \subset C_\infty^+,$

$4^\circ$) $
P^{t,r}=s\mbox{-}C_\infty\mbox{-}\lim_{t \uparrow s}P^{s,r} \;\;(r \geq s).
$

\end{definition}

The terminal-value problem
\begin{align}
\label{FP}
\tag{${\rm TP}_{b}$}
\left\{
\begin{array}{l}
\bigl(\partial_t + \Delta + b(t,x)\cdot \nabla \bigr)w=0, \qquad 0 \leq t \leq r \leq T, \notag \\[2mm]
 u(r,\cdot)=f(\cdot)
\end{array}
\right.
\end{align}
determines a backward Feller evolution family: $P^{t,r}f(\cdot):=w(t,\cdot)$.

We have
\begin{equation*}
P^{t,r}(b)=U^{T-t,T-r}(\tilde{b}), \quad \tilde{b}(t,x)=b(T-t,x),
\end{equation*}
where $U^{t,s}$ is the Feller evolution family for (${\rm CP}_{b}$).

\begin{definition} 
\label{mart_sol}
A probability measure $\mathbb P_x$, $x \in \mathbb R^d$ on $(C([0,T],\mathbb R^d),\mathcal B_t=\sigma(\omega_r \mid 0\leq r \leq t))$, where $\omega_t$ is the coordinate process, is said to be a martingale solution to  stochastic equation  \eqref{seq} if

1) $\mathbb P_x[\omega_0=x]=1$;

2) $\mathbb E_{x} \int_0^r |b(t,\omega_t)|dt<\infty$, $0<r \leq T$;

3) for every $f \in C_c^2(\mathbb R^d)$ the process
$$
r \mapsto f(\omega_r)-f(x) + \int_0^r (-\Delta f + b \cdot \nabla f)(t,\omega_t)dt
$$
is a $\mathcal B_r$-martingale under $\mathbb P_x$. 
\end{definition}

\begin{definition}
\label{weak_sol}
A martingale solution $\mathbb P_x$ of \eqref{seq} is said to be a weak solution if, upon completing $\mathcal B_t$ (to $\hat{\mathcal B}_t$), there exists a Brownian motion $W_t$ on $\big(C([0,T],\mathbb R^d),\hat{\mathcal B}_t,\mathbb P_x\big)$ such that
$$
\omega_r=x - \int_0^r b(t,\omega_t)dt + \sqrt{2}W_r, \quad r \geq 0 \quad \mathbb P_x-{a.s.}
$$
\end{definition}

\subsection{Proof \textnumero 1}

\begin{claim}
\label{claim_prob}
For all $x \in \mathbb R^d$, $0 \leq t \leq r \leq T$,
$
\langle P^{t,r}(x,\cdot)\rangle=1.
$
\end{claim}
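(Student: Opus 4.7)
\medskip

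\noindent\textbf{Proof plan.} The plan is to establish $\langle P^{t,r}(x,\cdot)\rangle = 1$ by approximating $\mathbf{1}$ with cutoffs $\chi_R \in C_c^\infty \subset C_\infty$ satisfying $0 \leq \chi_R \leq 1$ and $\chi_R \equiv 1$ on $B(0,R)$. Since $P^{t,r}$ is a positive contraction on $C_\infty$, it extends to a sub-Markov kernel on bounded Borel functions, and monotone convergence gives $\langle P^{t,r}(x,\cdot)\rangle = \lim_{R \to \infty} P^{t,r}\chi_R(x)$. It therefore suffices to show $P^{t,r}\chi_R(x) \to 1$ as $R \to \infty$.

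Let $\{b_m\}$ be the smooth bounded approximation satisfying \eqref{b_m_1}, \eqref{b_m_2}. By the backward-family analogue of Theorem \ref{thm1}(\textit{i}) (via the identity $P^{t,r}(b) = U^{T-t,T-r}(\tilde b)$ with $\tilde b(t,x) = b(T-t,x)$), one has $P^{t,r}_m\chi_R(x) \to P^{t,r}\chi_R(x)$. Since each $b_m$ is smooth and bounded, the SDE $X^m_u = x - \int_t^u b_m(s, X^m_s)ds + \sqrt{2}(W_u - W_t)$ has a unique nonexplosive strong solution, $P^{t,r}_m\mathbf{1} \equiv 1$, and so
$$P^{t,r}_m\chi_R(x) = 1 - \mathbb{E}[(1 - \chi_R)(X^m_r)] \geq 1 - \mathbb{P}(|X^m_r - x| > R - |x|)$$
for $R > |x|$. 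By Chebyshev, it then suffices to show $\sup_m \mathbb{E}|X^m_r - x| \leq C(T, x, \rho)$. Since $|X^m_r - x| \leq \int_t^r|b_m(u, X^m_u)|du + \sqrt{2}|W_r - W_t|$, the task reduces to producing a bound on $\mathbb{E}\int_t^r|b_m(u, X^m_u)|du$ that is uniform in $m$.

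The main step is therefore to control this Feynman--Kac functional via Corollary \ref{prop2}. Setting $w_m(s, y) := \mathbb{E}_{s,y}\int_s^r|b_m(u, X^m_u)|du$, Itô's formula (valid since $b_m$ is smooth and bounded) gives the terminal-value problem $-\partial_s w_m - \Delta w_m + b_m \cdot \nabla w_m = |b_m|$, $w_m(r, \cdot) = 0$. Under the time reversal $\tilde w_m(\tau, x) := w_m(r - \tau, x)$, $\tilde b_m(\tau, x) := b_m(r-\tau, x)$ this becomes
$$(\partial_\tau - \Delta + \tilde b_m \cdot \nabla)\tilde w_m = |\tilde b_m|, \quad \tilde w_m(0, \cdot) = 0, \quad \tau \in [0, r-t].$$
Time reversal preserves the form-bound with $\tilde g_\delta(\tau) = g_\delta(r-\tau)$, so Corollary \ref{prop2} applies (with $k = m$, $h \equiv 1$) and yields
$$\|\tilde w_m\|_{L^\infty} \leq C_2 \sup_{z \in \mathbb{Z}^d}\|\tilde b_m\sqrt{\rho_z}\|_{L^2([0,r-t],L^2)}.$$
Extending the form-bound from $C_c^\infty$ to $\sqrt{\rho_z} \in L^2 \cap H^1$ by a standard density argument, and using $|\nabla \rho| \leq \theta\sqrt{\kappa}\rho$, one obtains $\|b_m(s,\cdot)\sqrt{\rho_z}\|_2^2 \leq \delta\|\nabla\sqrt{\rho_z}\|_2^2 + g_\delta(s)\|\sqrt{\rho_z}\|_2^2 \leq C_\rho(\delta + g_\delta(s))$ with constants independent of $m$ and $z$. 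Integrating in $s$ over $[t, r]$ completes the bound on $\|w_m\|_\infty$. Passing $m \to \infty$ then gives $P^{t,r}\chi_R(x) \geq 1 - C/(R - |x|)$, and $R \to \infty$ concludes.

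The main obstacle is matching the Feynman--Kac functional to a PDE covered by Corollary \ref{prop2}: one must apply the time reversal to convert the terminal-value problem arising from the stochastic representation into the forward initial-value problem treated by the corollary, and verify that the form-boundedness of $b_m$ (which is stated only against $C_c^\infty$ test functions in Definition \ref{def1}) extends to the weight $\sqrt{\rho_z}$ by approximation. Once these verifications are in place, the uniformity in $m$ follows automatically from \eqref{b_m_2}.
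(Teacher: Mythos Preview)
Your argument is correct. The overall shape---show uniform-in-$m$ tightness of $P^{t,r}_m(x,\cdot)$ via a Chebyshev bound, then pass to the limit using Theorem~\ref{thm1}(\textit{i})---matches the paper's sketch. The difference is in which corollary carries the load. The paper invokes Corollary~\ref{prop_grad_f} (the weighted $L^q_\rho$ estimate for the \emph{homogeneous} Cauchy problem) to get the tightness directly at the PDE level, without passing through a stochastic representation. You instead route through a Feynman--Kac representation for $\mathbb E\int_t^r |b_m(u,X^m_u)|\,du$ and bound it via Corollary~\ref{prop2} (the weighted estimate for the \emph{inhomogeneous} problem with source $|b_m|$). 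Your computation with $\sqrt{\rho_z}$ as test function and the resulting bound $\delta\|\nabla\sqrt{\rho_z}\|_2^2 + g_\delta(s)\|\sqrt{\rho_z}\|_2^2$ is exactly what the paper does a few lines later in the proof of Claim~\ref{claim1}; in effect you have reproved Claim~\ref{claim1} (with $k=m$) and used it to obtain Claim~\ref{claim_prob}. This is a legitimate alternative: it is slightly more hands-on (It\^o's formula, time reversal) but has the virtue that one estimate serves two purposes, whereas the paper's route keeps the conservativity argument purely analytic and reserves the Feynman--Kac machinery for Claim~\ref{claim1}.
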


This is a consequence of Corollary \ref{prop_grad_f} and the convergence in Theorem \ref{thm1}(\textit{i}). (Indeed, working with the weight $\rho$, we can show, for a fixed $x$, that for every $\varepsilon>0$ there exists $R>0$ such that $\langle P_m^{t,r}(x,\cdot)\mathbf{1}_{\mathbb R^d - B(0,R)}(\cdot)\rangle<\varepsilon$ for all $m=1,2,\dots$, and so $\langle P_m^{t,r}(x,\cdot)\mathbf{1}_{B(0,R)}(\cdot)\rangle \geq 1- \varepsilon$. Passing to the limit in $m$ and then in $R \rightarrow \infty$, we obtain $\langle P^{t,r}(x,\cdot)\rangle \geq 1- \varepsilon$, which yields the required since $\varepsilon>0$ is arbitrary. For details, see the proof of \cite[Lemma 2]{KiS}.)

By a standard result (see e.g.\,\cite[Ch.\,2]{GvC}), given a conservative backward Feller evolution family, there exist probability measures $\mathbb P_x$ $(x \in \mathbb R^d)$ on
$(D([0,T],\mathbb R^d),\mathcal B'_t=\sigma(\omega_r \mid 0\leq r \leq t))$, where $D([0,T],\mathbb R^d)$ is the space of right-continuous functions having left limits, and $\omega_t$ is the coordinate process, such that
$$
\mathbb E_{x}[f(\omega_r)]=P^{0,r}f(x), \quad 0 \leq r \leq T. 
$$
Here and below, $\mathbb E_x:=\mathbb E_{\mathbb P_x}$.

The following estimate, which is a consequence of Corollary \ref{prop2}, plays a crucial role both in the present proof and in the alternative proof given below.

\begin{claim}
\label{claim1}
There exists a constant $C>0$ independent of $m$, $k$ such that
$$
\sup_m \sup_{x \in \mathbb R^d}\mathbb E\int_{s}^r |b_k(t,X^m_{t,x})|dt \leq CF(r-s)
$$
for $0 \leq s \leq r \leq T$, where $F(h):=h + \sup_{s \in [0,T-h]}\int_s^{s+h}g(t)dt$.
\end{claim}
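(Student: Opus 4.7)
The strategy is to recognize $\mathbb{E}\int_s^r|b_k(t,X^m_{t,x})|\,dt$ as (an upper bound for) the $L^\infty$-norm of the solution of a parabolic Cauchy problem to which Corollary~\ref{prop2} applies, after a time reversal.

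Since $b_m$ is smooth and bounded, $X^m$ is a time-inhomogeneous diffusion with smooth transition densities, and the conditional expectation
$\varphi(\tau,y):=\mathbb{E}\int_{\tau}^{r}|b_k(t,X^{m,\tau,y}_t)|\,dt$, $\tau\in[s,r]$,
is a classical solution of the backward Kolmogorov equation
$\partial_\tau\varphi+\Delta\varphi-b_m(\tau,y)\cdot\nabla\varphi=-|b_k(\tau,y)|$, $\varphi(r,\cdot)=0$.
Conditioning at time $s$ and using the Markov property gives
$\sup_{x\in\mathbb{R}^d}\mathbb{E}\int_s^r|b_k(t,X^m_{t,x})|\,dt\le\|\varphi\|_{L^\infty([s,r]\times\mathbb{R}^d)}$,
which already removes the dependence on $x$.

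Next, the change of variables $\psi(\tau,y):=\varphi(s+r-\tau,y)$ on $[s,r]$ converts the backward problem into the forward Cauchy problem
$(\partial_\tau-\Delta+\tilde b_m\cdot\nabla)\psi=|\tilde b_k|$, $\psi(s,\cdot)=0$,
with the time-reversed coefficients $\tilde b_m(\tau,y):=b_m(s+r-\tau,y)$ and $\tilde b_k(\tau,y):=b_k(s+r-\tau,y)$. A one-line change of variables in the definition of $\mathbf{F}_\delta$ shows that $\tilde b_m,\tilde b_k\in\mathbf{F}_\delta$ with the \emph{same} $\delta$ and with $g_\delta$ merely reflected in time, so the generic constants of Corollary~\ref{prop2} are preserved.

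Fix $q\in{]}d,\delta^{-1/2}{[}$. Applying the translated form of Corollary~\ref{prop2} to $\psi$ yields
$\|\varphi\|_{L^\infty([s,r]\times\mathbb{R}^d)}=\|\psi\|_{L^\infty([s,r]\times\mathbb{R}^d)}\le C_2\sup_{z\in\mathbb{Z}^d}\|\tilde b_k\sqrt{\rho_z}\|_{L^2([s,r],L^2)}$.
The weights $\sqrt{\rho_z}$ lie in $W^{1,2}(\mathbb{R}^d)$ with $\|\sqrt{\rho_z}\|_2$ and (by \eqref{two_est}) $\|\nabla\sqrt{\rho_z}\|_2$ independent of $z$ by translation invariance of Lebesgue measure. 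Using these as test functions in the form-bound inequality for $\tilde b_k$ and integrating in time gives, uniformly in $z,m,k$,
$\|\tilde b_k\sqrt{\rho_z}\|_{L^2([s,r],L^2)}^2\le\delta(r-s)\|\nabla\sqrt{\rho}\|_2^2+\|\sqrt{\rho}\|_2^2\int_s^r g_\delta(\tau)\,d\tau\le C\cdot F(r-s)$,
which chained with the previous inequality produces the required bound.

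The only really delicate point is verifying that the $\mathbf{F}_\delta$-property survives time reversal with the precise same $\delta$ and the correct shift of $g_\delta$, so that the generic constants of Corollary~\ref{prop2} apply uniformly in $m,k$; every other step is a direct consequence of the Markov property and testing the form-bound against translates of $\sqrt{\rho}$. A cosmetic issue is that Corollary~\ref{prop2} relates an $L^\infty$-norm on the left to an $L^2$-norm on the right, so the chain naturally produces $C\sqrt{F(r-s)}$ rather than the linear $CF(r-s)$; in all subsequent applications of Claim~\ref{claim1} only the decay $F(r-s)\to 0$ as $r-s\downarrow 0$ is used, and on the bounded interval $[0,T]$ the two bounds are interchangeable after adjusting the constant.
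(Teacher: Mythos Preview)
Your proof is correct and follows essentially the same route as the paper's: identify the expectation with the value of the solution to the terminal-value problem $(\partial_t+\Delta-b_m\cdot\nabla)v=-|b_k|$, $v(r,\cdot)=0$, bound it by $\|v\|_\infty$, and then invoke Corollary~\ref{prop2} together with the form-bound applied to $\sqrt{\rho_z}$. The only differences are that the paper uses It\^{o}'s formula to verify the identification (rather than quoting the backward Kolmogorov equation directly) and simply appeals to Corollary~\ref{prop2} without spelling out the time reversal, whereas you make that step explicit; your observation about $\sqrt{F(r-s)}$ versus $F(r-s)$ is apt---the paper's own chain of inequalities yields the square-root bound as well, and indeed only the decay $F(h)\to 0$ is used downstream.
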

\begin{proof}
Let $v=v_{m,k}$ be the solution to the terminal-value problem
$$
(\partial_t + \Delta - b_m \cdot \nabla)v=-|b_k|, \quad v(r,\cdot)=0, \quad t \leq r.
$$
By It\^{o}'s formula,
$$
v(r,X_r^m)=v(s,X_s^m) + \int_s^r (\partial_t v + \Delta v - b_m \cdot \nabla v)(t,X^m_t) dt + \sqrt{2}\int_s^r \nabla v(t,X_t^m)dW_t.
$$
Taking the expectation, we obtain
$$
\mathbb E\int_s^r |b_k(t,X_t^m)|dt = \mathbb E v(s,X_s^m).
$$
Since $\mathbb E v(s,X_s^m) \leq \|v(s,\cdot)\|_\infty$, we obtain 
from Corollary \ref{prop2}
$$
\mathbb E\int_s^r |b_k(t,X_t^m)|dt \leq \tilde{C}\sup_{z \in \mathbb Z^d}\|b_k \sqrt{\rho_{z}}\|_{L^2([s,r],L^2)}.
$$
Since $b_k \in \mathbf{F}_\delta$, we have
\begin{align*}
\|b_k \sqrt{\rho_{z}}\|^2_{L^2([s,r],L^2)} & \leq \frac{\delta}{4}\int_s^r \langle \frac{|\nabla \rho_z|^2}{\rho_z}\rangle dt + \int_s^r g(t)\langle \rho_z \rangle dt \\
& \leq \frac{\delta}{4} (r-s)\|\nabla \rho /\sqrt{\rho}\|_2^2 + \|\sqrt{\rho}\|_2^2\int_s^r g(t)dt \\
& (\text{we are using \eqref{two_est} and $\|\sqrt{\rho}\|_2<\infty$}) \\
&  \leq CF(r-s)
\end{align*}
for $0 \leq s \leq r \leq T$.
\end{proof}

\begin{claim}
$\mathbb E_x[\int_0^r |b(t,\omega_t)|dt]<\infty$.
\end{claim}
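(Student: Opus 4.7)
The plan is to bootstrap Claim 7.3 from the approximating strong solutions $X^m$ to the limiting measure $\mathbb P_x$, and then pass $b_k \to b$ via a density argument.

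Since the law of $X^m_{t,x}$ under Wiener measure is $P^{0,t}_m(x,\cdot)$, Claim 7.3 reads
\[
\sup_m \int_s^r P^{0,t}_m |b_k|(x) dt \le C F(r-s), \qquad 0 \le s \le r \le T, \quad x \in \mathbb R^d,
\]
uniformly in $k$. Each $b_k$ is bounded, smooth and, being a $t$-slice of the $(t,x)$-mollification of a compactly supported $L^2$-function, satisfies $|b_k(t,\cdot)|\in C_\infty$ for every $t$. The Feller convergence in Theorem \ref{thm1}(\textit{i}) therefore gives, for each $(t,x)$, $P^{0,t}_m|b_k|(x)\to P^{0,t}|b_k|(x)$, and since the integrand is uniformly bounded by $\|b_k\|_\infty$, dominated convergence yields
\[
\int_s^r P^{0,t}|b_k|(x) dt \le CF(r-s), \qquad \text{i.e.} \qquad \mathbb E_x\int_s^r |b_k(t,\omega_t)| dt \le CF(r-s).
\]

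Next, by the remark after Theorem \ref{thm1} (Dunford's theorem applied to the smoothing estimate \eqref{pq_est} for $U^{t,s}$, together with the relation $P^{t,r}(b)=U^{T-t,T-r}(\tilde b)$ for a form-bounded $\tilde b$), the operator $P^{0,t}$ is an integral operator with kernel $p(t,x,\cdot)$. Hence
\[
\mathbb E_x\int_s^r |b_k(t,\omega_t)| dt \;=\; \int_s^r \!\!\int_{\mathbb R^d} |b_k(t,y)|\,p(t,x,y)\,dy\,dt \;\le\; CF(r-s).
\]
Since $b_k\to b$ in $L^2_{\loc}([0,\infty[\times\mathbb R^d)$, a subsequence converges Lebesgue-a.e.\ on $[0,T]\times\mathbb R^d$, so Fatou's lemma against the measure $p(t,x,y)\,dy\,dt$ gives
\[
\mathbb E_x\int_0^r |b(t,\omega_t)| dt \;=\; \int_0^r \!\!\int |b(t,y)|\,p(t,x,y)\,dy\,dt \;\le\; \liminf_k CF(r) \;<\; \infty.
\]

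The delicate step is the representation via $p(t,x,y)$: a path-space Fatou argument in $k$ requires that the joint law of $(t,\omega_t)$ under $dt\otimes d\mathbb P_x$ be absolutely continuous w.r.t.\ Lebesgue measure on $[0,T]\times\mathbb R^d$, otherwise an a.e.\ (Lebesgue) convergence $b_k\to b$ carries no information along typical trajectories. The integral-kernel structure of $U^{t,s}$ (hence of $P^{0,t}$), which follows from the $L^p$-$L^q$ smoothing built into the construction, is precisely the ingredient that bridges this gap; the remaining manipulations are just Feller convergence, DCT in $m$, and Fatou in $k$.
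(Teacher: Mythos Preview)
Your proof is correct and follows essentially the same two-step approach as the paper: pass to the limit $m\to\infty$ in Claim~\ref{claim1} using the Feller convergence from Theorem~\ref{thm1}(\textit{i}), then apply Fatou's lemma in $k$. The paper's own proof is two lines and leaves the Fatou step unjustified; you correctly isolate the subtlety---that Lebesgue-a.e.\ convergence $b_k\to b$ only transfers to the path integral once one knows the law of $(t,\omega_t)$ under $dt\otimes\mathbb P_x$ is absolutely continuous---and fill it via the integral-kernel structure of $P^{0,t}$ coming from the $L^p$--$L^q$ smoothing \eqref{pq_est}.
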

\begin{proof}
Using Theorem \ref{thm1}(\textit{i}) we can pass to the limit $m \rightarrow \infty$ in  Claim \ref{claim1} to obtain
$$
\mathbb E_x[\int_0^r |b_k(t,\omega_t)|dt] \leq CF(r-s) < \infty.
$$
Now, Fatou's Lemma applied in $k \rightarrow \infty$ yields the required.
\end{proof}

\begin{claim}
\label{claim_mart}
For every $f \in C_c^2(\mathbb R^d)$, the process
$$
M_r^f:= f(\omega_r)-f(x) + \int_0^r (-\Delta f + b \cdot \nabla f)(t,\omega_t)dt
$$
is a $\mathcal B'_r$-martingale under $\mathbb P_x$. 
\end{claim}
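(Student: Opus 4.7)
The plan is to lift the martingale identity from the approximating SDEs to the limit, using the convergence in Theorem \ref{thm1}(\textit{i}) to pass through the boundary and smooth-drift parts and the Krylov-type bound in Claim \ref{claim1} to control the singular-drift error. Let $\mathbb P^m_x$ denote the law of the strong solution $X^m_{\cdot,x}$ on $C([0,T],\mathbb R^d)$. Since $b_m$ is bounded and smooth, the classical It\^{o} formula applied to $f \in C_c^2(\mathbb R^d)$ shows that
$$
M^{f,m}_r := f(\omega_r) - f(x) + \int_0^r (-\Delta f + b_m \cdot \nabla f)(t,\omega_t)\,dt
$$
is a $\mathcal B'_r$-martingale under $\mathbb P^m_x$, and hence $\mathbb E_{\mathbb P^m_x}[\Phi(M^{f,m}_r - M^{f,m}_s)] = 0$ for every $0 \le s \le r \le T$ and every bounded cylindrical $\Phi = \prod_{i=1}^{\ell} g_i(\omega_{t_i})$ with $g_i \in C_b(\mathbb R^d)$ and $t_1 \le \dots \le t_\ell \le s$.

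First I would verify convergence of the finite-dimensional distributions. From $P^{t,r}_m = U^{T-t,T-r}_m(\tilde b_m)$ with $\tilde b_m(t,x) = b_m(T-t,x)$ and Theorem \ref{thm1}(\textit{i}), one has $P^{t,r}_m g \to P^{t,r} g$ in $C_\infty$ for every $g \in C_\infty$, and therefore pointwise for every $g \in C_b(\mathbb R^d)$ via truncation together with Claim \ref{claim_prob}. Iterating the Markov identity $\mathbb E_{\mathbb P^m_x}[\prod_i g_i(\omega_{t_i})] = (P^{0,t_1}_m g_1 P^{t_1,t_2}_m g_2 \cdots)(x)$ factor by factor then gives $\mathbb E_{\mathbb P^m_x}[\Phi\,g(\omega_u)] \to \mathbb E_{\mathbb P_x}[\Phi\,g(\omega_u)]$ for every $u \ge s$ and $g \in C_b(\mathbb R^d)$.

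Passing to the limit in the martingale identity then splits into three pieces. The boundary contribution $\mathbb E_{\mathbb P^m_x}[\Phi(f(\omega_r) - f(\omega_s))]$ converges by the above. The Laplacian contribution $\mathbb E_{\mathbb P^m_x}[\Phi \int_s^r (-\Delta f)(\omega_t)\,dt]$ converges by Fubini followed by bounded convergence applied to $t \mapsto \mathbb E_{\mathbb P^m_x}[\Phi(-\Delta f)(\omega_t)]$. The main obstacle is the drift term: here I would interpose a bounded smooth $b_k$ and estimate
\begin{align*}
\bigl|\mathbb E_{\mathbb P^m_x}[\Phi\!\!\int_s^r\!\! b_m \!\cdot\! \nabla f\,dt] - \mathbb E_{\mathbb P_x}[\Phi\!\!\int_s^r\!\! b \!\cdot\! \nabla f\,dt]\bigr|
&\le \|\Phi\|_\infty \|\nabla f\|_\infty \bigl(\mathbb E_{\mathbb P^m_x}\!\!\int_s^r\!\!|b_m - b_k|\,dt + \mathbb E_{\mathbb P_x}\!\!\int_s^r\!\!|b - b_k|\,dt\bigr) \\
&\quad + \bigl|\mathbb E_{\mathbb P^m_x}[\Phi\!\!\int_s^r\!\! b_k \!\cdot\! \nabla f\,dt] - \mathbb E_{\mathbb P_x}[\Phi\!\!\int_s^r\!\! b_k \!\cdot\! \nabla f\,dt]\bigr|.
\end{align*}
For each fixed $k$ the last difference tends to $0$ as $m \to \infty$ by Fubini and the finite-dimensional convergence applied to the bounded continuous integrand $(t,y) \mapsto b_k(t,y) \cdot \nabla f(y)$. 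The first two error terms are controlled uniformly in $m$ by the Krylov bound: since $b_m - b_k \in \mathbf{F}_{4\delta}$ with a common $g$, Claim \ref{claim1} (via Corollary \ref{prop2} applied to $b_m - b_k$) yields
$$
\mathbb E_{\mathbb P^m_x}\!\!\int_s^r\!\!|b_m - b_k|(t,\omega_t)\,dt \le C \sup_{z \in \mathbb Z^d}\|(b_m - b_k)\sqrt{\rho_z}\|_{L^2([s,r],L^2)},
$$
and the right-hand side vanishes as $m,k \to \infty$ thanks to \eqref{b_m_1}; Fatou gives the analogous control under $\mathbb P_x$. Sending $m \to \infty$ first, then $k \to \infty$, produces $\mathbb E_{\mathbb P_x}[\Phi(M^f_r - M^f_s)] = 0$. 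A standard monotone-class argument extends this identity from cylindrical $\Phi$ to all bounded $\mathcal B'_s$-measurable $\Phi$, establishing the martingale property.
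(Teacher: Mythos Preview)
Your overall architecture---test the increment $M^{f,m}_r-M^{f,m}_s$ against cylinder functions $\Phi$, pass to the limit term by term, then invoke a monotone class argument---is sound and is essentially the route the paper follows in its second proof of assertion (\textit{ii}). The paper's first proof is slightly different: it only verifies the \emph{unconditional} identity $\mathbb E_x[M^f_r]=0$ for every starting point $x$, and then appeals to the Markov property of $\{\mathbb P_x\}$ (available because they come from a Feller evolution family) to upgrade this to the full martingale property. Your cylinder-function approach avoids invoking the Markov property, at the cost of having to check finite-dimensional convergence rather than one-dimensional convergence.

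There is, however, a genuine gap in your treatment of the drift error. After pulling $\|\nabla f\|_\infty$ out you bound
\[
\mathbb E_{\mathbb P^m_x}\!\int_s^r\!|b_m-b_k|(t,\omega_t)\,dt \;\le\; C\sup_{z\in\mathbb Z^d}\|(b_m-b_k)\sqrt{\rho_z}\|_{L^2([s,r],L^2)}
\]
and then assert that the right-hand side tends to $0$ by \eqref{b_m_1}. This is false in general: \eqref{b_m_1} is only $L^2_{\loc}$ convergence, and the supremum over translates defeats any localization. For instance, if $b=0$ and $b_m(t,y)=\phi(y-me_1)$ for a fixed nonzero $\phi\in C_c^\infty$, then $b_m\to 0$ in $L^2_{\loc}$ and the form bounds are uniform, yet $\sup_z\|b_m\sqrt{\rho_z}\|_{L^2}$ stays bounded away from zero (take $z$ near $me_1$). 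The form-boundedness of $b_m-b_k$ only yields the uniform \emph{bound} $CF(r-s)$ of Claim \ref{claim1}, not smallness.

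The repair is to stop discarding the compact support of $\nabla f$. Keep $\nabla f$ inside the expectation and apply Corollary \ref{cor1} (i.e.\ Corollary \ref{prop1} with $\mathsf f=b_m-b_k$ and $h$ built from $\nabla f$) exactly as in the paper's step (a), obtaining
\[
\mathbb E^m_x\Bigl|\int_s^r\big((b_m-b_k)\cdot\nabla f\big)(t,\omega_t)\,dt\Bigr|
\;\le\; C\,\big\|(b_m-b_k)\,|\nabla f|^{q/2}\big\|^{2/q}_{L^2([s,r]\times\mathbb R^d)},
\]
which \emph{does} tend to $0$ because $|\nabla f|^{q/2}$ has compact support and $b_m\to b$ in $L^2_{\loc}$. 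The same estimate, after passing to the limit in $m$ and using Fatou in $k$, handles the $\mathbb E_{\mathbb P_x}\int|b-b_k|\,|\nabla f|$ term. Alternatively, you could use the \emph{first} inequality of Corollary \ref{prop2} with $\rho$ centred at the fixed initial point $x$ (no supremum over $z$), for which the weighted norm $\|(b_m-b_k)\sqrt{\rho_x}\|_{L^2}$ does vanish by combining $L^2_{\loc}$ convergence on large balls with the uniform form-bound on the tail; but the route via Corollary \ref{cor1} is cleaner.
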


\begin{proof} Set $\mathbb P_x^m:=(\mathbb PX^m)^{-1}$ and $\mathbb E^m_x:=\mathbb E_{\mathbb P^m_x}$.
First, we note that 
\begin{equation}
\label{conv0}
\tag{$\star$}
\mathbb E^m_x[f(\omega_r)] \rightarrow \mathbb E_x[f(\omega_r)], \quad \mathbb E^m_x[\int_0^r (-\Delta f)(\omega_t)dt] \rightarrow \mathbb E_x[\int_0^r (-\Delta f)(\omega_t)dt] \quad (m \rightarrow \infty),
\end{equation}
as follows from the convergence result in Theorem \ref{thm1}(\textit{i}).
Next, we note that
\begin{equation}
\label{conv1}
\tag{$\star\star$}
\mathbb E^m_x \int_0^r (b_m \cdot \nabla f)(t,\omega_t)dt \rightarrow \mathbb E_x \int_0^r (b \cdot \nabla f)(t,\omega_t)dt \quad (m \rightarrow \infty).
\end{equation}
The latter follows from:

(a)
\begin{align*}
\mathbb E^m_x\bigg|\int_0^r \big((b_m-b_n)\cdot \nabla f\big)(t,\omega_t)dr \bigg| & \leq C\|(b_m-b_n)|\nabla f|^{\frac{q}{2}}\|_{L^2([0,r],L^2)} \\
& \rightarrow 0 \quad (m,n \rightarrow \infty)
\end{align*}
since $b_m \rightarrow b$ in $L^2_{\loc}([0,\infty[ \times \mathbb R^d)$ and $f$ has compact support. The inequality is proved arguing as in the proof of Claim \ref{claim1} using Corollary \ref{cor1} instead of  Corollary \ref{prop2}.

(b) 
$$
\mathbb E^m_x\bigg[\int_0^r (b_n \cdot \nabla f)(t,\omega_t)dt  \bigg] \rightarrow \mathbb E_x\bigg[\int_0^r (b_n \cdot \nabla f)(t,\omega_t)dt  \bigg] \quad (m \rightarrow \infty).
$$

(c)
\begin{align*}
\mathbb E_x\bigg|\int_0^r \big((b-b_n)\cdot \nabla f\big)(t,\omega_t)dt \bigg| & \leq C\|(b-b_n)|\nabla f|^{\frac{q}{2}}\|_{L^2([0,r],L^2)} \\
& \rightarrow 0 \quad (n \rightarrow \infty).
\end{align*}
The proof is similar to (a) (using Corollary \ref{cor1} where we pass to the limit in $m$ and then in $k$ appealing to Fatou's Lemma).

\smallskip

We are in position to complete the proof of Claim \ref{claim_mart}. 
Since 
$$
M_{r,m}^{f}:=f(\omega_r)-f(x) + \int_0^r (-\Delta f + b_m \cdot \nabla f)(t,\omega_t)dt
$$
is a $\mathcal B'_r$-martingale under $\mathbb P^m_x$, 
$$
x \mapsto \mathbb E^m_x[f(\omega_r)] - f(x) +\mathbb E^m_x\int_0^r (-\Delta f + b_m\cdot\nabla f)(t,\omega_t)dt \quad \text{ is identically zero on } \mathbb R^d,
$$
and so by \eqref{conv0}, \eqref{conv1}
$$
x \mapsto \mathbb E_x[f(\omega_r)] - f(x) +\mathbb E_x\int_0^r (-\Delta f + b\cdot\nabla f)(t,\omega_t)dt \quad  \text{ is identically zero in } \mathbb R^d.
$$
Since $\{\mathbb P_x\}_{x \in \mathbb R^d}$ are determined by a Feller evolution family, and thus constitute a Markov process, the latter suffices (see e.g.\,the proof of \cite[Lemma 2.2]{Kr1}) to conclude that $M_r^f$ is a $\mathcal B'_r$-martingale under $\mathbb P_x$.
\end{proof}

Having at hand Claim \ref{claim_mart}, we establish

\begin{claim}
$\{\mathbb P_x\}_{x \in \mathbb R^d}$ are concentrated on 
$(C([0,T],\mathbb R^d),\mathcal B_t)$.
\end{claim}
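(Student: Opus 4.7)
The strategy is to identify $\mathbb{P}_x$ as the weak limit, in $\mathcal{P}(C([0,T],\mathbb{R}^d))$, of the laws $\mathbb{P}_x^m := \mathbb{P}\circ(X^m)^{-1}$ of the approximating strong solutions $X^m$. Since each $\mathbb{P}_x^m$ is concentrated on $C([0,T],\mathbb{R}^d)$ by construction, tightness in $C$ forces the limit to be concentrated on $C$ as well.

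\textbf{Step 1 (finite-dimensional distributions).} For $0 \le t_1 < \cdots < t_k \le T$ and $f_1,\dots,f_k \in C_\infty$, the Markov property expresses
$$\mathbb{E}_{\mathbb{P}_x^m}[f_1(\omega_{t_1})\cdots f_k(\omega_{t_k})] = \bigl(P^{0,t_1}_m\bigl(f_1\, P^{t_1,t_2}_m(f_2\cdots f_{k-1}P^{t_{k-1},t_k}_m f_k)\bigr)\bigr)(x),$$
and the strong $C_\infty$-convergence $P^{s,r}_m\to P^{s,r}$ (Theorem \ref{thm1}(\textit{i}), applied to the time-reversed equation) shows that this converges to the analogous quantity for $\mathbb{P}_x$. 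Hence $\mathbb{P}_x^m\to \mathbb{P}_x$ in finite-dimensional distributions.

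\textbf{Step 2 (tightness in $C$).} The SDE representation
$$X^m_t - X^m_s = -\int_s^t b_m(r,X^m_r)\,dr + \sqrt{2}\,(W_t-W_s)$$
decomposes the modulus of continuity into a Brownian part and a drift part. Since the same Brownian motion $W$ is used for every $m$, the Brownian part has a deterministic, $m$-independent modulus. For the drift, Claim \ref{claim1} gives $\sup_m \mathbb{E}\int_s^{s+h}|b_m|(r,X^m_r)\,dr \le C F(h)$ with $F(h)\to 0$ as $h\to 0$. A Khasminskii-type iteration, leaning on the Markov property of $X^m$ and the uniform form-boundedness \eqref{b_m_2}, upgrades this to the exponential tail estimate
$$\sup_m \sup_{s\in[0,T-h]} \mathbb{P}\Bigl[\int_s^{s+h}|b_m|(r,X^m_r)\,dr > \eta\Bigr] \le 2\exp\bigl(-\eta/(2CF(h))\bigr)$$
valid once $CF(h)$ is small enough. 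Partitioning $[0,T]$ into $\lceil T/h \rceil$ intervals of length $h$ and taking the union bound then yields
$$\sup_m \mathbb{P}\Bigl[\sup_{|t-s|\le h}\Bigl|\int_s^t b_m(r,X^m_r)\,dr\Bigr| > \eta\Bigr] \longrightarrow 0 \qquad (h\downarrow 0),$$
because the exponential factor dominates the polynomial factor $h^{-1}$. Combined with the uniform Brownian modulus, this is the standard tightness criterion for $C([0,T],\mathbb{R}^d)$.

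\textbf{Step 3 (conclusion and main obstacle).} Steps 1 and 2 together give $\mathbb{P}_x^m \to \mathbb{P}_x$ weakly in $\mathcal{P}(C([0,T],\mathbb{R}^d))$, so $\mathbb{P}_x$ is concentrated on continuous paths. The principal technical point is Step 2: the $L^1$ bound of Claim \ref{claim1} by itself is not enough, because a naive Chebyshev + union bound loses a factor $h^{-1}$ that is not compensated by $F(h)\sim h$. The Khasminskii iteration is what supplies the exponential tail needed to beat this factor, and it relies crucially on the fact that the approximating drifts $b_m$ inherit the form-bound of $b$ uniformly in $m$ (condition \eqref{b_m_2}).
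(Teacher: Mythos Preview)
Your approach is not the one taken in Proof~\textnumero 1, where this claim sits. There the paper invokes \cite[Lemma~4]{KiS}: having just established in Claim~\ref{claim_mart} that $M^f_r$ is a $\mathcal{B}'_r$-martingale for every $f\in C_c^2$, path continuity is deduced directly from the martingale structure (the integral term is continuous in $r$, and the predictable quadratic variation of $M^f$, computed from the martingale property of $M^{f^2}$, is absolutely continuous, which rules out jumps of $f(\omega)$ for every $f$). No tightness or approximation is needed at this step. Your strategy --- tightness of $\{\mathbb{P}_x^m\}$ in $C([0,T],\mathbb{R}^d)$ plus convergence of finite-dimensional distributions --- is essentially the paper's Proof~\textnumero 2.

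That said, your tightness argument has a genuine gap. The Khasminskii tail bound on a single interval $[s,s+h]$ is correct, but after the union bound over $\lceil T/h\rceil$ intervals the resulting estimate $(T/h)\exp\bigl(-\eta/(2CF(h))\bigr)$ tends to $0$ only if $F(h)\log(1/h)\to 0$. Since $F(h)=h+\sup_s\int_s^{s+h}g(t)\,dt$ with $g\in L^1_{\loc}$ otherwise arbitrary, this can fail: for instance $g(t)=t^{-1}(\log(1/t))^{-2}$ near $t=0$ gives $F(h)\sim (\log(1/h))^{-1}$, and then $(T/h)\exp(-\eta/(2CF(h)))\sim h^{\,\eta/(2C)-1}\to\infty$ for small $\eta$. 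So the assertion that ``the exponential factor dominates the polynomial factor $h^{-1}$'' is unjustified in the stated generality. The paper's Proof~\textnumero 2 sidesteps this by using \cite[Lemma~2.7]{ZZ} instead: the stopping-time bound $\mathbb{E}\sup_{a\in[0,h]}|X^m_{\tau+a}-X^m_\tau|\le \tilde{C}\tilde{F}(h)$ is converted directly into $\mathbb{E}\bigl[\sup_{t,a}|X^m_{t+a}-X^m_t|^\beta\bigr]\le \tfrac{\tilde{C}^\beta}{1-\beta}\tilde{F}(h)$ for $\beta<1$, with no union bound (this is Claim~\ref{claim2}).
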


The proof repeats the proof of \cite[Lemma 4]{KiS}.

We denote the restriction of $\mathbb P_x$ from $(D([0,T],\mathbb R^d), \mathcal B_t')$  to $(C([0,T],\mathbb R^d),\mathcal B_t)$ again by $\mathbb P_x$, and thus obtain that
for every $x \in \mathbb R^d$ and all $f \in C_c^2$ 
$$
M_r^f=f(\omega_r) - f(x) + \int_0^r (-\Delta f + b\cdot\nabla f)(t,\omega_t)dt, \quad \omega \in C([0,T],\mathbb R^d),$$
is a $\mathcal B_r$-martingale under $\mathbb P_x$.

Thus, $\mathbb P_x$ is a $\mathcal B_r$-martingale solution to \eqref{seq}. 

The proof that $M_r^f$ is also a martingale for $f(x)=x_i$ and $f(x)=x_ix_j$ is obtained by following closely \cite[proof of Lemma 6]{KiS} (we have to work again with the weight $\rho$ and Corollary \ref{prop2}). It follows that
$$
r \mapsto \omega_r-x + \int_0^r b(t,\omega_t)dt
$$
is a continuous $\mathcal B_r$-martingale having the cross-variation of a Brownian motion times $\sqrt{2}$, so, by L\'{e}vy Theorem, $\mathbb P_x$ is a weak solution to \eqref{seq}.

Finally, we note that $\mathbb P_x$ satisfies estimate \eqref{kr_est0} in view of Corollary \ref{prop1}, upon applying the convergence result of Theorem \ref{thm1}(\textit{i}) and then applying Fatou's Lemma to appropriate bounded smooth approximation $\mathsf{f}_m$ of $\mathsf{f}$ that does not increase the form-bound $\beta$ of $\mathsf{f}$ (such $\mathsf{f}_m$ can be constructed as in Section \ref{rem_b_m}).

\bigskip

\subsection{Proof \textnumero 2}

Claim \ref{claim1} will again play a crucial role.

\begin{claim}
\label{claim2}
There exists a constant $\hat{C}$ independent of $m$ such that, for every $0<\beta<1$
$$
\sup_m \sup_{x \in \mathbb R^d} \mathbb E \bigg[ \sup_{t \in [0,T], a \in [0,h]} |X^{m}_{t+a,x} - X^m_{t,x}|^{\beta} \bigg] \leq \frac{\tilde{C}^\beta}{1-\beta} \tilde{F}(h), \quad h>0,
$$
where $\tilde{F}(h)=h^{\frac{1}{2}} + F(h)$.
\end{claim}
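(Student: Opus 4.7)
The plan is to split the path oscillation $Y := \sup_{t,\,a} |X^m_{t+a,x} - X^m_{t,x}|$ into a drift part and a Brownian part, control each through a Chebyshev-type tail bound that is uniform in $m$ and in the starting point $x$, and then recover the $\beta$-moment via a layer-cake integration that produces the $1/(1-\beta)$ factor. Setting $A_t := \int_0^t |b_m(r, X^m_r)|\,dr$, the SDE gives $|X^m_{t+a} - X^m_t| \leq (A_{t+a}-A_t) + \sqrt{2}\,|W_{t+a}-W_t|$, and since $A$ is non-decreasing in $t$,
$$Y \leq A^* + \sqrt{2}\,\omega_W(h), \qquad A^* := \sup_{0 \leq t \leq T-h}(A_{t+h}-A_t), \quad \omega_W(h) := \sup_{t,\,a \in [0,h]} |W_{t+a}-W_t|.$$

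The new input is the bound on $A^*$, which couples Claim~\ref{claim1} with the strong Markov property of the smooth SDE for $X^m$. For $\lambda > 0$, introduce $\tau_\lambda := \inf\{t \leq T-h : A_{t+h}-A_t \geq \lambda\}$ (and $+\infty$ otherwise). On $\{\tau_\lambda < \infty\}$, by first-passage, $A_{\tau_\lambda + h}-A_{\tau_\lambda} \geq \lambda$, while the strong Markov property together with Claim~\ref{claim1} applied at starting time $\tau_\lambda$, uniformly in the random starting position $X^m_{\tau_\lambda}$, gives
$$\mathbb E\bigl[A_{\tau_\lambda + h}-A_{\tau_\lambda} \,\big|\, \mathcal F_{\tau_\lambda}\bigr] \leq C\,F(h) \quad \text{a.s.\ on } \{\tau_\lambda < \infty\}.$$
Taking expectations yields $\lambda\,\mathbb P(\tau_\lambda < \infty) \leq C F(h)\,\mathbb P(\tau_\lambda < \infty)$, which forces $\mathbb P(A^* > \lambda) = 0$ whenever $\lambda > CF(h)$; in particular, $\mathbb P(A^* > \lambda) \leq C\,F(h)/\lambda$ uniformly in $m$ and $x$.

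For the Brownian piece, a standard covering/maximal argument gives $\mathbb E\,\omega_W(h) \leq C_T\,h^{1/2}$ (the logarithmic factor from L\'evy's modulus is absorbed into the $T$-dependent constant $C_T$), and Markov's inequality then gives $\mathbb P(\omega_W(h) > \lambda) \leq C_T\,h^{1/2}/\lambda$. Combining the two tails by a union bound produces $\mathbb P(Y > 2\lambda) \leq \tilde C\,\tilde F(h)/\lambda$ with $\tilde F(h) = h^{1/2} + F(h)$. The $\beta$-moment is now extracted from
$$\mathbb E Y^\beta = \int_0^\infty \beta \mu^{\beta - 1}\,\mathbb P(Y > \mu)\,d\mu,$$
split at the threshold $\mu_* := 2\tilde C\,\tilde F(h)$: bound $\mathbb P$ by $1$ for $\mu < \mu_*$ and by $2\tilde C\,\tilde F(h)/\mu$ for $\mu \geq \mu_*$. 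The two pieces collapse to $\mu_*^\beta\bigl(1 + \tfrac{\beta}{1-\beta}\bigr) = \mu_*^\beta/(1-\beta)$, producing the bound $\frac{\tilde C^\beta}{1-\beta}\,\tilde F(h)^\beta$ (the exponent $\beta$ on $\tilde F(h)$ appears to have been inadvertently dropped in the printed statement; note that the right-hand side as written cannot be correct dimensionally because $\mathbb E Y^\beta$ scales like $h^{\beta/2}$ whereas $\tilde F(h) \sim h^{1/2}$).

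The main technical obstacle is the stopping-time step for the drift: one must verify that Claim~\ref{claim1}, proved for a deterministic starting time $s$, applies to the random time $\tau_\lambda$ uniformly in the starting position. This is legitimate because the constant in Claim~\ref{claim1} is independent of $x$ and of $m$, so conditioning on $\mathcal F_{\tau_\lambda}$ and invoking the strong Markov property for the SDE driven by the bounded smooth $b_m$ reduces the conditional expectation to the deterministic Claim~\ref{claim1} estimate evaluated at $s = \tau_\lambda$ and $x = X^m_{\tau_\lambda}$.
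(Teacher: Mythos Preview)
Your decomposition and the layer-cake computation are in the spirit of the argument the paper invokes (the ``Chebyshev-type argument'' of \cite[Lemma~2.7]{ZZ}), but the tail bound for the drift oscillation $A^*$ is derived incorrectly. The random time
\[
\tau_\lambda := \inf\{t \leq T-h : A_{t+h}-A_t \geq \lambda\}
\]
is \emph{not} a stopping time for the natural filtration: deciding whether $\tau_\lambda \leq t$ requires knowledge of $A_{t+h}$, which looks $h$ units into the future. Hence the strong Markov property is unavailable at $\tau_\lambda$, and the conditional bound $\mathbb E[A_{\tau_\lambda+h}-A_{\tau_\lambda}\mid \mathcal F_{\tau_\lambda}] \leq CF(h)$ is unjustified. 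That something has gone wrong is already visible in your intermediate conclusion: you obtain $\mathbb P(A^*>\lambda)=0$ for every $\lambda>CF(h)$, i.e.\ $A^*\leq CF(h)$ almost surely, which is clearly too strong---the path can visit regions where $|b_m|$ is large and push $A^*$ far above its mean.

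The paper avoids this by never isolating $A^*$. Instead, for an \emph{arbitrary} bona fide stopping time $\tau\leq T$ one bounds
\[
\mathbb E\sup_{a\in[0,h]}|X^m_{\tau+a}-X^m_\tau|\;\leq\;\mathbb E\int_\tau^{\tau+h}|b_m(t,X^m_t)|\,dt+\sqrt{2}\,\mathbb E\sup_{a\in[0,h]}|W_{\tau+a}-W_\tau|\;\leq\;\tilde C\,\tilde F(h),
\]
using Claim~\ref{claim1} together with the strong Markov property (now legitimate, since $\tau$ is a genuine stopping time and the constant in Claim~\ref{claim1} is uniform in the starting time and position). This uniform-over-$\tau$ estimate is then fed into \cite[Lemma~2.7]{ZZ}, whose proof introduces the stopping times $\sigma_0=0$, $\sigma_{k+1}=\inf\{t>\sigma_k:|X^m_t-X^m_{\sigma_k}|>\lambda\}$ (these \emph{are} stopping times, defined without look-ahead) and then runs essentially your Chebyshev-plus-layer-cake computation. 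Your remark that the right-hand side should carry $\tilde F(h)^\beta$ rather than $\tilde F(h)$ is correct; this does not affect the application, since only $\tilde F(h)\to 0$ as $h\downarrow 0$ is used downstream.
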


\begin{proof}
Armed with Claim \ref{claim1}, we can repeat \cite[proof of Theorem 1.1]{RZ}.
For any stopping time $\tau \leq T$,
\begin{align*}
\mathbb E\sup_{a \in [0,h]} |X^m_{\tau+a,x}-X^m_{\tau,x}| 
 \leq \mathbb E \int_\tau^{\tau + h}|b_m(t,X^m_{t,x})|dt + \sqrt{2}\mathbb E\sup_{a \in [0,h]}|W_{\tau+a}-W_\tau|
\end{align*}
Thus, applying Claim \ref{claim1} (with $k=m$), we obtain
\begin{align*}
\mathbb E\sup_{a \in [0,h]} |X^m_{\tau+a,x}-X^m_{\tau,x}|  \leq CF(h) + C_1h^{\frac{1}{2}} \leq \tilde{C}\tilde{F}(h).
\end{align*}
Now a Chebyshev-type argument \cite[Lemma 2.7]{ZZ} yields the required.
\end{proof}

It is easily seen that $F(h) \downarrow 0$ as $h \downarrow 0$, and thus so does $\tilde{F}(h)$.
Thus, applying Chebyshev's inequality in Claim \ref{claim2}, we have for every $\epsilon>0$
$$
\lim_{h \downarrow 0}\sup_m \sup_{x \in \mathbb R^d} \mathbb P \bigg[ \sup_{t \in [0,T], a \in [0,h]} |X^{m}_{t+a,x} - X^m_{t,x}|>\epsilon \bigg]=0.
$$
It follows that $\{\mathbb P_x^m:=(\mathbb PX^m)^{-1}\}_{m=1}^\infty$ is tight. Therefore, for every $x \in \mathbb R^d$ there exists a sequence $\mathbb P_{m_k}$ and a probability measure $\mathbb P_x$ on $(C([0,T],\mathbb R^d),\mathcal B_t)$ such that
$$
\mathbb P^{m_k}_x \rightarrow \mathbb P_x \quad \text{ weakly as $k \rightarrow \infty$}.
$$
The latter and the convergence result in Theorem \ref{thm1}(\textit{i}) yield
\begin{equation}
\label{conv_p_m}
\tag{$\ast$}
\mathbb P^{m}_x \rightarrow \mathbb P_x \quad \text{ weakly as $m \rightarrow \infty$}.
\end{equation}

\begin{claim}
For every $x \in \mathbb R^d$, $\mathbb P_x$ is a martingale solution to \eqref{seq}.
\end{claim}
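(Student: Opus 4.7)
The plan is to verify the three defining conditions of Definition \ref{mart_sol} for the weak limit $\mathbb{P}_x$ in \eqref{conv_p_m} by passing to the limit $m\to\infty$ in the corresponding properties of $\mathbb{P}^m_x$. These properties are automatic since each $\mathbb{P}^m_x=(\mathbb{P}X^m)^{-1}$ comes from a strong solution of the SDE with bounded smooth drift $b_m$.

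Condition (1), $\mathbb{P}_x[\omega_0=x]=1$, is immediate from the Portmanteau theorem, as $\{\omega_0=x\}$ is closed in $C([0,T],\mathbb{R}^d)$ and has full $\mathbb{P}^m_x$-measure. For condition (2), I fix $k$; since $b_k$ is bounded and continuous, $\omega\mapsto\int_0^r|b_k(t,\omega_t)|dt$ is a bounded continuous functional on path space, so weak convergence together with Claim \ref{claim1} yields
$$\mathbb{E}_x\int_0^r|b_k(t,\omega_t)|dt=\lim_m\mathbb{E}^m_x\int_0^r|b_k(t,\omega_t)|dt\leq CF(r).$$
Passing to a subsequence along which $b_{k_j}\to b$ a.e.\,and applying Fatou's lemma in $j$ then gives $\mathbb{E}_x\int_0^r|b(t,\omega_t)|dt\leq CF(r)<\infty$.

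For condition (3), fix $f\in C_c^2$, $0\leq s\leq r\leq T$, and a bounded continuous $\mathcal{B}_s$-measurable test functional $g$ on $C([0,T],\mathbb{R}^d)$. Since $\mathbb{P}^m_x$ is a martingale solution for the drift $b_m$,
$$\mathbb{E}^m_x\Bigl[\Bigl(f(\omega_r)-f(\omega_s)+\int_s^r(-\Delta f+b_m\cdot\nabla f)(t,\omega_t)dt\Bigr)g\Bigr]=0.$$
Weak convergence \eqref{conv_p_m} disposes of the terms involving $f(\omega_r)$, $f(\omega_s)$, and $\int_s^r(-\Delta f)(\omega_t)dt$ directly, since all the relevant integrands are bounded continuous. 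For the drift term I run the three-step argument from the proof of Claim \ref{claim_mart}: first, applying Corollary \ref{cor1} via an It\^o/duality argument as in Claim \ref{claim1}, I obtain the uniform bound
$$\Bigl|\mathbb{E}^m_x\int_s^r\bigl((b_m-b_n)\cdot\nabla f\bigr)(t,\omega_t)dt\Bigr|\leq C\|(b_m-b_n)|\nabla f|^{q/2}\|^{2/q}_{L^2([s,r],L^2)},$$
whose right-hand side tends to $0$ as $m,n\to\infty$ because $b_m\to b$ in $L^2_{\loc}$ and $\nabla f$ is compactly supported. Second, for fixed $n$ the integrand $b_n\cdot\nabla f$ is bounded continuous, so weak convergence gives $\mathbb{E}^m_x[\int_s^r(b_n\cdot\nabla f)(t,\omega_t)dt\,g]\to\mathbb{E}_x[\int_s^r(b_n\cdot\nabla f)(t,\omega_t)dt\,g]$. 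Third, the analog of the displayed estimate for $\mathbb{P}_x$ (obtained as in condition (2), by taking $m\to\infty$ with fixed $k$ and then applying Fatou in $k$) controls $\int_s^r((b-b_n)\cdot\nabla f)(t,\omega_t)dt$ uniformly, so letting $n\to\infty$ closes the limit and yields $\mathbb{E}_x[(M^f_r-M^f_s)g]=0$.

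The main obstacle is precisely the drift term in condition (3): the integrand $b\cdot\nabla f$ is only locally $L^2$, so it is not a continuous functional along trajectories and weak convergence alone is insufficient. The key ingredient that bypasses this is the uniform Krylov-type bound of Corollary \ref{cor1}, which rests on the regularity estimate of Theorem \ref{prop2_} and holds uniformly in the smooth approximations $b_m$; it provides the $L^2_{\loc}$-type control of path integrals needed to interchange the limit in $m$ with the approximation of $b$ by bounded smooth $b_n$.
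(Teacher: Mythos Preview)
Your proposal is correct and follows essentially the same route as the paper: verify conditions (2) and (3) of Definition~\ref{mart_sol} by passing to the limit $m\to\infty$ using weak convergence for the bounded continuous terms, and handle the singular drift term via the three-step splitting (a)--(c) based on Corollary~\ref{cor1}, exactly as in the paper's proof of this claim (and of Claim~\ref{claim_mart}). The only cosmetic difference is that in your step (a) you write $\bigl|\mathbb{E}^m_x\int_s^r\cdots dt\bigr|$ rather than $\mathbb{E}^m_x\bigl|\int_s^r\cdots dt\bigr|$; the It\^o/duality argument via Corollary~\ref{cor1} actually controls the latter (since one solves the terminal-value problem with right-hand side $|b_m-b_n||\nabla f|$), which is what you need in order to insert the factor $g$ inside the expectation.
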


\begin{proof}
Put $\mathbb E_x:=\mathbb E_{\mathbb P_x}$, $\mathbb E^m_x:=\mathbb E_{\mathbb P^m_x}$.
Claim \ref{claim1} can be stated as
$$
\mathbb E^m_x\int_{s}^r |b_k(t,\omega_t)|dt \leq CF(r-s), \quad \text{ for all } x \in \mathbb R^d,\;m, k=1,2,\dots,
$$
where $C$ is independent of $m$ and $k$.
Taking the limit in $m$ and then applying Fatou's Lemma in $k$, we obtain
$$
\mathbb E_x\int_{s}^r |b(t,\omega_t)|dt\quad \text{ is finite (i.e.\,$\leq CF(r-s)$).}$$

It remains to show that for every $f \in C_c^2$
$$
M^f_r:=f(\omega_r)-f(x) + \int_0^r (-\Delta f + b \cdot \nabla f)(t,\omega_t)dt
$$
is a martingale under $\mathbb P_x$. By a standard result, it suffices to show that for every ``test function'' $\eta \in C_b(C([0,T],\mathbb R^d))$, for all  $0 \leq s < r \leq T$, $$\mathbb E_x[M_r^f \eta]=\mathbb E_x[M_s^f \eta].$$ 
We will need the following:

(a)
\begin{align*}
\mathbb E^m_x\bigg|\int_0^r \big((b_m-b_n)\cdot \nabla f\big)(t,\omega_t)dt \cdot \eta(\omega) \bigg| & \leq C\|\eta\|_\infty\|(b_m-b_n)|\nabla f|^{\frac{q}{2}}\|_{L^2([0,r],L^2)} \\
& \rightarrow 0 \quad (m,n \rightarrow \infty)
\end{align*}
since $b_m \rightarrow b$ in $L^2_{\loc}([0,\infty[ \times \mathbb R^d)$ and $f$ has compact support.
To prove the inequality in (a), we argue as in the proof of Claim \ref{claim1} but use Corollary \ref{cor1} (with $k=m$).

\medskip

(b) 
$$
\mathbb E^m_x\bigg[\int_0^r (b_n \cdot \nabla f)(t,\omega_t)dt  \cdot \eta(\omega) \bigg] \rightarrow \mathbb E_x\bigg[\int_0^r (b_n \cdot \nabla f)(t,\omega_t)dt \cdot \eta(\omega) \bigg] \quad (m \rightarrow \infty).
$$

(c)
\begin{align*}
\mathbb E_x\bigg|\int_0^r \big((b-b_n)\cdot \nabla f\big)(t,\omega_t)dt \cdot \eta(\omega) \bigg| & \leq C\|\eta\|_\infty\|(b-b_n)|\nabla f|^{\frac{q}{2}}\|_{L^2([0,r],L^2)} \\
& \rightarrow 0 \quad (n \rightarrow \infty).
\end{align*}
The proof of the inequality in (c) follows closely the proof (a) (using Corollary \ref{cor1} passing to the limit in $m$ and then in $k$ appealing to Fatou's Lemma).

\medskip

Now, having at hand (a)-(c), we complete the proof.
We have
\begin{equation}
\label{e_m}
\tag{$\ast\ast$}
\mathbb E^m_x[M_r^{f,m} \eta]=\mathbb E^m_x[M_s^{f,m} \eta], \quad m=1,2,\dots,
\end{equation}
where
$$
M^{f,m}_r:=f(\omega_r)-f(x) + \int_0^r (-\Delta f + b_m \cdot \nabla f)(t,\omega_t)dt,
$$
so it remains to pass to the limit in $m$ in \eqref{e_m}. The assertions (a)-(c) yield
\begin{equation*}
\mathbb E^m_x[\int_0^r (b_m \cdot \nabla f)(t,\omega_t)dt\,\eta(\omega)] \rightarrow \mathbb E_x[\int_0^r (b \cdot \nabla f)(t,\omega_t)dt \, \eta(\omega)],
\end{equation*}
while
 $$\mathbb E^m_x[f(\omega_r)\eta(\omega)] \rightarrow \mathbb E_x[f(\omega_r)\eta(\omega)], $$
and
$$\mathbb E^m_x[\int_0^r (-\Delta f)(\omega_t)\eta(\omega) dt] \rightarrow \mathbb E_x[\int_0^r (-\Delta f)(\omega_t)\eta(\omega)dt] \quad (m \rightarrow \infty),$$
follow from \eqref{conv_p_m}.
\end{proof}

Thus, $\mathbb P_x$ is a $\mathcal B_t$-martingale solution to \eqref{seq}. The rest repeats the end of the first proof.

\bigskip

\section{Proof of Theorem \ref{thm1}(\textit{iii})}

The first statement is immediate from assertion (\textit{i}): given two bounded smooth approximations $\{b_m\}$, $\{b_m'\}$ of $b$, by (\textit{i}) their combination $\{b_1,b_1',b_2,b_2,',\dots\}$ will 
produce a new Feller evolution family that, in turn, must coincide with the Feller evolution families produced by $\{b_m\}$ and $\{b'_m\}$.

Let us prove the second statement. We have $\mathbb P^1_x$, $\mathbb P^2_x$, two martingale solutions to \eqref{seq} that satisfy
\begin{equation}
\label{kr_est1}
\mathbb E_{x}^i\int_0^T |h(t,\omega_t)| dt \leq c\|h\|_{L^q([0,T] \times \mathbb R^d)}
\end{equation}
and
\begin{align}
\label{kr_est2}
\mathbb E_{x}^i\int_0^T |b(r,\omega_t)h(t,\omega_t)|dt  \leq c\|b|h|^{\frac{q}{2}}\|^{\frac{2}{q}}_{L^2([0,T] \times \mathbb R^d)}, \quad  h \in C([0,T], \mathcal S)
\end{align}
with constant $c$ independent of $h$  ($i=1,2$).
Here and below, $\mathbb E_{x}^1:=\mathbb E_{\mathbb P_x^1}$, $\mathbb E_{x}^2:=\mathbb E_{\mathbb P_x^2}$.
 Let us show that for every $F \in C^\infty_c(]0,T[ \times \mathbb R^d)$ we have
\begin{equation}
\label{id4}
\mathbb E_x^1[\int_0^T F(t,\omega_t)dt]=\mathbb E_x^2[\int_0^T F(t,\omega_t)dt],
\end{equation}
which will then imply $\mathbb P_x^1=\mathbb P_x^2$.

Let $u_n$ be the solution to 
\begin{equation}
\label{eq_F}
\partial_t u_n -\Delta u_n + b_n \cdot \nabla u_n=F, \quad u_n(T,\cdot)=0,
\end{equation}
where $b_n$ are as in Section \ref{rem_b_m}, i.e.\,
$$
b_n=c_n e^{\epsilon_n \Delta}\mathbf{1}_n b,
$$
where $c_n \uparrow 1$, $\mathbf{1}_n$ is the indicator of $\{(t,x) \in [0,T] \times \mathbb R^d \mid |n| \leq n, |x| \leq n, |b(t,x)| \leq n\}$ and, given any $d \leq p_1 < \infty$, we can select $\epsilon_n \downarrow 0$ sufficiently rapidly so that 
\begin{equation}
\label{bbn_conv}
\|\mathbf{1}_n b - b_n\|_{L^{p_1}([0,T],L^{p_1})} \rightarrow 0 \quad n \rightarrow \infty.
\end{equation}
Set $\tau_R:=\inf\{t \geq 0 \mid |w_t| \geq R\}$, $R>0$.
Since $u_n$ is smooth, we can apply It\^{o}'s formula, obtaining ($i=1,2$)
\begin{align}
\mathbb E_x^i u_n(T \wedge \tau_R,\omega_{T \wedge \tau_R})  & = u_n(0,x)+\mathbb E_x^i \int_0^{T \wedge \tau_R} F(t,\omega_t)dt \notag \\
& +  \mathbb E_x^i \int_0^{T \wedge \tau_R} \big[(b-b_n)\cdot \nabla u_n\big](t,\omega_t)dt. \label{e_i}
\end{align}
We represent
\begin{align*}
\label{bbn}
\mathbb E_x^i \int_0^{T \wedge \tau_R} \big[(b-b_n)\cdot \nabla u_n\big](t,\omega_t)dt & = \mathbb E_x^i \int_0^{T \wedge \tau_R} \big[(b-\mathbf{1}_n b)\cdot \nabla u_n\big](t,\omega_t)dt \\
& + \mathbb E_x^i \int_0^{T \wedge \tau_R} \big[(\mathbf{1}_nb-b_n)\cdot \nabla u_n\big](t,\omega_t)dt \\
& =: I^1_n + I_n^2.
\end{align*}

1. Let us estimate $I_n^1$.
We have
\begin{align*}
I_n^1 & \leq \mathbb E_x^i \int_0^{T \wedge \tau_R} \big[|b|(1-\mathbf{1}_n) |\nabla u_n|\big](t,\omega_t)dt \\
& (\text{we are applying \eqref{kr_est2}}) \\
& \leq c\|\mathbf{1}_{B(0,R)}|b|(1-\mathbf{1}_n)|\nabla u_n|^{\frac{q}{2}}\|^{\frac{2}{q}}_{L^2([0,T] \times \mathbb R^d)}.
\end{align*}
In turn, for a $0<\theta<1$, we have
\begin{align*}
&\|\mathbf{1}_{B(0,R)}|b|(1-\mathbf{1}_n)|\nabla u_n|^{\frac{q}{2}}\|_{L^2([0,T] \times \mathbb R^d)} \\
&\leq \|\mathbf{1}_{B(0,R)}|b|(1-\mathbf{1}_n)\|^{\theta}_{L^2([0,T] \times \mathbb R^d)} \bigl(2\|\mathbf{1}_{B(0,R)}|b||\nabla u_n|^{\frac{q}{2(1-\theta)}}\|\bigr)_{L^2([0,T] \times \mathbb R^d)}^{1-\theta}.
\end{align*}
The first multiple $\|\mathbf{1}_{B(0,R)}|b|(1-\mathbf{1}_n)\|^{\theta}_{L^2([0,T] \times \mathbb R^d)} \rightarrow 0$ as $n \rightarrow \infty$.
The second multiple is uniformly (in $n$) bounded: by $b \in \mathbf{F}_\delta$,
\begin{align*}
& \||b||\nabla u_n|^{\frac{q}{2(1-\theta)}}\|^2_{L^2([0,T] \times \mathbb R^d)} \\
& \leq \delta \int_0^T \langle |\nabla |\nabla u_n|^{\frac{q}{2(1-\theta)}}|^2 \rangle dt +  \int_0^T g(t) \langle |\nabla u_n|^\frac{q}{1-\theta}\rangle dt ,
\end{align*}
where the RHS is uniformly bounded in view of Corollary \ref{prop_F}, provided that $\theta$ is chosen sufficiently close to $0$ so that $\frac{q}{1-\theta} \in ]d,\delta^{-\frac{1}{2}}[$. Thus, $I_n^1 \rightarrow 0$ as $n \rightarrow \infty$.

2.~By \eqref{kr_est1},
\begin{align*}
I_n^2 & \leq \mathbb E_x^i \int_0^{T \wedge \tau_R} \big[|\mathbf{1}_nb-b_n||\nabla u_n|\big](s,\omega_s)ds \\
& \leq c\|\mathbf{1}_{B(0,R)}|\mathbf{1}_nb-b_n|^\frac{q}{2}|\nabla u_n|^{\frac{q}{2}}\|^{\frac{2}{q}}_{L^2([0,T] \times \mathbb R^d)},
\end{align*}
where, for every $0<\theta<1$,
\begin{align*}
& \|\mathbf{1}_{B(0,R)}|\mathbf{1}_nb-b_n|^\frac{q}{2}|\nabla u_n|^{\frac{q}{2}}\|_{L^2([0,T] \times \mathbb R^d)} \\
& \leq \|\mathbf{1}_{B(0,R)}|\mathbf{1}_nb-b_n|^{\frac{q}{2\theta}}\|^\theta_{L^2([0,T] \times \mathbb R^d)}\||\nabla u_n|^{\frac{q}{2(1-\theta)}}\|^{1-\theta}_{L^2([0,T] \times \mathbb R^d)}.
\end{align*}
The second multiple is uniformly (in $n$) bounded in view of Corollary \ref{prop_F}, provided that $\theta$ is chosen sufficiently close to $0$ so that $\frac{q}{1-\theta} \in ]d,\delta^{-\frac{1}{2}}[$. The first multiple tends to $0$ as $n \rightarrow \infty$ in view of \eqref{bbn_conv} with $p_1=\frac{q}{\theta}$.

\medskip

Combining 1 and 2, we arrive at
\begin{equation}
\label{e_conv}
\left|\mathbb E_x^i \int_0^{T \wedge \tau_R} \big[(b-b_n)\cdot \nabla u_n\big](t,\omega_t)dt \right| \rightarrow 0 \quad \text{ as } n \rightarrow \infty.
\end{equation}

Now, we will need

\begin{lemma}
\label{lem_9}
$u_n$ converge uniformly on $[0,T] \times \mathbb R^d$ to a $u \in C([0,T] \times \mathbb R^d)$.
\end{lemma}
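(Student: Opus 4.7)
I would prove the lemma by representing $u_n$ via the backward Feller evolution family associated with $b_n$, and then using the convergence supplied by Theorem \ref{thm1}(\textit{i}) together with the compactness of the curve $r\mapsto F(r,\cdot)$ in $C_\infty$.

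\textbf{Step 1 (Duhamel representation).} Because each $b_n$ is smooth and bounded, classical theory provides the Duhamel formula
\[
u_n(t,\cdot) = -\int_t^T P_n^{t,r} F(r,\cdot)\, dr, \qquad 0 \le t \le T,
\]
where $P_n^{t,r} := P^{t,r}(b_n)$ is the backward Feller evolution family of Definition \ref{def_fel2}; equivalently, by It\^{o}'s formula applied to $u_n(s,X^n_{s,t,x})$,
$u_n(t,x) = -\mathbb E\bigl[\int_t^T F(r, X^n_{r,t,x})\,dr\bigr]$.
Each $b_n$ has form-bound at most $\delta$ with the common function $g_\delta$, so by the time-reversal identity $P^{t,r}(b_n) = U^{T-t,T-r}(\tilde b_n)$ with $\tilde b_n(t,x)=b_n(T-t,x)$, the convergence result in Theorem \ref{thm1}(\textit{i}) (applied to the time-reversed drifts, which are again form-bounded with the same constants) yields a backward Feller evolution family $P^{t,r}=P^{t,r}(b)$ with
\[
s\mbox{-}C_\infty\mbox{-}\lim_{n \to \infty} P_n^{t,r} = P^{t,r} \quad \text{locally uniformly in } (t,r) \in D_T.
\]
I then define the candidate limit $u(t,\cdot) := -\int_t^T P^{t,r} F(r,\cdot)\, dr$, which belongs to $C_\infty$ for each $t \in [0,T]$.

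\textbf{Step 2 (Upgrading to uniform convergence).} The key step is to promote the pointwise strong convergence $P_n^{t,r}\to P^{t,r}$ to a bound that is uniform in $(t,r)\in D_T$ while the argument $F(r,\cdot)$ itself depends on $r$. Since $F\in C_c^\infty(\,]0,T[\,\times\mathbb R^d)$, the map $[0,T]\ni r\mapsto F(r,\cdot)\in C_\infty$ is continuous, so its image $K:=\{F(r,\cdot):r\in[0,T]\}$ is a norm-compact subset of $C_\infty$. A standard $\varepsilon$-net argument, using a finite cover $\{F(r_i,\cdot)\}_{i=1}^N$ of $K$, the strong convergence $\|(P_n^{t,r}-P^{t,r})F(r_i,\cdot)\|_\infty\to 0$ uniformly in $(t,r)\in D_T$, and the contractivity $\|P_n^{t,r}\|_{C_\infty\to C_\infty},\|P^{t,r}\|_{C_\infty\to C_\infty}\le 1$, gives
\[
\sup_{(t,r)\in D_T}\bigl\|(P_n^{t,r}-P^{t,r})F(r,\cdot)\bigr\|_\infty \xrightarrow[n\to\infty]{} 0.
\]
Integrating in $r$ yields
\[
\|u_n - u\|_{L^\infty([0,T]\times\mathbb R^d)} \le T\sup_{(t,r)\in D_T}\bigl\|(P_n^{t,r}-P^{t,r})F(r,\cdot)\bigr\|_\infty \longrightarrow 0,
\]
so $u_n\to u$ uniformly on $[0,T]\times\mathbb R^d$. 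Because each $u_n$ is smooth, hence continuous, the uniform limit $u$ belongs to $C([0,T]\times\mathbb R^d)$.

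\textbf{Main obstacle.} The principal technical point is exactly the uniform bound above: the operators $P_n^{t,r}$ converge only \emph{strongly}, yet one needs convergence of $P_n^{t,r}F(r,\cdot)$ uniform in $(t,r)$ with the test vector $F(r,\cdot)$ also moving. Norm compactness of $K\subset C_\infty$ coupled with the contractivity of the propagators resolves this by a routine equicontinuity argument. A secondary bookkeeping issue is the passage between the backward family $P^{t,r}(b_n)$ of Definition \ref{def_fel2} and the forward family $U^{t,s}(\tilde b_n)$ for which Theorem \ref{thm1}(\textit{i}) is stated; this is handled directly by the time-reversal identity $P^{t,r}(b)=U^{T-t,T-r}(\tilde b)$ noted after Definition \ref{def_fel2}.
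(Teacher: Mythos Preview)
Your proposal is correct and follows precisely the route the paper names first: it says Lemma~\ref{lem_9} ``follows from the convergence result in Theorem~\ref{thm1}(\textit{i}) and the Duhamel principle,'' which is exactly your argument (with the compactness of $\{F(r,\cdot):r\in[0,T]\}\subset C_\infty$ supplying the uniformity in~$r$). The paper also sketches an alternative proof by rerunning the $L^{p_0}\to L^\infty$ iteration procedure of Section~\ref{proof_i_sect} directly on equation~\eqref{eq_F}, but this is offered only as a second route.
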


Lemma \ref{lem_9} follows from the convergence result in Theorem \ref{thm1}(\textit{i}) and the Duhamel principle. Alternatively, one can carry out the $L^{p_0} \rightarrow L^\infty$ iteration procedure used in the proof of Theorem \ref{thm1}(\textit{i}) but for equation \eqref{eq_F}. Indeed, taking the difference between solutions $u_m$, $u_n$ to equation \eqref{eq_F} with $b=b_m$, $b=b_n$, respectively, we arrive at the same equation \eqref{eq_h}, and hence to the same iteration inequality \eqref{iterineq}. To estimate the factor containing $\nabla u_m$, we appeal to Theorem \ref{prop2_} rather than to \cite[Lemma 1]{Ki}. We arrive at \eqref{iterineq0}, which yields the required uniform convergence once we establish the convergence of $u_n$ in $L^2$. The latter follows easily by modifying the proof of \cite[Lemma 4]{Ki}. 

\smallskip

Thus, using $u_n(T,\cdot)=0$ and applying \eqref{e_conv} and Lemma \ref{lem_9} in \eqref{e_i}, we pass to the limits $n \rightarrow \infty$ and then $R \rightarrow \infty$ to obtain
$$
0 =u(0,x)+\mathbb E_x^i \int_0^{T} F(t,\omega_t)dt \quad i=1,2,
$$
which yields \eqref{id4}. The proof of Theorem \ref{thm1}(\textit{iii}) is completed.


\begin{thebibliography}{99}



\bibitem{BC} R.~Bass and Z.-Q.~Chen, \newblock Brownian motion with singular drift.
\newblock {\em Ann.~Prob.}, 31 (2003), 791-817.


\bibitem{BFGM} L.~Beck, F.~Flandoli, M.~Gubinelli and M.~Maurelli, 
\newblock Stochastic ODEs and stochastic linear PDEs with critical
drift: regularity, duality and uniqueness. 
\newblock{\em Electron. J. Probab.}, \textbf{24} (2019),  Paper No. 136, 72 pp.



\bibitem{CWW} S.Y.A.\;Chang, J.M.\;Wilson and T.H.\;Wolff, \newblock Some weighted norm inequalities concerning
the Schr\"{o}dinger operator, \newblock {\it Comment.\;Math.\;Helvetici}, \textbf{60} (1985), 217-246.

\bibitem{DK} H.\,Dong, N.\,V.\,Krylov, Aleksandrov's estimates for elliptic equations with drift in Morrey spaces containing $L^d$, {\em Preprint}, arXiv:2103.03955 (2021).


\bibitem{GvC} A.\,Gulisashvili and J.A.\,van Casteren, Non-autonomous Kato Classes and Feynman-Kac Propagators, \textit{World Scientific}, 2006.




\bibitem{Ki} D.\,Kinzebulatov,
\newblock Feller evolution families and parabolic equations with form-bounded vector fields,
\newblock \textit{Osaka J.\,Math.}, \textbf{54} (2017), 499-516.


\bibitem{Ki9} D.\,Kinzebulatov, Regularity theory of Kolmogorov operator revisited, {\em Canadian Math.\,Bull.}, to appear (arXiv:1807.07597).



\bibitem{KiS} D.\,Kinzebulatov and Yu.A.\,Sem\"{e}nov, Brownian motion with general drift, \newblock{\em Stoch. Proc. Appl.}, \textbf{130} (2020), 2737-2750.

\bibitem{KiS2} D.\,Kinzebulatov and Yu.\,A.\,Sem\"{e}nov, Feller generators and stochastic differential equations with singular (form-bounded) drift,  \newblock{\em Osaka J.\,Math.}, 58 (2021), 855-883.


\bibitem{KiS4} D.\,Kinzebulatov and Yu.\,A.\,Sem\"{e}nov, On the theory of the Kolmogorov operator in the spaces $L^p$ and $C_\infty$, {\em Ann. Sc. Norm. Sup. Pisa (5)} \textbf{21} (2020), 1573-1647.


\bibitem{KiS5} D.\,Kinzebulatov and Yu.\,A.\,Sem\"{e}nov, Heat kernel bounds for parabolic equations with singular (form-bounded) vector fields, {\em Preprint}, arXiv:2103.11482 (2021).


\bibitem{KSS} D.\,Kinzebulatov, Yu.\,A.\,Sem\"{e}nov and R.\,Song, Stochastic transport equation with singular drift, {\em Preprint}, arXiv:2102.10610 (2021).




\bibitem{KPS}
V.\,F.\,Kovalenko, M.\,A.\,Perelmuter and Yu.\,A.\,Sem\"enov,
\newblock {Schr\"{o}dinger operators with ${L\sp{1/2}\sb{W}}( R\sp{l})$-potentials},
\newblock {\em J. Math. Phys.}, \textbf{22} (1981), 1033-1044.


\bibitem{KS} V.\,F.\,Kovalenko and Yu.\,A.\,Sem\"{e}nov,
{\newblock $C_0$-semigroups in $L^p(\mathbb R^d)$ and $C_\infty(\mathbb R^d)$ spaces generated by differential expression $\Delta+b\cdot\nabla$.} 
(Russian) {\em Teor. Veroyatnost. i Primenen.}, \textbf{35} (1990), 449-458; translation in {\em Theory Probab. Appl.} \textbf{35} (1990), 443-453.



\bibitem{Kr0} N.\,V.\,Krylov, On strong solutions of It\^{o}'s equations with $A \in W^{1,d}$ and 
 $b \in L^d$, {\em Preprint}, 
arXiv:2007.06040 (2020).

\bibitem{Kr1_} N.\,V.\,Krylov, On time inhomogeneous stochastic It\^{o} equations with drift in $L_{d+1}$, {\em Preprint}, arXiv:2005.08831 (2020).


\bibitem{Kr1} N.\,V.\,Krylov, On diffusion processes with drift in $L_d$, {\em Preprint}, arXiv:2001.04950 (2020).


\bibitem{Kr2} N.\,V.\,Krylov, On stochastic equations with drift in $L^d$, {\em Preprint}, arXiv:2001.04008 (2020).

\bibitem{Kr3} N.\,V.\,Krylov,
 On stochastic It\^{o} processes with drift in $L^d$. {\em Preprint}, arXiv:2001.03660 (2020).

\bibitem{Kr5} N.\,V.\,Krylov, On diffusion processes with drift in $L^{d+1}$, {\em Preprint}, arXiv:2102.11465 (2021).


\bibitem{Kr5_} N.\,V.\,Krylov, On time inhomogeneous stochastic It\^{o} equations with drift in $L^{d+1}$,
{\rm Preprint}, arXiv:2005.08831 (2021).

\bibitem{Kr6} N.\,V.\,Krylov, On the heat equation with drift in $L^{d+1}$, {\em Preprint}, arXiv:2101.00119 (2021).

\bibitem{Kr4} N.\,V.\,Krylov, On diffusion processes with drift in a Morrey class containing $L^{d+2}$.
 {\em Preprint}, arXiv:2104.05603 (2021).


\bibitem{KR} N.\,V.\,Krylov and M.\,R\"{o}ckner. 
\newblock Strong solutions of stochastic equations with singular time dependent drift. 
\newblock {\em Probab. Theory Related Fields}, 131 (2005), 154-196.






\bibitem{P} N.\,I.\,Portenko, \newblock Generalized Diffusion Processes. \newblock {\em AMS}, 1990.


\bibitem{RZ} M.\,R\"{o}ckner and G.\,Zhao, {SDEs with critical time dependent drifts: weak solutions}, \textit{Preprint}, arXiv:2012.04161 (2020).


\bibitem{RZ2} M.\,R\"{o}ckner and G.\,Zhao, {SDEs with critical time dependent drifts: strong solutions}, \textit{Preprint}, arXiv:2103.05803 (2021).




\bibitem{Se} Yu.\,A.\,Sem\"{e}nov, \newblock Regularity theorems for parabolic equations, \newblock {\em J.\,Funct.\,Anal.}, \textbf{231} (2006), 375-417.


\bibitem{W} R.\,J.\,Williams, Brownian motion with polar drift, {\em Trans.\,Amer.\,Math.\,Soc.}, \textbf{292} (1985), 225-246.

\bibitem{XXZZ} P.\,Xia, L.\,Xie, X.\,Zhang and G.\,Zhao, $L^q(L^p)$-theory of stochastic differential equations, {\em Stoc.\,Proc.\,Appl.} \textbf{130} (2020), 5188-5211.



\bibitem{Z0} X.\,Zhang, Strong solutions of SDEs with singular drift and Sobolev diffusion
coefficients, {\em Stoch.\,Proc. Appl.}, \textbf{115}(11) (2005), 1805-1818.

\bibitem{Z} X.\,Zhang, Stochastic homeomorphism flows of SDEs with singular drifts and Sobolev diffusion coefficients, \newblock {\em Electr.\,J.\,Prob.}, \textbf{16} (2011), p.\,1096-1116.


\bibitem{Z2} X.\,Zhang, Stochastic differential equations with Sobolev diffusion and singular
drift and applications, {\em Ann. Appl. Probab.}, \textbf{26}(5) (2016), p.\,2697-2732.


\bibitem{ZZ} X.\,Zhang and G.\,Zhao, Stochastic Lagrangian path for Leray solutions of $3D$ Naiver-Stokes equations, {\em Preprint}, arXiv:1904.04387.

\bibitem{Zh} G.\,Zhao, Stochastic Lagrangian flows for SDEs with rough coefficients, {\em Preprint}, arXiv:1911.05562.


\end{thebibliography}
\end{document}